
\documentclass{amsart}
\usepackage{tikz}


\usepackage{amsmath,amsfonts,amsthm,amssymb,amscd, verbatim, graphicx,textcomp, hyperref}
\usepackage[T1]{fontenc}
\usepackage[utf8]{inputenc}
\usepackage[margin=1.5in]{geometry}
\usepackage{graphics,graphicx}
\usepackage{pstricks,pst-node,pst-tree}

\usepackage{pinlabel}
\usepackage{lscape}
\usepackage{latexsym}
\usepackage{multirow}




\usepackage{booktabs}


\newtheorem{Thm}{Theorem}[section]
\newtheorem{Cor}[Thm]{Corollary}
\newtheorem{Conj}[Thm]{Conjecture}
\newtheorem{Prop}[Thm]{Proposition}

\newtheorem{Lem}[Thm]{Lemma}
\newtheorem{Rem}[Thm]{Remark}
\newtheorem{Prob}[Thm]{Problem}
\newtheorem{Const}[Thm]{Construction}




\theoremstyle{definition}

\newtheorem{Ex}[Thm]{Example}




\theoremstyle{remark}


\numberwithin{equation}{section}

\newcommand{\Aut}{\operatorname{Aut}}

\renewcommand{\dim}{\operatorname{dim}}

\newcommand{\ra}[1]{\renewcommand{\arraystretch}{#1}}

\newcommand{\De}{\mathcal{D}}

\newcommand{\Sym}{\operatorname{Sym}}

\newcommand{\supp}{\operatorname{supp}}

\newcommand{\Sp}{\operatorname{Sp}}

\newcommand{\R}{\mathbf{R}}
\newcommand{\Z}{\mathbf{Z}}

\renewcommand{\Gamma}{\varGamma}
\renewcommand{\epsilon}{\varepsilon}

\renewcommand{\hat}{\widehat}
\renewcommand{\leq}{\leqslant}
\renewcommand{\geq}{\geqslant}

\renewcommand{\S}{\mathbf{S} }
\newcommand{\B}{\mathcal{B} }

\renewcommand{\B}{\mathcal{B}}

\newcommand{\N}{\mathcal{N}}

\newcommand{\mF}{\mathbb{F}}

\newcommand{\G}{\mathcal{G}}

\newcommand{\U}{\mathcal{U}}
\renewcommand{\P}{\mathcal{P}}

\newcommand{\E}{\mathcal{E}}

\newcommand{\fix}{\operatorname{fix}}

\newcommand{\C}{\mathcal{C}}



\DeclareMathOperator{\Inc}{Inc} 


\sloppy

\begin{document}


\title[Equiangular lines, Incoherent Sets and Quasi-symmetric designs]{Equiangular lines, Incoherent Sets and Quasi-symmetric designs}


\author{Neil I. Gillespie}
\address{Heilbronn Institute for Mathematical Research, Department of Mathematics, University of Bristol, U.K.}
\email{neil.gillespie@bristol.ac.uk}



\begin{abstract} The absolute upper bound on the number of equiangular lines that can be found in $\R^d$ is
$d(d+1)/2$. Examples of sets of lines that saturate this bound are only known to exist in dimensions $d=2,3,7$ or $23$.
By considering the additional property of \emph{incoherence},  we prove that there exists a set of equiangular lines that saturates the 
absolute bound and the incoherence bound if and only if $d=2,3,7$ or $23$.
This allows us classify all \emph{tight spherical $5$-designs} $X$ in $\S^{d-1}$, the unit sphere, with the property
that there exists a set of $d$ points in $X$ whose pairwise inner products are positive. 

For a given angle $\kappa$, there exists a relative upper bound on the number of equiangular lines in $\R^d$ with common angle $\kappa$. 
We prove that classifying sets of lines that saturate this bound along with the incoherence bound is equivalent to classifying 
certain \emph{quasi-symmetric} designs, which are combinatorial designs with two block intersection numbers. 
Given a further natural assumption, we classify the known sets of lines that saturate these two bounds. This family comprises of the lines mentioned above
and the maximal set of $16$ equiangular lines found in $\R^6$.  There are infinitely many known sets of lines that saturate the relative bound,
so this result is surprising. To shed some light on this, we identify the $E_8$ lattice with the projection onto an $8$-dimensional subspace of a sublattice of the 
Leech lattice defined by $276$ equiangular lines in $\R^{23}$. This identification leads us to observe a 
correspondence between sets of equiangular lines in small dimensions and the exceptional curves of del Pezzo surfaces.
\end{abstract}

\keywords{Equiangular lines, spherical designs, quasi-symmetric designs, two-graphs, classification, $E_8$ lattice}

\subjclass[2010]{}

\maketitle

\section{Introduction}\label{sec:intro}
A set of lines in $\R^d$ is \emph{equiangular} if the angle between all pairs of lines is the same. In particular, given $n$ lines in $\R^d$, choose a unit vector
that spans each line. Then the set of lines is equiangular if the inner product between all unit vectors is $\pm \kappa$ where $\kappa$ is the cosine of the angle 
between them. A classical problem in Euclidean geometry is to determine the maximum
number $M(d)$ of equiangular lines in $\R^d$ for a given $d$. This problem seems to date back to Haantjes study in \cite{haantjes}, and was subsequently investigated
by van Lint and Seidel \cite{vanlintseid} and Lemmens and Seidel \cite{lemmens}. A connection with algebraic graph theory in \cite{vanlintseid} led
to significant work on equiangular lines and related problems, see for example \cite{bussseid, camcoh, mckayspence, seidelsurvey,taylor,tay2trans}.
Recent improvements have been made on various known upper and lower bounds for $M(d)$, which has led to renewed interest, see for example \cite{yu3,caen,yu1,greaves1, greaves2,yu4,yu2}; 
for the asymptotic case given a fixed angle, see \cite{keevash,zilin}.

Of the upper bounds mentioned above the best known is \emph{the absolute bound}. 
Gerzon proved that for any $d$, 
$$M(d)\leq \frac{d(d+1)}{2}.$$
However, this bound is only known to be a saturated when $d=2,3,7$ and $23$, and it is an open question
if it is saturated for any other values of $d$.

Associated with sets of equiangular lines is the notion of \emph{incoherence}. We say a set $\Gamma$ of equiangular lines is
an \emph{incoherent set} if either $|\Gamma|\leq 2$, or for any $3$-set of lines in $\Gamma$,
$$(\alpha_1,\alpha_2)(\alpha_1,\alpha_3)(\alpha_2,\alpha_3)>0$$
Here $\alpha_i$ is a unit vector that represents the corresponding line and $(\cdot,\cdot)$ denotes the standard Euclidean inner product. 
For any set $\Omega$ of equiangular lines in $\R^d$ it is known that
$$\Inc(\Omega)\leq d,$$
where $\Inc(\Omega)$ is the maximum size of any incoherent subset of lines contained in $\Omega$. 
We call this bound the \emph{incoherence bound}. The known sets of lines that saturate the absolute bound also saturate the incoherence bound.
Our first result proves that this is not a coincidence. 

\begin{Thm}\label{thm:main} Let $\Omega$ be a set of equiangular lines in $\R^d$ that saturates the absolute bound and the incoherence 
bound. Then $d=2,3,7$ or $23$, and $\Omega$ is isometric to
the corresponding set of lines given in Section \ref{sec:examples}. 
\end{Thm}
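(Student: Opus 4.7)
The plan is to exploit two structural conditions. First, saturating the absolute bound is equivalent to $\Omega$, viewed as its $2n$ antipodal unit vectors, forming a tight spherical $5$-design on $\S^{d-1}$, and the standard second-moment identity for such designs forces the common angle to satisfy $\kappa^2 = 1/(d+2)$. Second, unwinding the definition of an incoherent subset of size $d$ shows that, after suitable choices of sign on those lines, there exist unit representatives $\alpha_1,\ldots,\alpha_d$ with all pairwise inner products equal to $+\kappa$; since the Gram matrix $(1-\kappa)I+\kappa J$ is positive definite, these vectors form a basis of $\R^d$.

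For each of the remaining $\binom{d}{2}$ lines $\ell$, I would choose a unit representative $\beta$ and record the sign pattern $\epsilon\in\{\pm 1\}^d$ defined by $(\beta,\alpha_i)=\epsilon_i\kappa$, determined up to the global flip $\beta\leftrightarrow-\beta$. Expanding $\beta=\sum_i c_i\alpha_i$ in the basis, the relations $(\beta,\alpha_j)=\epsilon_j\kappa$ solve for the $c_i$ in closed form, and enforcing $|\beta|^2=1$ collapses to a single scalar identity in $E:=\sum_i\epsilon_i$. After substituting $\kappa^2=1/(d+2)$ this simplifies to
\[
\kappa(E^2 + d - 4) = d - 3,
\]
so $(E^2+d-4)^2 = (d+2)(d-3)^2$. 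Since $E$ is an integer, for $d\neq 3$ this forces $d+2=k^2$ with $k\in\Z_{\geq 2}$, and then $E^2=(k-2)(k^2+k-3)$, independent of $\beta$.

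The main obstacle is the resulting Diophantine question: proving $(k-2)(k^2+k-3)$ is a perfect square only for $k\in\{2,3,5\}$, which is an integer-point problem on the elliptic curve $y^2=(x-2)(x^2+x-3)$. I would resolve it by a $2$-descent together with direct verification at small $k$; as a complementary check, for each excluded $k$ the forced quasi-symmetric $2$-design parameters (see below) can be shown to fail integrality or a Fisher-type inequality. Granting this, $d\in\{2,3,7,23\}$.

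For each admissible $d$ the sign patterns all have the same weight $b=(d-|E|)/2$, and the condition $(\beta,\beta')=\pm\kappa$ restricts the intersection of the minus-sets of any two distinct lines to exactly two values. Hence the $\binom{d}{2}$ sign supports form a quasi-symmetric $2$-design whose parameters are pinned by $d$: the trivial $2$-$(7,2,1)$ design of all pairs for $d=7$, and the Witt design $S(4,7,23)$ (a $2$-$(23,7,21)$ design) for $d=23$. Each of these is unique up to isomorphism, as are the tiny cases $d=2,3$, so $\Omega$ itself is determined up to isometry of $\R^d$, matching the examples of Section~\ref{sec:examples}.
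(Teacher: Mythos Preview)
Your approach is essentially the paper's own: expanding each remaining unit vector in the incoherent basis is exactly Taylor's computation (Propositions~\ref{prop:tayeq} and~\ref{prop:tayint}, recapitulated in Remark~\ref{rem:coeff}), and your Diophantine constraint $E^2=(k-2)(k^2+k-3)$ with $k^2=d+2$ is the same elliptic curve $y^2=x^3-x^2-5x+6$ that the paper reaches after substituting $d=\rho^2-2$ (here $k=\rho$ and $|E|$ is the paper's $z$).

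The genuine gap is in the Diophantine step. Your claim that $(k-2)(k^2+k-3)$ is a perfect square only for $k\in\{2,3,5\}$ is false: $k=29$ gives $27\cdot 867=23409=153^2$, corresponding to $d=839$. The curve has Mordell--Weil rank~$1$ (it is LMFDB 156.a2), so a $2$-descent cannot by itself bound the integer points, and ``direct verification at small $k$'' has no a~priori stopping point; the paper instead cites the full table of thirteen integer points. More seriously, your fallback of checking quasi-symmetric $2$-design integrality or Fisher does not eliminate $d=839$: the $2$-$(839,343,58653;147,133)$ parameters are all integral, and Fisher's inequality holds comfortably. What actually kills $d=839$ is the stronger fact, proved in Theorem~\ref{thm:eqquasi} via Calderbank's inequality (Theorem~\ref{thm:calder}, equation~\eqref{eq:fuse}), that when the \emph{absolute} bound is saturated the design on $\Gamma$ must be a $3$-design; the required $\lambda_3=343\cdot 342\cdot 341/(2\cdot 837)$ is then not an integer. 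This $3$-design step is not visible from the sign-pattern and pairwise-intersection analysis you outline, and without it the case $d=839$ survives.
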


The problem of classifying sets of equiangular lines that saturate the absolute bound has also been studied from the point of view of \emph{spherical designs}.
Let $\S^{d-1}$ be the unit sphere in $\R^d$ for $d\geq 2$. A \emph{spherical $t$-design} is a finite set $X$ of points in $\S^{d-1}$ such that
$$\frac{1}{Vol(\S^{d-1})}\int_{{\bf{x}}\in \S^{d-1}}f({\bf{x}})d\sigma({\bf{x}})=\frac{1}{|X|}\sum_{{\bf{u}}\in X}f({\bf{x}})$$
for any polynomial $f\in\R[x_1,\ldots,x_d]$ of degree at most $t$. It is known that for any spherical $t$-design,
$$|X|\geq\binom{d+e-1}{d-1}+\binom{d+e-2}{d-1},\,\,\,|X|\geq 2\binom{d+e-1}{d-1} $$
for $t=2e$ and $t=2e+1$ respectively \cite{delgoeseid}. 
If either of these bounds is attained, then the spherical $t$-design is called \emph{tight}. 
For $d=2$, the tight spherical $t$-designs are the regular $(t+1)$-gons \cite[Example 5.13]{delgoeseid} and for $d\geq 3$, Bannai and Damerell proved that
a tight spherical $t$-design exists if and only if $t\in\{1,2,3,4,5,7,11\}$ \cite{bandd1,bandd2}. Furthermore, the tight spherical $t$-designs are classified for $t\in\{1,2,3,11\}$, 
and the existence of a tight spherical $4$-design is equivalent to the existence of a tight spherical $5$-design. Thus the open problem of classifying all tight spherical
$t$-deigns is reduced to classifying all tight spherical $5$ and $7$-designs. It is also known that a tight spherical $5$-design exists in $\S^{d-1}$ if and only if there exists 
a set of equiangular lines in $\R^d$ that saturate the absolute upper bound \cite[Theorem 5.12 and Example 8.3]{delgoeseid}. 
Therefore Theorem \ref{thm:main} implies the following classification of a family of tight spherical $5$-designs.



\begin{Thm}\label{thm:main2} Let $X$ be a tight spherical $5$-design in $\S^{d-1}$ that contains a set $\Gamma$ of $d$ points such that
the pairwise inner products of points in $\Gamma$ are positive. Then $d=2,3,7$ or $23$ and $X$ isometric to the intersection of $\S^{d-1}$ with set of equiangular lines in $\R^d$ given in section \ref{sec:examples}.
\end{Thm}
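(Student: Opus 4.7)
The plan is to reduce Theorem~\ref{thm:main2} to Theorem~\ref{thm:main} by passing from the spherical design $X$ to its associated set of lines. The key ingredient is the correspondence of Delsarte--Goethals--Seidel already cited before the statement: a tight spherical $5$-design $X \subset \S^{d-1}$ is necessarily antipodal (i.e.\ $X = -X$) with $|X| = d(d+1)$, and the set $\Omega$ of lines spanned by the vectors in $X$ is equiangular with some common angle $\kappa \in (0,1)$. In particular $|\Omega| = d(d+1)/2$, so $\Omega$ saturates the absolute bound.

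First I would set up $\Omega$ and record these facts, noting that every pair of distinct points of $X$ has inner product $\pm \kappa$. Next I would analyse the given $d$-subset $\Gamma \subset X$. Because the pairwise inner products on $\Gamma$ are assumed strictly positive, each such inner product must equal $+\kappa$ rather than $-\kappa$. In particular no two points of $\Gamma$ are antipodal, so the $d$ vectors in $\Gamma$ span $d$ pairwise distinct lines $\Gamma' \subset \Omega$. For any three lines in $\Gamma'$, represented by their unit vectors $\alpha_1,\alpha_2,\alpha_3 \in \Gamma$, one has
\[
(\alpha_1,\alpha_2)(\alpha_1,\alpha_3)(\alpha_2,\alpha_3) = \kappa^3 > 0,
\]
so $\Gamma'$ is incoherent. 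Therefore $\Inc(\Omega) \geq d$, and combined with the incoherence bound $\Inc(\Omega) \leq d$ this gives equality, showing that $\Omega$ saturates the incoherence bound as well.

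At this point Theorem~\ref{thm:main} applies directly to $\Omega$, forcing $d \in \{2,3,7,23\}$ and identifying $\Omega$, up to isometry, with the corresponding example in Section~\ref{sec:examples}. To conclude, I would observe that $X$ is recovered from $\Omega$ as the set of unit vectors on the lines of $\Omega$: indeed $X \subseteq \bigcup_{\ell \in \Omega}(\ell \cap \S^{d-1})$, and since $X$ is antipodal with $|X| = 2|\Omega|$, this containment is an equality. Hence $X$ is isometric to $\S^{d-1} \cap \Omega$ as required.

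I do not anticipate any significant obstacle; essentially all the work is done by Theorem~\ref{thm:main} and the Delsarte--Goethals--Seidel correspondence. The only point requiring even minor care is checking that $\Gamma$ yields $d$ genuinely distinct lines, which follows cleanly from the positivity of pairwise inner products ruling out antipodal pairs.
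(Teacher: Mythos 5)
Your proposal is correct and follows essentially the same route as the paper: invoke Delsarte--Goethals--Seidel to see that $X$ is antipodal and defines a set $\Omega$ of $d(d+1)/2$ equiangular lines, observe that the positive pairwise inner products on $\Gamma$ make the corresponding $d$ lines an incoherent set of size $d$, and then apply Theorem \ref{thm:main}. The extra details you supply (distinctness of the lines, the triple-product check, and recovering $X$ from $\Omega$ by antipodality) are just explicit versions of steps the paper leaves implicit.
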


Returning to sets of equiangular lines in $\R^d$, it is known that for a given angle $\kappa$,
$$M(d,\kappa)\leq \frac{d-\kappa^2 d}{1-\kappa^2 d}$$
assuming that $\kappa^{2}d<1$, where $M(d,\kappa)$ is the maximum number of equiangular lines in $\R^d$ with angle $\kappa$. 
This is known as the \emph{relative upper bound}. As we have classified the sets of lines that saturate the absolute bound and the incoherence bound,
it is natural to try to do the same for sets of lines that saturate the relative bound. We prove the following structural result with regards to this question.

\begin{Thm}\label{thm:maineqquasi} Let $\Omega$ be a set of equiangular lines in $\R^d$ that saturates the relative bound, and let $\rho=\kappa^{-1}$. Then
$\Inc(\Omega)=d$ if and only if either
\begin{itemize}
\item[i)] $\Omega$ is the maximal set of $3$ equiangular lines in $\R^2$;
\item[ii)] $\Omega$ is the maximal set of $6$ equiangular lines in $\R^3$;
\item[iii)] $d=\rho(\rho-1)$, $|\Omega|=(\rho^2-1)(\rho-1)$, and there exists a 
quasi-symmetric 
$$2-(2i(2i+1)-1,(2i-1)(i+1),i(2i^2+i-2);i^2+i-1,i^2-1)$$
design, where $\rho=2i+1$ for an integer $i\geq 1$; or
\item[iv)] there exists a quasi-symmetric $2$-$(d,k,\lambda;s_1,s_2)$ design, where,
$k$ is a root of the quadratic
$$4x^2-4dx+(\rho-1)^2(d+\rho)$$
and 
$$\lambda=\frac{k(k-1)}{\rho^2-d},\,\,\,s_1=k-\frac{(\rho-1)^2}{4},\,\,\,s_2=k-\frac{(\rho^2-1)}{4}.$$
\end{itemize}
\end{Thm}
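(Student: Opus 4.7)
The plan is to set up a correspondence between the lines of $\Omega$ lying outside an incoherent basis and the blocks of a combinatorial design, where the block sizes and pairwise intersections are forced by the rigid inner-product structure of an equiangular system.

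First, since $\Inc(\Omega)=d$, I would fix an incoherent $d$-set and choose unit representatives $\alpha_1,\ldots,\alpha_d$ with $(\alpha_i,\alpha_j)=+\kappa$ for all $i\ne j$; the positive sign can be arranged by a switching argument because the incoherence condition is insensitive to choice of unit vector. Since $\kappa^2 d<1$, the Gram matrix $(1-\kappa)I+\kappa J$ is invertible, so the $\alpha_i$ form a basis of $\R^d$. For each remaining line $\ell$ with unit representative $\beta_\ell$, the signs $\epsilon_i(\ell):=\kappa^{-1}(\beta_\ell,\alpha_i)\in\{\pm 1\}$ assign a subset $B_\ell:=\{i:\epsilon_i(\ell)=-1\}\subseteq\{1,\ldots,d\}$, well-defined only up to complementation since $\beta_\ell$ and $-\beta_\ell$ represent the same line.

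Next I would extract the block structure. Expanding $\beta_\ell$ in the $\{\alpha_i\}$-basis and imposing $\|\beta_\ell\|^2=1$ yields a single quadratic identity in $E_\ell:=\sum_i\epsilon_i(\ell)$, which rearranges to $E_\ell^2=d^2-(\rho-1)^2(d+\rho)$; hence $|B_\ell|=(d-E_\ell)/2$ takes the two values $k$ and $d-k$ given by the roots of the stated quadratic $4x^2-4dx+(\rho-1)^2(d+\rho)=0$. The analogous expansion of $(\beta_\ell,\beta_{\ell'})=\pm\kappa$ for two distinct non-basis lines produces a linear relation between $E_\ell,E_{\ell'}$ and $|B_\ell\cap B_{\ell'}|$, forcing exactly two intersection cardinalities $s_1,s_2$ matching the formulas in the theorem. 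A standard double count of incidences $(\ell,\{i,j\})$ with $\{i,j\}\subseteq B_\ell$, combined with the relative-bound identity $|\Omega|=d(\rho^2-1)/(\rho^2-d)$, forces the pair-coverage $\lambda$ to equal $k(k-1)/(\rho^2-d)$; this establishes the $2$-design property and yields case (iv) in the generic regime.

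The remaining cases arise from degeneracies. Cases (i) and (ii) are the small-dimensional settings $d=2$ and $d=3$, where $|\Omega|-d\le 3$ and the "design" is degenerate; these can be checked by hand. Case (iii) is the most delicate: it occurs precisely when the quadratic above has a double root, i.e.\ when $E_\ell=0$ so $|B_\ell|=d/2$ for every $\ell$; one computes that this forces $d=\rho(\rho-1)$, which since $d$ must be even admits the parametrisation $\rho=2i+1$. Here the complementation symmetry cannot be broken by appealing to block size, so I would fix a reference index $p\in\{1,\ldots,d\}$ and, for each $\ell$, choose the sign of $\beta_\ell$ so that $p\in B_\ell$; the trimmed sets $B_\ell\setminus\{p\}$ on the $d-1$ remaining points have the right cardinality $d/2-1=(2i-1)(i+1)$, and the intersection computations specialise to the asserted parameters. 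For the converse direction, I would reverse the construction: given a quasi-symmetric design with the prescribed parameters, define vectors $\beta_B$ as explicit signed combinations of $\alpha_1,\ldots,\alpha_d$ indexed by the blocks (and the reference point $p$, in case (iii)), and verify directly from the Gram-matrix identities that the resulting line system is equiangular, saturates the relative bound, and contains the $\alpha_i$ as an incoherent set.

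The principal obstacle I anticipate lies in case (iii): showing that the sign-fixing procedure at the reference point $p$ is globally consistent and that the extracted parameters of the design on $d-1$ points genuinely match those in the theorem requires tracking how the $d$-point expansion identities descend to the trimmed blocks. A secondary technical hurdle is verifying that $|B_\ell\cap B_{\ell'}|$ takes only two values rather than more: this relies on the fact that $(\beta_\ell,\beta_{\ell'})$ is itself restricted to $\pm\kappa$, so the two defining quadratics conspire to allow only two intersection sizes, and checking this cleanly is what produces the quasi-symmetric (as opposed to merely $2$-design) structure.
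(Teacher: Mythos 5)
Your construction of the blocks is sound and is essentially the paper's: your sign sets $B_\ell$ are exactly the sets $\Gamma_1(\gamma),\Gamma_2(\gamma)$ of Section \ref{sec:taylor}, your norm expansion recovers Proposition \ref{prop:tayeq} (the block sizes as roots of $4x^2-4dx+(\rho-1)^2(d+\rho)$), your inner-product expansion recovers Proposition \ref{prop:tayint} (at most two intersection sizes), and your converse via signed combinations indexed by blocks matches Construction \ref{const:1}, Theorem \ref{thm:const1}, Corollary \ref{cor:thmcon} and Theorem \ref{thm:desdeq2k}. The genuine gap is the claim that ``a standard double count of incidences $(\ell,\{i,j\})$ with $\{i,j\}\subseteq B_\ell$ \ldots establishes the $2$-design property.'' A global double count only determines the \emph{average} number of blocks through a pair; it cannot show this number is the same for every pair, and having two intersection numbers does not by itself force a $2$-design (compare Example \ref{ex:janko}: a block set with two intersection numbers whose lines saturate the relative bound but which is not a $2$-design). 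The constancy of $\lambda$ is the technical heart of the paper's argument: for each fixed pair $\alpha,\beta\in\Gamma$ one splits $\Omega\setminus(\Gamma\cup S_{\alpha\beta})$ according to whether $\alpha,\beta$ lie in $\Gamma_1(\gamma)$ or $\Gamma_2(\gamma)$, and uses the regularity of the two-graph per pair --- $|S_{\alpha\beta}|=a$ for all pairs (Theorems \ref{thm:relbd} and \ref{thm:desint1}) together with the counting identities of Lemma \ref{lem:setsum}, notably \eqref{eq:setsum4}--\eqref{eq:setsum5} --- to obtain two linear equations for the per-pair counts whose coefficients do not depend on the pair; solving them (Theorem \ref{thm:linedes}) gives the constant $\lambda$. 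Saturation of the relative bound thus enters pointwise and pairwise, not merely through the global identity $|\Omega|=d(\rho^2-1)/(\rho^2-d)$ that your double count uses.

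The same gap recurs in your case (iii): anchoring the complementation ambiguity at a reference point $p$ does reproduce the derived design, but you still owe the design property of the trimmed blocks; the paper gets it by first proving that the unordered halves form a $3$-design (Theorem \ref{thm:deq2k}, again via the regularity machinery, e.g.\ $|S_{\alpha\beta}\cup S_{\alpha\gamma}|=3a/2$) and then passing to the derived design at a point --- ``the intersection computations specialise'' does not supply the constant $\lambda$. Two smaller points: the parametrisation $\rho=2i+1$ does not follow from ``$d$ must be even'' (indeed $d=\rho(\rho-1)$ is always even); it comes from Neumann's theorem (Theorem \ref{thm:rhoodd}) once $|\Omega|>2d$, with $\rho=2$ giving case (i). And in case (iv) a quasi-symmetric design must have \emph{exactly} two intersection numbers, so you must dispose of the Fisher-equality case $|\Omega|=2d$, where only one intersection value can occur; the paper shows this case collapses to $d=3$, i.e.\ case (ii) (see the proof of Theorem \ref{thm:eqquasi}), so it is part of the logic rather than an independent ``small-dimensional check.''
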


A \emph{quasi-symmetric design} is a combinatorial block design with exactly two block intersection numbers (see Section \ref{sec:prelims} for definitions). 
The only quasi-symmetric designs that satisfy the conditions of Theorem \ref{thm:maineqquasi} of which the author is aware are
the $S(2,2,n)$ Steiner systems for $n=5$ and $7$, and the $S(4,7,23)$ Steiner system.
The corresponding sets of equiangular lines are respectively the maximal sets of lines in dimensions $6,7$ and $23$. In fact, by considering an extra condition, we
can characterise the known sets of lines that saturate the relative and incoherence bounds, which we now explain.

Given any set $\Omega$ of equiangular lines in $\R^d$ and any pair of lines
$\alpha,\beta\in\Omega$, define
$$S_{\alpha\beta}=\{\gamma\in\Omega\,|\,(\alpha,\beta)(\alpha,\gamma)(\beta,\gamma)<0\}.$$
If $\Omega$ is saturates the relative bound, then it is known that the cardinality of $S_{\alpha\beta}$ is constant for all pair of lines. Moreover, 
if $\Gamma\subseteq\Omega$ is an incoherent subset of lines, then we prove that $$|S_{\alpha\beta}\cap S_{\alpha\gamma}|$$
is constant for all $3$-subsets $\{\alpha,\beta,\gamma\}$ of $\Gamma$. Additionally, for the set of lines in dimensions $6$, $7$ and $23$, we show
that
$$|S_{\alpha\beta}\cap S_{\eta\nu}|$$
is constant for all $4$-subsets $\{\alpha,\beta,\eta,\nu\}$ of any incoherent subset $\Gamma$ of $d$ lines. We prove that this extra property actually characterises the known sets of lines
that saturate the relative and incoherence bounds.

\begin{Thm}\label{thm:main4} Let $\Omega$ be a set of equiangular lines in $\R^d$ that saturates the relative bound
and the incoherence bound, and let $\Gamma\subseteq\Omega$ be an incoherent set of size $d$. If for $d\geq 4$,
$$|S_{\alpha\beta}\cap S_{\eta\nu}|$$
is constant for all $4$-subsets $\{\alpha,\beta,\eta,\nu\}$ of $\Gamma$, then
$d=2,3,6,7$ or $23$ and $\Omega$ is isometric to the corresponding set of equiangular lines given in Section \ref{sec:examples}.
\end{Thm}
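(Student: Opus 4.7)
The plan is to reduce to Theorem \ref{thm:maineqquasi} and then use the extra $4$-subset regularity to force the resulting quasi-symmetric design to be highly structured, which will cut the admissible parameters down to a finite list. The cases $d=2,3$ fall immediately out of parts (i), (ii) of Theorem \ref{thm:maineqquasi}, so from now on I assume $d\geq 4$. Under this assumption that theorem produces a quasi-symmetric $2$-$(d,k,\lambda;s_1,s_2)$ design $\mathcal{D}$ whose points are identified with the lines of the incoherent set $\Gamma$ and whose blocks correspond to the remaining $|\Omega|-d$ lines of $\Omega$, the incidence being determined by the sign pattern of the inner products of each external line with the frame $\Gamma$.

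Under this correspondence the set $S_{\alpha\beta}$ (for $\alpha,\beta\in\Gamma$) is identified with a canonical subfamily of blocks of $\mathcal{D}$ containing the pair $\{\alpha,\beta\}$ with a prescribed intersection type. Constancy of $|S_{\alpha\beta}|$ for all pairs already follows from the $2$-design structure; the further hypothesis that $|S_{\alpha\beta}\cap S_{\eta\nu}|$ is constant for every $4$-subset of $\Gamma$ therefore translates into the assertion that every $4$-element subset of points of $\mathcal{D}$ lies in a constant number of blocks of the chosen sign-type. This is a form of $4$-balance; combined with the quasi-symmetric property (only two block intersection numbers) it is an extremely rigid combinatorial condition, since a generic quasi-symmetric $2$-design admits no such higher-order regularity.

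The next task is to enumerate those quasi-symmetric $2$-designs with the parameters prescribed by Theorem \ref{thm:maineqquasi}(iii) and (iv) that also satisfy this $4$-balance. Substituting the explicit formulae for $k$, $\lambda$, $s_1$, $s_2$ in terms of $d$ and $\rho$ into the standard Fisher-type inequalities and block-intersection identities for $t$-designs with $t\geq 3$ should leave only three parameter sets surviving: the trivial $2$-$(5,2,1;1,0)$ design (corresponding to $d=6$), the unique design on $7$ points underlying the $28$ equiangular lines in $\R^7$, and the $S(4,7,23)$ Steiner system (corresponding to $d=23$). In particular the infinite family of case (iii) with $i\geq 2$ is ruled out because the required $4$-balance forces additional parameter conditions which, together with a Fisher-type bound, admit no integer $i\geq 2$. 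Once these three parameter sets are singled out, the uniqueness of each of the corresponding designs (classical for $S(2,2,5)$ and $S(4,7,23)$, and a short direct argument on $7$ points) together with the reconstruction of $\Omega$ from $\mathcal{D}$ in Theorem \ref{thm:maineqquasi} yields the claimed isometry with the examples of Section \ref{sec:examples}.

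The principal obstacle is the translation step: precisely pinning down which sign-type refinement of the "number of blocks on a $4$-set" the hypothesis controls, and then verifying that the ensuing $4$-balance really does eliminate the case (iii) family. This amounts to a delicate but essentially routine calculation with the parameters $\rho=2i+1$ of case (iii); after that, everything reduces to parameter bookkeeping and appeals to existing classification theorems for quasi-symmetric designs carrying $t$-design structure for $t\geq 3$.
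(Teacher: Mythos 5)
Your overall reduction---pass to Theorem \ref{thm:maineqquasi} and then use the $4$-subset hypothesis to prune the parameters---has the right shape, but the proposal leaves exactly the decisive step unproved, and the tools you name would not supply it. First, the translation is not what you state: $\gamma\in S_{\alpha\beta}\cap S_{\eta\nu}$ means that the block $\Gamma_1(\gamma)$ meets each of the pairs $\{\alpha,\beta\}$ and $\{\eta,\nu\}$ in exactly one point, so the hypothesis is a constancy condition on blocks that \emph{split} two disjoint pairs, not on blocks containing (or meeting in a prescribed way) a $4$-set of points; in particular it is not ``$4$-balance'' in the sense of a $4$-design (for $d=6$ the relevant blocks have size $3$), and there is no classification theorem for quasi-symmetric designs with $t\geq 3$ structure that applies to it. Second, the case (iii) family with $i\geq 2$ cannot be eliminated by ``Fisher-type inequalities and block-intersection identities'': the existence of those quasi-symmetric designs is open (the paper's tables list many such parameter sets as undecided), so only the $4$-set hypothesis itself can rule them out, and your proposal never extracts a usable equation from it---you defer precisely this to a ``routine calculation'' that is in fact the content of the theorem.

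For comparison, the paper's argument at this point is spectral rather than design-theoretic. Form the $(n-d)\times\binom{d}{2}$ matrix $M$ whose columns are the characteristic vectors of the sets $S_{\alpha\beta}$, $\alpha,\beta\in\Gamma$. Theorem \ref{thm:desint1} together with the constancy hypothesis give
$$M^TM=aI+\tfrac{a}{2}A+c(J-I-A),$$
where $A$ is the adjacency matrix of the triangular graph $T(d)$ and $c=2a(d-k-1)(k-1)/((d-2)(d-3))$ by Lemma \ref{lem:4setsum}. The eigenvalues $\theta_0=k(d-k)a$, $\theta_1=\tfrac{a(d-2)}{2}-c(d-3)$, $\theta_2=c$, with multiplicities $1$, $d-1$, $d(d-3)/2$, are then played against the fact that the rank of $M^TM$ is at most $n-d$: either $n-d=d(d-1)/2$, i.e.\ the absolute bound is met and Theorem \ref{thm:main} gives $d=7$ or $23$, or $\theta_1=0$, which via the parametrization of Remark \ref{rem:intermsints} forces $2k=d$, and then the multiplicity count $d(d-3)/2=n-d-1$ combined with Remark \ref{rem:paramsdes} forces $i=1$, i.e.\ $d=6$. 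Without this step, or some equally quantitative use of the $4$-set hypothesis, the parameter families you wave away remain alive, so the proposal as it stands has a genuine gap.
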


One may have guessed that Theorem \ref{thm:main} would be true. The status quo since the 1970s has been that there are only four known sets of lines that saturate the absolute bound, 
and hence, one may have expected that there would be only finitely many sets of lines that saturate the absolute and incoherence bounds.
However, there are infinitely many known sets of lines that saturate the relative bound, but the above result is suggestive that 
only a finite number of them also saturate the incoherence bound. This is surprising (at least to the author). Moreover, one may ask why the maximal set of lines in $\R^6$ 
is the only other set of lines that appears in the above theorem.

To understand the known sets of lines that saturate the relative and incoherence bounds further, in Section \ref{sec:rootsE8} we look at the set of $276$ equiangular 
lines in $\R^{23}$ in more detail. In particular, we show how one can construct the roots of the $E_8$ lattice
from these lines. The consequences of this construction are interesting with respect to the Leech lattice. Using Conway's description of the Leech lattice \cite[Chapter 10]{conwaysloane}, let $\Lambda$
denote the Leech lattice and $\Lambda(n)$ denote the set of lattice vectors $v$ such that $v.v=16n$, so $\Lambda(2)$ is the set of minimal norm vectors of $\Lambda$.
By considering the stabiliser of a vector $v\in\Lambda(3)$, one can show that there exist $276$ pairs of antipodal lattice vectors in $\Lambda(5)$
that can be identified with the $276$ equiangular lines in $\R^{23}$, which we describe in Section \ref{sec:pfmain5}.
Therefore, the above construction of the roots of $E_8$ allows us to state the next result, which may be of independent interest.

\begin{Thm}\label{thm:main5} Let $\Lambda$ be the Leech lattice in $\R^{24}$ and $\Lambda_{276}$ be the sublattice of $\Lambda$ generated by $276$ antipodal lattice vector 
pairs of $\Lambda$ mentioned above (see Section \ref{sec:pfmain5}). Then there exists an $8$-dimensional
subspace $W$ of $\R^{24}$ such that the projection of $\Lambda_{276}$ onto $W$ can be identified with the $E_8$ lattice.
\end{Thm}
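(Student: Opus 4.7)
The plan is to make explicit the $8$-dimensional subspace $W$, exhibit a natural projection $\pi : \R^{24} \to W$, and then verify both containments $\pi(\Lambda_{276}) \subseteq E_8$ and $E_8 \subseteq \pi(\Lambda_{276})$ (up to a fixed rescaling).

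First I would recall Conway's coordinate model for $\Lambda$ in $\R^{24}$ together with the description, taken from Section \ref{sec:pfmain5}, of how the $276$ antipodal pairs in $\Lambda(5)$ arise as the norm-$80$ vectors orthogonal to a fixed $v \in \Lambda(3)$. The sublattice $\Lambda_{276}$ is their $\Z$-span and lies in the $23$-dimensional space $v^{\perp}$.

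Next I would identify $W$ as follows. Using the transitivity of the monomial stabiliser of $v$ on suitable octads of the Golay code $\mathcal{G}_{24}$, pick an octad $\mathcal{O}$ compatible with the support of $v$, and let $W$ be the coordinate $8$-plane indexed by $\mathcal{O}$. Take $\pi : \R^{24} \to W$ to be the orthogonal projection. The construction of Section \ref{sec:rootsE8} is then visibly the composition of the inclusion $\Lambda_{276} \hookrightarrow \R^{24}$ with $\pi$, since that construction reads off coordinates of Leech vectors restricted to $\mathcal{O}$; rereading it through this lens recovers all $240$ roots of $E_8$ as projections of $240$ norm-$80$ vectors in $\Lambda_{276}$. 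Because these roots generate $E_8$ as a $\Z$-module, this already yields $E_8 \subseteq \pi(\Lambda_{276})$.

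The main obstacle, and the bulk of the work, is the reverse inclusion $\pi(\Lambda_{276}) \subseteq E_8$. To handle it I would use the coordinate conditions defining $\Lambda$: every Leech vector lies in $\tfrac{1}{\sqrt{8}}\Z^{24}$, and its restriction to any Golay octad $\mathcal{O}$ satisfies a congruence of the form ``all coordinates integer with even sum, or all half-integer with sum congruent to a fixed residue''. These are precisely the conditions defining the standard $E_8$ in the $8$ coordinates indexed by $\mathcal{O}$. Verifying this on a $\Z$-basis of $\Lambda_{276}$ is then a small finite check, made tractable by the $\Aut(\Lambda)_v$-action on the generators, and combining the two inclusions yields $\pi(\Lambda_{276}) = E_8$ up to isometry of $W$. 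The conceptual payoff of this approach is that $\pi$ is nothing more than orthogonal projection onto a Golay octad, and the coincidence with $E_8$ is forced by the compatibility between the Golay-code conditions defining $\Lambda$ and the standard coordinate description of $E_8$ on an octad.
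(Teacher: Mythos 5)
Your first containment is fine: the standard coordinate description of $\Lambda$ together with the self-duality of the Golay code (intersections of codewords are even) does show that the restriction of any Leech vector to an octad $\mathcal{O}$ lies, after rescaling by $\tfrac{1}{\sqrt2}$, in a fixed copy of $E_8$; equivalently, the projection of all of $\Lambda$ onto the coordinate $8$-plane of $\mathcal{O}$ is $(\sqrt2 E_8)^*$. The gap is in the other direction, and it is fatal because it rests on a misidentification of $W$. The construction of Section \ref{sec:rootsE8} is \emph{not} projection onto an octad coordinate plane: there $W$ is the $(-1)$-eigenspace of an involution $x\in M_{23}\leq\Aut(\Lambda)_v$, i.e.\ the span of the eight vectors $e_i-e_{x(i)}$ attached to the transpositions of $x$, which sits diagonally inside the coordinate space of the $16$-set complementary to the octad fixed by $x$. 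It is only for that $W$ that the $240$ generators not fixed by $x$ project to vectors of equal norm $2/5$ with inner products $0,\pm1/5$, so that Theorem \ref{thm:rootsE8} (via the uniqueness of the $(8,240,1/2)$ spherical code) identifies the projected generators with the $E_8$ roots, and hence the projected lattice with $E_8$.

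For your $W$ the conclusion is actually false, so no finite check on a basis can rescue it. Writing the generators of $\Lambda_{276}$ in Conway's coordinates, they are (up to the factor $\tfrac1{\sqrt8}$) the $23$ vectors $(3,7,-1^{22})$ and the $253$ vectors $(-1,3^7,-1^{16})$. Restricting to any octad $\mathcal{O}$ and rescaling, every generator lands in the coset $u+2\Z^8$ of the even lattice, where $u=(-\tfrac12,\ldots,-\tfrac12)$, and the even parts that occur generate only $2\Z^8$ (if $1\in\mathcal{O}$) or $2D_8$ (if $1\notin\mathcal{O}$). Hence $\pi(\Lambda_{276})=\Z u+2\Z^8$ or $\Z u+2D_8$, a sublattice of index $64$ or $128$ in the rescaled $E_8$; in particular it misses all roots of the form $(1,\pm1,0^6)$ and is not similar to $E_8$ (its minimum and determinant are incompatible). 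So your projected generators are not ``the $240$ roots'' at all --- their norms are not even equal --- and the inclusion $E_8\subseteq\pi(\Lambda_{276})$ fails for this choice of $W$. To prove the theorem you need the eigenspace $W$ of the involution, and the equal-norm/spherical-code argument (or some substitute) to pin down the projected configuration as the $E_8$ root system.
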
 

This result allows us to identify various maximal sets of equiangular lines in lower dimensions with subsets of the $276$ lines, including the maximal set
of lines in $\R^6$. We also show a correspondence between
these sets of lines in lower dimensions and the exceptional curves of del Pezzo surfaces that is suggestive as to why the maximal set of lines in $\R^6$ also saturates 
the incoherence bound.

The layout of the rest of the paper is as follows. In Section \ref{sec:prelims} we introduce some standard results about $t$-designs, two-graphs and equiangular lines, 
and in Section \ref{sec:examples} we describe the known examples of sets of equiangular lines that saturate relative and incoherence bounds.
In Section \ref{sec:maxinc} we prove various results about sets of equiangular lines that saturate
the relative bound, including results on maximal incoherent subsets of such sets. Of particular importance is Theorem \ref{thm:eqquasi}, which proves that, in general, a quasi-symmetric design structure
must exist on any maximal incoherent $d$-subset of a set of equiangular lines that saturates the relative and incoherence bounds.
Using these results we are able to prove Theorem \ref{thm:main} and 
Theorem \ref{thm:main2} in Section \ref{sec:proofmain}. In Section \ref{sec:blocksets} we introduce \emph{block sets}, which are uniform $k$-hypergraphs with only two block intersection numbers. 
We show how one can construct sets of equiangular lines from block sets, and in particular, 
sets of lines that saturate the incoherence bound. Next, in Section \ref{sec:thm3}, we prove an analogous result to Theorem \ref{thm:eqquasi} which deals
with certain specific cases. Using these results we are then able to prove Theorem \ref{thm:maineqquasi}.
In Section \ref{sec:main4} we show that the known examples of sets of lines that saturate the relative and incoherence bounds satisfy
the conditions of Theorem \ref{thm:main4}, and subsequently we prove that theorem.
We also make two conjectures on these sets of lines. We prove Theorem \ref{thm:main5} in Section \ref{sec:rootsE8}.
In the final section we examine quasi-symmetric designs whose parameters satisfy the conditions of
Theorem \ref{thm:maineqquasi}, providing tables of some parameters, and we discuss certain non-existence results.

\section{Preliminaries}\label{sec:prelims}

\subsection{$t$-designs} A $t$-$(n,k,\lambda)$ design (or simply $t$-design) is a pair $\De=(\P,\B)$ where $\P$ is a point set of size $n$ and $\B$ is a set of $k$-sets (or blocks) of $\P$ with the property
that every $t$-set of $\P$ is contained in exactly $\lambda$ elements of $\B$. Any $t$-$(n,k,\lambda)$ design is also a $j$-$(n,k,\lambda_j)$ design for $0\leq j\leq t$, from which one deduces
that a $t$-$(n,k,\lambda)$ design exists only if for $0\leq j\leq t$
$$\lambda_j\binom{k-j}{t-j}=\lambda\binom{n-j}{t-j}.$$
By convention, $\lambda_0$, the total number of blocks, is denoted by $b$, and $\lambda_1$, the number of blocks that contain a given point, is denoted by $r$. 

Given a $t$-$(n,k,\lambda)$ design $\De=(\P,\B)$ and a point $p\in\P$, let $$\B_p=\{B\backslash\{p\}\,:\,B\in\B,p\in B\}$$ and
$$\B^p=\{B\,:\,B\in\B,p\notin\B\}.$$ Then the \emph{derived design $\De_p=(\P\backslash\{p\},\B_p)$} is a $(t-1)$-$(n-1,k-1,\lambda)$ design,
and the \emph{residual design $\De^p=(\P\backslash\{p\},\B^p)$} is a $(t-1)-(n-1,k,\lambda_{t-1}-\lambda_t)$ design.

Theorem \ref{thm:maineqquasi} relates sets of equiangular lines that saturate the relative bound and the incoherence bound to quasi-symmetric designs. 
A \emph{quasi-symmetric} design is a $t$-$(n,k,\lambda)$ design with exactly
two block intersection numbers, that is there exist integers $s_1,s_2$ such that all pairs of distinct blocks have exactly $s_1$ or $s_2$ elements in common. We refer to such a design as a
$t$-$(n,k,\lambda;s_1,s_2)$ design. These designs have been studied 
extensively due to their connections with strongly regular graphs.

\begin{Thm}\label{thm:quasisrg}\cite[Theorem 3.8]{shriksane} 
Let $(\P,\B)$ be a quasi-symmetric $2$-$(n,k,\lambda;s_1,s_2)$ design and form the \emph{block graph $\G$ of $(\P,\B)$} by taking as vertices
the blocks in $\B$, where two blocks are adjacent if and only if they have $s_1$ elements in common. Then if $\G$ is connected it is a $(b,t,p,q)$ strongly regular graph
with eigenvalues
$$\theta_0=\frac{k(r-1)-(b-1)s_2}{s_1-s_2},\,\,\,\,\,\theta_1=\frac{(r-\lambda)-(k-s_2)}{s_1-s_2},\,\,\,\,\,\theta_2=-\frac{k-s_2}{s_1-s_2}$$
where
$$t=\theta_0,\,\,\,\,\,\,\,p=\theta_0+\theta_1+\theta_2+\theta_1\theta_2,\,\,\,\,\,q=\theta_0+\theta_1\theta_2.$$
\end{Thm}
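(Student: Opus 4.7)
The plan is to extract the spectrum of the block graph $\G$ from the incidence matrix of the design, and then invoke the standard fact that a connected regular graph is strongly regular if and only if its adjacency matrix has exactly three distinct eigenvalues. Let $N$ denote the $n \times b$ incidence matrix of $(\P,\B)$, with rows indexed by points and columns by blocks. Because each block has size $k$ and any two distinct blocks meet in either $s_1$ or $s_2$ points, the Gram matrix decomposes as
$$N^T N = (k - s_2) I + (s_1 - s_2) A + s_2 J,$$
where $A$ is the adjacency matrix of $\G$ and $J$ is the all-ones matrix. Inverting this identity reduces the spectral question for $\G$ to the spectral question for $N^T N$.

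Next I would bring in the Fisher-type identity $NN^T = (r - \lambda) I + \lambda J$ valid for any $2$-design, whose eigenvalues are $rk$ on $\mathbf{1}_n$ and $r - \lambda$ on the orthogonal complement. Since $N^T N$ and $NN^T$ share nonzero spectrum and $N^T N \mathbf{1}_b = rk\mathbf{1}_b$, the eigenvalues of $N^T N$ are $rk$ on $\mathbf{1}_b$, $r - \lambda$ on an $(n-1)$-dimensional subspace of $\mathbf{1}_b^\perp$, and $0$ on the remaining $b-n$ dimensions, should any exist. Substituting these into
$$A = \tfrac{1}{s_1 - s_2}\bigl(N^T N - (k - s_2) I - s_2 J\bigr)$$
yields precisely the three scalars $\theta_0, \theta_1, \theta_2$ in the statement. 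Crucially, the kernel of $N^T N$ contributes to the eigenspace of $A$ with eigenvalue $-(k - s_2)/(s_1 - s_2) = \theta_2$, so no spurious fourth eigenvalue arises; verifying this coincidence is the main bookkeeping point.

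Connectedness of $\G$ forces the $\theta_0$-eigenspace to be one-dimensional, so $A$ has exactly three distinct eigenvalues and $\G$ is strongly regular. To read off the parameters, I would note that $(A - \theta_1 I)(A - \theta_2 I)$ annihilates every eigenvector orthogonal to $\mathbf{1}_b$ and is therefore a scalar multiple of $J$; expanding and matching with the defining identity $A^2 = tI + pA + q(J - I - A)$ of a $(b,t,p,q)$ strongly regular graph yields $t = \theta_0$, $q = \theta_0 + \theta_1\theta_2$, and $p = \theta_0 + \theta_1 + \theta_2 + \theta_1\theta_2$, completing the proof. Aside from the sign bookkeeping between $N^T N$ and $A$, the argument is essentially a routine computation with incidence matrices.
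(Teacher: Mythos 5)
Your argument is correct: the decomposition $N^TN=(k-s_2)I+(s_1-s_2)A+s_2J$, the transfer of the spectrum of $NN^T=(r-\lambda)I+\lambda J$ to $N^TN$, and the minimal-polynomial matching $(A-\theta_1I)(A-\theta_2I)=cJ$ give exactly the stated eigenvalues and parameters. The paper offers no proof of this statement (it is quoted from Shrikhande--Sane, Theorem 3.8), and your incidence-matrix argument is essentially the standard proof found there, so there is nothing further to reconcile.
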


We note if $s_1<k$, then the block graph $\G$ of a quasi-symmetric design is connected \cite[Theorem 0]{neum}.
The following result gives some necessary conditions for the existence of quasi-symmetric designs.

\begin{Thm}\label{thm:calder}\cite{calderbank}
Let $(\P,\B)$ be a $2$-$(n,k,\lambda)$ design such that $|B_1\cap B_2|=k-x$ or $k-y$ for all pairs of distinct blocks $B_1,B_2\in\B$. Then
\begin{equation}\label{eq:calder}f(n,k,x,y)=(n-1)(n-2)xy-k(n-k)(n-2)(x +y) +k(n- k)(k(n - k)- 1)\geq 0\end{equation}
Moreover, $f(n,k,x,y)=0$ if and only if $(\P,\B)$ is a $3$-design.
\end{Thm}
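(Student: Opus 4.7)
The plan is to apply the Cauchy--Schwarz inequality to the triple-covering numbers $\lambda_3(T):=|\{B\in\B:T\subseteq B\}|$, for $T$ ranging over the $\binom{n}{3}$ three-subsets of $\P$, and to show that the resulting bound simplifies to $f(n,k,x,y)\geq 0$, with equality precisely when $\lambda_3$ is constant on triples.

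First, I would compute the zeroth and first moments of $\lambda_3$ by standard double counting:
\[
\sum_T\lambda_3(T)=b\binom{k}{3},\qquad\sum_T\binom{\lambda_3(T)}{2}=\sum_{\{B_1,B_2\}}\binom{|B_1\cap B_2|}{3},
\]
the latter sum being over unordered pairs of distinct blocks. Writing $N_x$ and $N_y$ for the numbers of block pairs with intersections of size $k-x$ and $k-y$ respectively, the right-hand side equals $N_x\binom{k-x}{3}+N_y\binom{k-y}{3}$.

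Next, I would pin down $N_x$ and $N_y$. Switching the order of summation over points yields the global identities, with sums over ordered pairs of distinct blocks,
\[
\sum_{B_1\ne B_2}|B_1\cap B_2|=nr(r-1),\qquad\sum_{B_1\ne B_2}|B_1\cap B_2|^2=nr(r-1)+n(n-1)\lambda(\lambda-1);
\]
together with $N_x+N_y=\binom{b}{2}$, this is a linear system uniquely determining $(N_x,N_y)$, and its consistency is automatic from the quasi-symmetric hypothesis.

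Finally, Cauchy--Schwarz applied to the vectors $(\lambda_3(T))_T$ and the all-ones vector on $\binom{n}{3}$ triples yields
\[
\binom{n}{3}\sum_T\lambda_3(T)^2\geq\Big(\sum_T\lambda_3(T)\Big)^2,
\]
with equality if and only if $\lambda_3$ is constant, i.e., if and only if $(\P,\B)$ is a $3$-design. Using $\sum\lambda_3^2=2\sum\binom{\lambda_3}{2}+\sum\lambda_3$, substituting the moment expressions above, and eliminating $b,r,\lambda$ via $bk=nr$ and $r(k-1)=\lambda(n-1)$, I expect the inequality to rearrange exactly into $f(n,k,x,y)\geq 0$. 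The main obstacle is this last algebraic reduction: the polynomial $f$ has a specific factored form, and matching it to the expanded Cauchy--Schwarz bound requires careful symbolic manipulation. To guide the calculation I would first eliminate $r,b,\lambda$ in favour of $n,k,x,y$, then sanity-check against small examples such as the Fano plane ($x=y=2$, yielding $f(7,3,2,2)=12>0$, consistent with the Fano plane not being a $3$-design) or the unique $2$-$(8,4,3)$ design ($x=2,y=4$, yielding $f(8,4,2,4)=0$, consistent with this design being the Steiner system $S(3,4,8)$), and only then carry out the full reduction.
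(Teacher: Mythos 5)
Your strategy is sound, and since the paper gives no argument of its own for this statement (it simply cites \cite{calderbank}), there is nothing internal to compare against; what you propose is essentially the standard triple-counting proof of Calderbank's inequality. All of your moment identities are correct, and the deferred algebraic reduction does close. Two remarks on how it closes cleanly. First, you never need $N_x$ and $N_y$ separately (indeed when $x=y$ they are not individually determined): since every intersection size $m=|B_1\cap B_2|$ satisfies $(m-(k-x))(m-(k-y))=0$, every power sum $\sum m^j$ over pairs of distinct blocks reduces linearly to $\sum m$ and the pair count, and the ``redundant'' third equation in your overdetermined system is precisely the relation that eliminates $\lambda$ from the final expression. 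Carrying this through (using $bk=nr$, $r(k-1)=\lambda(n-1)$), the Cauchy--Schwarz gap collapses to
\[
\sum_{T\subseteq\P,\;|T|=3}\Bigl(\lambda_3(T)-\frac{\lambda(k-2)}{n-2}\Bigr)^{2}
=\frac{\lambda^{2}\,n(n-1)(n-k)}{6\,k(k-1)^{2}(n-2)}\;f(n,k,x,y),
\]
with a manifestly positive factor in front of $f$ (as a check, this evaluates correctly on the Fano plane, on the $2$-$(6,3,2)$ design, on a Steiner triple system of order $9$, and on a $2$-$(11,5,2)$ biplane). This single identity gives both conclusions at once: $f\geq 0$, and $f=0$ exactly when $\lambda_3$ is constant, i.e.\ when $(\P,\B)$ is a $3$-design (with the harmless caveat that $k\geq 3$ is implicitly assumed, since for $k\leq 2$ one has $\lambda_3\equiv 0$). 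So the only thing missing from your write-up is this final computation, and your plan for carrying it out is viable as stated.
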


We refer the reader to \cite{camvan} and \cite{shriksane}, and references therein, for more details on $t$-designs, and more specifically, quasi-symmetric designs.

\subsection{Two-graphs, Switching classes of Graphs, and Equiangular lines}
A \emph{two-graph} on a finite point set $\P$ of size $n$ is a pair $(\P,\B)$ where $\B$ is a set of $3$-sets of $\P$ such that every $4$-set of $\P$ contains an even number of 
elements of $\B$ as subsets. 
A two-graph $(\P,\B)$ is \emph{regular} if it is also a $2$-$(n,3,a)$ design for some integer $a$.

Given a two-graph $(\P,\B)$, we say that a $k$-set $B$ is a \emph{coherent} $k$-set if $|B|\geq 3$ and every
$3$-set of $B$ is an element of $\B$, and we say $B$ is an \emph{incoherent} $k$-set if $|B|\leq 2$ or $|B|\geq 3$ and every 
$3$-set of $B$ is not an element of $\B$. If $(\P,\B)$ is a regular two-graph on $n$ points, and therefore is a $2$-$(n,3,a)$ design, then it is known
that every coherent $3$-set is contained in $b$ coherent $4$-sets, where $n=|\P|=3a-2b$ \cite[Proposition 3.1]{taylor}. If we want to make reference to all
three of these parameters, we say $(\P,\B)$ is a regular two-graph with parameters $(n,a,b)$. For any regular two-graph $(\P,\B)$ with parameters $(n,a,b)$,
the complementary two-graph $(\P,\B^*)$ is a regular two-graph with parameters $(n,a^*,b^*)$, where $\B^*=\P^{\{3\}}\backslash\B$ and $a^*=n-a-2$, $b^*=n/2-b-3$. 

For a point set $\P$, the \emph{complete} two-graph is $(\P,\P^{\{3\}})$ and the \emph{null} two-graph is $(\P,\emptyset)$. Other examples of two-graphs come from
graphs. Namely, let $\G=(V\G,E\G)$ be an undirected graph with vertex set $V\G$ and edge set $E\G$. 
Then the two-graph associated with $\G$ is $(V\G,\B_G)$ where $\B_G$ consists of the $3$-sets of $V\G$ with the property 
that the induced subgraph on the three vertices has an odd number of edges.

A two-graph on $n$ points is equivalent to a \emph{switching class of graphs on $n$ vertices}. 
Given a graph $\G$ and a subset $X$ of vertices of $V\G$, the operation of switching with respect to $X$ interchanges edges and non-edges between
$X$ and its complement $V\G\backslash X$, and leaves all other edges and non-edges alone.
Switching forms an equivalence relation on the set of all graphs on $n$ vertices, and switching equivalent graphs determine the same two-graph.

We can also construct two-graphs from sets of equiangular lines in $\R^d$. 
Let $\Omega$ be a set of equiangular lines in $\R^d$ and let $$\C=\{\{\alpha,\beta,\gamma\}\in\Omega^{\{3\}}\,:\,(\alpha,\beta)(\beta,\gamma)(\gamma,\alpha)<0\}.$$
(Here, and throughout, if the context is clear, we let $\alpha$ denote both a line in $\Omega$ and a unit vector representing that line.)
Then $(\Omega,\C)$ forms a two-graph. This can be seen by first choosing a set $\U(\Omega)$ of unit vectors, 
each one representing a line in $\Omega$. Now define the graph on $\U(\Omega)$ where two vectors $\alpha_1,\alpha_2$ are adjacent if and only if $(\alpha_1,\alpha_2)<0$.
By identifying the unit vectors with the lines that they represent, the corresponding two-graph of this graph is isomorphic to $(\Omega,\C)$. 
We note that this two-graph is independent of the choice of unit vectors, that is, by choosing another set of unit vectors to represent the lines, we define
a graph that is switching equivalent to the original one.

One can also construct a set of equiangular lines in Euclidean space from any graph. Indeed, two-graphs on $n$ points, switching classes of graphs on $n$ vertices, 
and linearly dependent $n$-sets of equiangular lines in Euclidean space (up to isometry) are equivalent objects. We refer the reader to \cite{seidelsurvey} for more details.
Given this equivalence, we shall in the sequel use the description (equiangular lines, two-graphs or switching classes) that best suits the circumstances. 

The next three results on the various upper bounds and angles for sets of equiangular lines are necessary in the sequel.

\begin{Thm}\label{thm:absbd}\cite[Theorem 3.5 - Due to M. Gerzon]{lemmens} Let $\Omega$ be a set of equiangular lines in $\R^d$. Then $$|\Omega|\leq \frac{d(d+1)}{2}.$$
If equality holds then $d+2=4,5$, or the square of an odd integer. Moreover the corresponding angle is given by $\cos\theta=\frac{1}{\sqrt{d+2}}$.
\end{Thm}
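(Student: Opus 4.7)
The plan is to use the classical rank-one symmetric matrix trick of Koornwinder and Gerzon. Choose a unit vector $\alpha_i$ spanning each line in $\Omega$, and form the rank-one symmetric matrices $M_i = \alpha_i \alpha_i^{T}$ inside the real vector space $\Sym(d)$ of symmetric $d \times d$ matrices, which has dimension $d(d+1)/2$. Equip $\Sym(d)$ with the trace inner product $\langle A, B \rangle = \operatorname{tr}(AB)$. A direct computation shows
$$\langle M_i, M_j \rangle = \operatorname{tr}(\alpha_i\alpha_i^{T}\alpha_j\alpha_j^{T}) = (\alpha_i,\alpha_j)^{2} = \kappa^{2} \quad (i \ne j), \qquad \langle M_i, M_i \rangle = 1,$$
so the Gram matrix of $\{M_1,\dots,M_n\}$ is $(1-\kappa^{2})I + \kappa^{2}J$. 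Since $\kappa<1$, its eigenvalues $1-\kappa^{2}$ and $1+(n-1)\kappa^{2}$ are positive, hence the $M_i$ are linearly independent. This forces $n \le \dim \Sym(d) = d(d+1)/2$, which is Gerzon's absolute bound.

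Next I would extract the angle in the equality case. When $n = d(d+1)/2$ the $M_i$ form a basis of $\Sym(d)$, so the identity admits a unique expansion $I = \sum c_i M_i$. Taking the trace inner product of both sides with $M_j$ gives $1 = c_j(1-\kappa^{2}) + \kappa^{2}\sum_i c_i$, which is independent of $j$, so every $c_i$ equals a common value $c$. Taking the trace of $I = c \sum M_i$ gives $c = 2/(d+1)$, and substituting back yields $1 = \frac{2}{d+1}\bigl((1-\kappa^{2}) + \kappa^{2} d\bigr)$, which simplifies to $\kappa^{2}(d-1)(d+2) = d-1$, i.e. $\cos\theta = \kappa = 1/\sqrt{d+2}$, as claimed. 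A useful byproduct is the tight design identity $\sum_i \alpha_i \alpha_i^{T} = \tfrac{d+1}{2} I$, which I will exploit later.

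For the arithmetic restrictions on $d+2$, I would pass to the Seidel matrix $S$ of the set of lines: the Gram matrix $I + \kappa S^{\pm}$ of a consistent choice of unit representatives has rank $d$, so $S^{\pm}$ admits the eigenvalue $-1/\kappa = -\sqrt{d+2}$ with multiplicity $n-d = d(d-1)/2$. Since $\operatorname{tr}(S) = 0$ and $\operatorname{tr}(S^{2}) = n(n-1)$, a short calculation forces the remaining spectrum to consist of a single eigenvalue $\mu = (d-1)\sqrt{d+2}/2$ with multiplicity $d$. Because the entries of $S$ are integers, its eigenvalues are algebraic integers. If $\sqrt{d+2}$ is irrational then the two eigenvalues must be Galois-conjugate with equal multiplicities, forcing $d(d-1)/2 = d$, hence $d = 3$ and $d+2 = 5$; otherwise $\sqrt{d+2}$ is a positive integer, so $d+2$ is a perfect square.

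The remaining step — ruling out even squares larger than $4$ — is the only delicate point. Here I would use the integrality of the multiplicities in the tight design decomposition, combined with the fact that $\mu = (d-1)\sqrt{d+2}/2$ must itself be an integer and that further divisibility constraints arise from the $4$th–moment identity for $\sum_i \alpha_i^{\otimes 4} \in \Sym^{4}(\R^d)$ (equivalently, from the fact that the associated regular two-graph has strongly regular descendants whose parameters must be integers); a parity check on $k = \sqrt{d+2}$ then leaves only $d+2 = 4$ or $d+2$ an odd square. This final parity argument is the main obstacle and is where I would have to be most careful; the earlier linear-algebraic steps are completely routine.
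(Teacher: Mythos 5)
Your first three steps are sound and are essentially the classical Gerzon/Seidel argument (the paper itself only cites \cite{lemmens} for this theorem): the rank-one matrices $M_i=\alpha_i\alpha_i^{T}$ have positive definite Gram matrix $(1-\kappa^2)I+\kappa^2J$ in $\Sym(d)$, giving the bound; in the equality case the expansion of $I$ in the basis $\{M_i\}$ gives $\kappa^2(d-1)(d+2)=d-1$, hence $\kappa=1/\sqrt{d+2}$ (note your displayed intermediate equation has a misplaced parenthesis --- the correct identity is $1=\tfrac{2}{d+1}(1-\kappa^2)+\kappa^2 d$, since $\sum_i c_i = d$ --- but your stated conclusion is right); and the trace computation on the Seidel matrix (equality in Cauchy--Schwarz forces the remaining $d$ eigenvalues to coincide at $\mu=(d-1)\sqrt{d+2}/2$) together with the Galois-stability of the spectrum correctly yields that either $d=3$, so $d+2=5$, or $\sqrt{d+2}\in\Z$.

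The genuine gap is the final step, which you yourself flag: nothing you have written excludes $d+2$ being an even square greater than $4$. In particular, integrality of $\mu=k(k^2-3)/2$ with $k=\sqrt{d+2}$ is vacuous --- it holds for every integer $k$, even or odd, since $k(k^2-3)$ is always even --- and the appeal to fourth moments or to integrality of strongly regular descendant parameters is only a gesture; you would have to exhibit a concrete parameter that fails to be integral for even $k>2$, and you have not. The standard (and simplest) way to close this is Neumann's theorem, which the paper quotes as Theorem \ref{thm:rhoodd} (\cite[Theorem 3.4]{lemmens}): if $|\Omega|>2d$ then $\rho=1/\kappa$ is an odd integer; its proof needs a genuinely new ingredient beyond your steps, namely a mod $2$ argument on the characteristic polynomial of the Seidel matrix (an eigenvalue of multiplicity exceeding $n/2$ is a rational, hence integral, algebraic integer, and reduction of $S\equiv J-I \pmod 2$ forces it to be odd). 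Since $d(d+1)/2>2d$ for $d\geq 4$, this gives that $\sqrt{d+2}=\rho$ is odd whenever $d\geq 4$, while $d=2,3$ give $d+2=4,5$, completing the statement. Without this (or an equivalent parity argument), your proof establishes only ``$d+2=5$ or a perfect square''.
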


\begin{Thm}\label{thm:relbd}\cite[Theorem 3.6]{lemmens} Let $\Omega$ be a set of equiangular lines in $\R^d$ with common angle $\kappa$ and suppose that $\kappa^2d<1$. Then
$$|\Omega|\leq \frac{d-\kappa^2d}{1-\kappa^2d}.$$ Moreover, assuming $\kappa^2d<1$, this bound is saturated if and only if  the two-graph $(\Omega,\C)$ is regular.
\end{Thm}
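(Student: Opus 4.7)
The plan is to analyse the Gram matrix and the associated Seidel matrix via linear algebra. To begin, I would pick unit vectors $\alpha_{1},\ldots,\alpha_{n}$ representing the $n$ lines of $\Omega$ and form the Gram matrix $G$ with $G_{ii}=1$ and $G_{ij}=\pm\kappa$ for $i\neq j$; writing $G=I+\kappa S$ introduces the \emph{Seidel matrix} $S$, which has $0$s on the diagonal and $\pm 1$s off it. Since $G$ is the Gram matrix of $n$ vectors in $\R^{d}$, it is positive semi-definite of rank at most $d$, so the eigenvalues of $S$ all satisfy $\lambda\geq -1/\kappa$, with $-1/\kappa$ occurring with multiplicity $m\geq n-d$. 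Letting $\mu_{1},\ldots,\mu_{n-m}$ denote the remaining eigenvalues of $S$ and using the trace identities $\operatorname{tr}(S)=0$ and $\operatorname{tr}(S^{2})=n(n-1)$, one obtains $\sum_{i}\mu_{i}=m/\kappa$ and $\sum_{i}\mu_{i}^{2}=n(n-1)-m/\kappa^{2}$. A direct application of Cauchy--Schwarz, $(\sum_{i}\mu_{i})^{2}\leq(n-m)\sum_{i}\mu_{i}^{2}$, followed by a routine rearrangement yields $m\leq n(n-1)\kappa^{2}/(1+(n-1)\kappa^{2})$, and combining with $m\geq n-d$ and simplifying (using $\kappa^{2}d<1$ to divide by $1-\kappa^{2}d$) gives the claimed bound.

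For the equality case, saturation of the bound forces both $m=n-d$ exactly and equality in Cauchy--Schwarz, which in turn forces all $\mu_{i}$ to be equal to a common value $\rho$. Thus $S$ has exactly two distinct eigenvalues, $-1/\kappa$ with multiplicity $n-d$ and $\rho=\kappa(n-1)$ with multiplicity $d$, the value $\rho$ being determined by $\operatorname{tr}(S)=0$ together with $(S^{2})_{ii}=n-1$. In particular $S$ satisfies the quadratic relation $S^{2}=(\rho-1/\kappa)S+(n-1)I$; conversely, if $S$ satisfies such a two-term relation, then it has at most two eigenvalues, one of which must equal $-1/\kappa$ by the rank bound on $G$, and tracing the Cauchy--Schwarz argument backwards recovers saturation of the bound.

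Finally, to connect the two-eigenvalue property of $S$ with regularity of the two-graph $(\Omega,\C)$, I would use the computation that for $i\neq j$,
\[(S^{2})_{ij}=\sum_{k}S_{ik}S_{kj}=S_{ij}\bigl((n-2)-2a(i,j)\bigr),\]
where $a(i,j):=|\{k:\{i,j,k\}\in\C\}|$, exploiting the fact that $\{i,j,k\}\in\C$ is equivalent to $S_{ij}S_{ik}S_{jk}=-1$. Since $(S^{2})_{ii}=n-1$ automatically, it follows that $S^{2}=c_{1}S+c_{2}I$ if and only if $a(i,j)$ is constant in $(i,j)$, which is precisely the condition that $(\Omega,\C)$ is a $2$-$(n,3,a)$ design, \ie a regular two-graph. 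The main obstacle is getting the bookkeeping right in this last identity --- in particular the conversion between $\C$-membership and the sign of products of Seidel entries, which depends on a consistent choice of representing unit vectors --- after which everything reduces to elementary linear algebra.
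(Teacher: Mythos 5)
Your derivation of the bound and of the direction ``saturation implies regularity'' is correct, and it is exactly the classical Seidel-matrix eigenvalue argument of Lemmens and Seidel; the paper does not reprove this theorem but cites it, so there is no in-paper proof to compare against. The trace identities, the Cauchy--Schwarz step, the identification of the equality case with a two-eigenvalue Seidel matrix, and the bookkeeping identity $(S^{2})_{ij}=S_{ij}\bigl((n-2)-2a(i,j)\bigr)$ (with your sign convention $G=I+\kappa S$, so that $\{i,j,k\}\in\C$ iff $S_{ij}S_{ik}S_{jk}=-1$) are all right.

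The gap is in your converse. From regularity you correctly get $S^{2}=c_{1}S+(n-1)I$, hence at most two eigenvalues, but the claim that ``one of which must equal $-1/\kappa$ by the rank bound on $G$'' does not follow: $\mathrm{rank}(G)\leq d$ forces $-1/\kappa$ to be an eigenvalue only when $n>d$, and, more seriously, tracing Cauchy--Schwarz backwards requires the multiplicity of $-1/\kappa$ to be \emph{exactly} $n-d$, whereas the rank bound only gives at least $n-d$. Regularity alone cannot supply this: the six icosahedral lines at angle $\arccos(1/\sqrt{5})$, viewed inside $\R^{4}$, have a regular two-graph and satisfy $\kappa^{2}d=4/5<1$, yet $6$ falls far short of the relative bound $16$ in that dimension. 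So the ``if'' direction holds only under the (implicit) hypothesis that the lines span $\R^{d}$, equivalently that the Gram matrix has rank exactly $d$, so that the eigenvalue $-1/\kappa$ has multiplicity exactly $n-d$; with that in hand your two trace identities immediately give $n(1-\kappa^{2}d)=d(1-\kappa^{2})$, i.e.\ saturation. You should either state this spanning hypothesis explicitly (it is satisfied wherever the paper invokes this direction, e.g.\ when $\Inc(\Omega)=d$, since an incoherent $d$-set is linearly independent) or justify the multiplicity claim, which, as the example above shows, cannot be done from regularity alone.
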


\begin{Thm}\label{thm:rhoodd}\cite[Theorem 3.4 - Due to P. Neumann]{lemmens} Let $\Omega$ be a set of equiangular lines in $\R^d$ with common angle $\kappa$, 
and let $\rho=\kappa^{-1}$. If $|\Omega|>2d$, then $\rho$ is an odd integer.
\end{Thm}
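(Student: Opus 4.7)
The plan is to work with the Seidel matrix of $\Omega$ and analyse its characteristic polynomial, first over $\mathbf{Q}$ to force $\rho$ into $\mathbf{Z}$, and then over $\mathbf{F}_2$ to force $\rho$ to be odd.

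First I would choose a unit vector for each line to obtain the Gram matrix $G$, which has $1$s on the diagonal and $\pm\kappa$ off-diagonal. Writing $G=I+\kappa S$ gives a symmetric $n\times n$ matrix $S$ with $0$ on the diagonal and $\pm 1$ elsewhere (a Seidel matrix). Because the vectors span a space of dimension at most $d$, the matrix $G$ is positive semidefinite of rank at most $d$, so $S$ has $-\rho$ as an eigenvalue of multiplicity $m\geq n-d$. The hypothesis $n=|\Omega|>2d$ therefore yields $m>d\geq n/2$, i.e.\ more than half the eigenvalues of $S$ equal $-\rho$.

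Next I would show that $\rho$ is a rational (hence an) integer. Since $S$ has integer entries, $-\rho$ is an algebraic integer, and any Galois conjugate $\mu\neq -\rho$ of $-\rho$ would be an eigenvalue of $S$ with the same multiplicity $m$. But then the total multiplicity of $-\rho$ and its conjugates would be at least $2m>n$, contradicting the fact that $S$ has only $n$ eigenvalues. So $-\rho\in\mathbf{Q}$, and being an algebraic integer it lies in $\mathbf{Z}$.

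To upgrade integrality to oddness I would reduce modulo $2$. All off-diagonal entries of $S$ are $\pm 1$, so $S\equiv J-I\pmod 2$, where $J$ is the all-ones matrix. Since $J-I$ has eigenvalues $n-1$ (simple) and $-1$ (with multiplicity $n-1$), the characteristic polynomial satisfies
\[
\chi_S(x)\equiv (x+1)^{n-1}\bigl(x-(n-1)\bigr)\pmod 2.
\]
On the other hand $(x+\rho)^m$ divides $\chi_S(x)$ in $\mathbf{Z}[x]$, so the same divisibility holds modulo $2$. If $\rho$ were even then $(x+\rho)\equiv x\pmod 2$, forcing $x^m$ to divide the right-hand side mod $2$; since $(x+1)^{n-1}$ is coprime to $x$ and $x-(n-1)$ contributes at most one factor of $x$, we would need $m\leq 1$, contradicting $m>d\geq 1$. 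Hence $\rho$ is odd.

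The only step that requires genuine care is the Galois-conjugate count in the second paragraph: the inequality $m\geq n-d$ together with $n>2d$ must be chained tightly enough to rule out any non-rational conjugate. The mod-$2$ step is essentially a bookkeeping exercise once the shape of $\chi_S$ modulo $2$ is identified from $S\equiv J-I$.
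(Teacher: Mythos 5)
Your proof is correct and follows essentially the same route as the classical argument of P.~Neumann that the paper cites (it quotes \cite[Theorem 3.4]{lemmens} without reproducing the proof): first rationality, hence integrality, of $\rho$ because an eigenvalue of the integral Seidel matrix $S$ with multiplicity exceeding $n/2$ cannot admit a distinct Galois conjugate of equal multiplicity, then oddness via the reduction $S\equiv J-I\pmod 2$ and the factorisation of the characteristic polynomial of $J-I$. One small slip in the write-up: the chain ``$m>d\geq n/2$'' should read $m\geq n-d>n/2$ (since $n>2d$ gives $d<n/2$); the facts you actually use, namely $2m>n$ and $m\geq 2$, are unaffected.
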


\section{Examples}\label{sec:examples}
In this section we present the known examples of sets of equiangular lines that saturate the relative and incoherence bounds, which are
the maximal sets of equiangular lines in dimensions $d=2,3,6,7$ and $23$. 
The maximum number of equiangular lines in $\R^2$ is $3$. These can be constructed by taking 
a regular hexagon centred at the origin and drawing the three lines that connect diagonally opposite vertices.
Any incoherent set of equiangular lines in $\R^d$ forms a linearly independent set, and so $3$ equiangular lines in $\R^2$ cannot form an incoherent set. 
Thus, by definition, the maximum size of any incoherent set in this case is $d=2$.

In $\R^3$ six equiangular lines can be constructed by drawing lines through opposite vertices of an icosahedron. 
The icosahedron has $12$ vertices, $30$ edges and $20$ triangular faces. The set of $3$ lines formed from the vertices of one of the triangular faces of the icosahedron is an incoherent set.

For dimensions $d=6,7$ and $23$, we construct $M(d)-d$ lines of the form $v(B)$ for some block in
a quasi-symmetric $2$-design, and $d$ lines of the form $v(i)$ for $i=1,\ldots,d$ using the parameters of this design.

Let $(\P,\B)$ be a quasi-symmetric $2$-$(d,k,\lambda ;s_1,s_2)$ design, and $$\Delta_1=k^2-\frac{d(s_1+s_2)}{2}$$
For each $B\in\B$ let $v(B)\in\R^d$ whose $j$th entry is $$v(B)_j=\begin{cases}{d-k}+\sqrt{\Delta_1}&\textnormal{ if $j\in B$ }\\
{-k}+\sqrt{\Delta_1}&\textnormal{ if $j\in\P\backslash B$ }
\end{cases}$$
Now let
$$\Delta_2=(k-s_1)+\frac{d(s_1-s_2)}{2}$$
and for $i\in\P$ let $v(i)\in\R^d$ with
$$v(i)_j=\begin{cases}(s_1-s_2)(d-1)-\sqrt{\Delta_2}&\textnormal{ if $j=i$ }\\
-(s_1-s_2)-\sqrt{\Delta_2}&\textnormal{ if $j\neq i$ }
\end{cases}$$
For $d=6$, let $(\P,\B)$ be the unique $2$-$(6,3,2;2,1)$ quasi-symmetric design; for $d=7$, let $(\P,\B)$ be the 
$2$-$(7,2,1;1,0)$ quasi-symmetric design, that is, $\B$ is the set of all $2$-sets of $\P=\{1,\ldots,7\}$; for $d=23$, let $(\P,\B)$ be the $S(4,7,23)$ Steiner system, 
which is a $4$-$(23,7,1;3,1)$ quasi-symmetric design. With these specified quasi-symmetric designs, we find that
$$\Omega=\{v(B)\,|\,B\in\B\}\cup \{v(i)\,|\,i\in\P\}$$
forms a set of $M(d)$ vectors in $\R^d$ that span equiangular lines in the respective dimension, and moreover that
$$\Gamma=\{v(i)\,|\,i\in\P\}$$
is an incoherent subset of size $d$.

\begin{Rem}\label{rem:uniiso}
Recall that sets of equiangular lines that saturate the absolute bound are equivalent to tight spherical $5$-designs.
In particular, the only known examples of tight spherical $5$-designs can be found
by taking the intersection points of the unit sphere with the known sets of equiangular lines that saturate the absolute upper bound in dimensions $d=2,3,7$ or $23$. It is further known
that the tight spherical $5$-designs in these dimension $2$, $3$ and $7$ are unique up to isometry 
(for $d=2,3$, see \cite[Examples 5.13 \& 5.16]{delgoeseid}; for $d=7$, see \cite[Theorem 11]{bannaisloane}).
For $d=23$, the only claim of this fact that we are aware of (see \cite{bannaivenkov}) relies on the uniqueness the regular two-graph on $276$ points. 
In \cite{seidelsurvey} Seidel proved a one to one correspondence between dependent sets of $n$ equiangular lines (up to isometry) and 
two-graphs on $n$ points. Hence the uniqueness of the regular two-graph on $276$ points \cite{276lines} implies that the corresponding set of $276$ equiangular lines in $\R^{23}$ is 
unique up to isometry. Indeed, Taylor proved that the regular two-graphs on $n=6,16,28$ points are also unique \cite{taylor}, proving that the maximal sets of equiangular lines in dimensions $d=3,6,7$ are unique up to isometry. 
\end{Rem}

\section{Regular two-graphs, Maximal Incoherent Subsets and $2$-designs}\label{sec:maxinc}

In this section we construct various $2$-designs from regular two-graphs and maximal incoherent subsets of equiangular lines.
Of particular importance is Lemma \ref{lem:setsum}, which gives some necessary conditions for maximal incoherent subsets of regular two-graphs.
We use this to prove, in Theorem \ref{thm:eqquasi}, that for certain sets of equiangular lines which saturate the relative and incoherence bounds, a quasi-symmetric
design structure must be present in any incoherent subset of $d$ lines. 

\subsection{Regular two-graphs and $2$-designs}
Let $(\Omega,\C)$ be a regular two-graph. We denote by $\C_0$ the set of coherent $4$-sets of $(\Omega,\C)$, 
$\C_2$ the set of incoherent $4$-sets of $(\Omega,\C)$, and $\C_1=\Omega^{\{4\}}\backslash(\C_0\cup \C_2)$. The definition
of a two-graph implies that each element $B\in\C_1$ has exactly two elements of $\C$ as subsets, that is, $|B^{\{3\}}\cap\C|=2$.

\begin{Prop}\label{prop:32des} Let $(\Omega,\C)$ be a regular two graph with parameters $(n,a,b)$, and let $\C_i$ for $i=0,1,2$ be defined as above.  Then
\begin{itemize}
\item[i)] $(\Omega,\C_0)$ forms a $2-(n,4,ab/2)$ design;
\item[ii)] $(\Omega,\C_1)$ is a $2-(n,4,3aa^*/2)$ design; and 
\item[iii)] $(\Omega,\C_2)$ forms a $2-(n,4,a^*b^*/2)$ design,
\end{itemize}
where $a^*=n-a-2$ and $b^*=n/2-b-3$.
\end{Prop}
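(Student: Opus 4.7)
The plan is to verify each of the three assertions by double-counting against a fixed $2$-subset $\{x,y\}\subseteq\Omega$, so that for each $i\in\{0,1,2\}$ the count of elements of $\C_i$ containing $\{x,y\}$ turns out to be independent of $\{x,y\}$ and equal to the claimed value.

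For (i), I would count ordered pairs $(T,B)$ with $T\in\C$, $\{x,y\}\subseteq T\subseteq B$, and $B\in\C_0$. Since $(\Omega,\C)$ is a $2$-$(n,3,a)$ design there are exactly $a$ choices for $T$, and by the regularity hypothesis each coherent triple is contained in $b$ coherent $4$-sets, so there are $ab$ such pairs. Counting instead by $B$: every $B\in\C_0$ containing $\{x,y\}$ has exactly two $3$-subsets passing through $\{x,y\}$, both of which lie in $\C$ because $B$ is coherent. Hence each such $B$ is counted twice, giving $\lambda_0=ab/2$.

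For (iii), I appeal to the complementary two-graph $(\Omega,\C^*)$ with $\C^*=\Omega^{\{3\}}\setminus\C$, which, as recalled in the discussion preceding the proposition, is regular with parameters $(n,a^*,b^*)$. Its coherent $4$-sets are precisely the $4$-sets having no $3$-subset in $\C$, namely $\C_2$. Applying the argument of (i) to $(\Omega,\C^*)$ then yields $\lambda_2=a^*b^*/2$.

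For (ii), every $4$-set containing $\{x,y\}$ lies in exactly one of $\C_0$, $\C_1$, $\C_2$, so
$$\lambda_1=\binom{n-2}{2}-\frac{ab}{2}-\frac{a^*b^*}{2},$$
and it remains only to check this equals $3aa^*/2$. This is a purely algebraic identity: substituting $n=3a-2b$, $a^*=n-a-2$, and $b^*=n/2-b-3$ and expanding, every monomial in $a$ and $b$ cancels. The only real obstacle is arithmetic bookkeeping in this last step; there is no substantive combinatorial content beyond the observation already made in (i).
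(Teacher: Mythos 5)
Your proposal is correct and follows essentially the same route as the paper: the double count of (coherent triple, coherent $4$-set) pairs for (i), passing to the complementary regular two-graph for (iii), and obtaining (ii) by subtracting from $\binom{n-2}{2}$ and simplifying. The only (welcome) difference is that you state explicitly that the final identity $\binom{n-2}{2}-\frac{ab}{2}-\frac{a^*b^*}{2}=\frac{3aa^*}{2}$ uses the relation $n=3a-2b$, which the paper leaves implicit.
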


\begin{proof}
Let $\alpha,\beta\in\Omega$. Then there exist exactly $a$ coherent triples that contain $\alpha,\beta$, and for each of these coherent triples, there exist
exactly $b$ coherent $4$-sets that contains the coherent triple. 
Thus we have just counted $ab$ coherent $4$-sets that contain $\alpha$ and $\beta$. However, we have double counted, because for every coherent $4$-set $B$ that contains
$\alpha$ and $\beta$, there are exactly two coherent triples in $B$ that contain $\alpha$ and $\beta$. 
Hence there are a total of $ab/2$ coherent $4$-sets that contain $\alpha,\beta$, proving i). 
As the complement of a regular two-graph is also a regular two-graph with parameters $(n,a^*,b^*)$, where $a^*=n-a-2$
and $b^*=n/2-b-3$, iii) now follows. Finally, ii) holds as the $(\Omega,\C_1)$ is necessarily a $2-(n,4,\lambda)$ design where
$$\lambda=\frac{(n-2)(n-3)}{2}-\frac{ab}{2}-\frac{a^*b^*}{2}=\frac{3aa^*}{2}.$$
\end{proof}

Given a two-graph $(\Omega,\C)$, for $\alpha,\beta\in\Omega$ we define 
$$S_{\alpha\beta}=\{\gamma\in\Omega\,|\,\{\alpha,\beta,\gamma\}\in\C\}.$$
Obviously if $(\Omega,\C)$ is a regular two-graph with parameters $(n,a,b)$, each $S_{\alpha\beta}$ contains exactly $a$ elements.
In fact, a design structure is present in this case.

\begin{Thm}\label{thm:desint1}
Let $(\Omega,\C)$ be a regular two-graph with parameters $(n,a,b)$. Then $(\Omega,\B)$ is a $2$-$(n,a,a(a-1)/2)$ design, where
$$\B=\{S_{\alpha\beta}\,|\,\alpha,\beta\in\Omega\}.$$ Moreover, \begin{equation}\label{eq:desint}|S_{\alpha\beta}\cap S_{\alpha\gamma}|=
\begin{cases} b&\textnormal{ if $\{\alpha,\beta,\gamma\}$ is a coherent $3$-set}\\
a/2&\textnormal{ if $\{\alpha,\beta,\gamma\}$ is an incoherent $3$-set}
\end{cases}\end{equation}
\end{Thm}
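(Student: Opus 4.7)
My plan is to establish the intersection formula (\ref{eq:desint}) first and then deduce the $2$-design structure as a consequence; the underlying combinatorial tool throughout is the two-graph axiom, that every 4-set contains an even number of coherent 3-subsets.

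For the coherent case of (\ref{eq:desint}), I would observe that if $\{\alpha,\beta,\gamma\}$ is coherent then any $\delta\in S_{\alpha\beta}\cap S_{\alpha\gamma}$ lies outside $\{\alpha,\beta,\gamma\}$ and yields a 4-set $\{\alpha,\beta,\gamma,\delta\}$ with three of its 3-subsets coherent, forcing the fourth subset $\{\beta,\gamma,\delta\}$ (and hence the whole 4-set) to be coherent; conversely each coherent 4-set extending $\{\alpha,\beta,\gamma\}$ produces a valid $\delta$, so the count is $b$ by definition of the parameter. For the incoherent case the key observation is that $S_{\alpha\beta}$ splits as the \emph{disjoint} union $(S_{\alpha\beta}\cap S_{\alpha\gamma})\sqcup(S_{\alpha\beta}\cap S_{\beta\gamma})$: for any $\delta\in S_{\alpha\beta}$ the 4-set $\{\alpha,\beta,\gamma,\delta\}$ contains $\{\alpha,\beta,\delta\}\in\C$ and $\{\alpha,\beta,\gamma\}\notin\C$, so the two-graph axiom forces exactly one of $\{\alpha,\gamma,\delta\}$ and $\{\beta,\gamma,\delta\}$ to lie in $\C$, placing $\delta$ in exactly one of the two intersections. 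Writing $A=|S_{\alpha\beta}\cap S_{\alpha\gamma}|$, $B=|S_{\alpha\beta}\cap S_{\beta\gamma}|$ and $C=|S_{\alpha\gamma}\cap S_{\beta\gamma}|$, and applying this decomposition to each of $S_{\alpha\beta}$, $S_{\alpha\gamma}$ and $S_{\beta\gamma}$ in turn, produces the system $A+B=A+C=B+C=a$, whence $A=B=C=a/2$.

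For the design structure, blocks have size $a$ by regularity, so only the pair-containment count requires work. I would fix a pair $\{\gamma,\delta\}$ and count ordered pairs $(\alpha,\beta)$ with $\gamma,\delta\in S_{\alpha\beta}$. For each fixed $\alpha\notin\{\gamma,\delta\}$ the number of valid $\beta$ equals $|S_{\alpha\gamma}\cap S_{\alpha\delta}|$, which by (\ref{eq:desint}) is $b$ when $\alpha\in S_{\gamma\delta}$ (there are $|S_{\gamma\delta}|=a$ such $\alpha$) and $a/2$ otherwise ($n-2-a$ such). The total ordered count is therefore $ab+(n-2-a)(a/2)$; substituting the relation $n=3a-2b$ recalled in Section~\ref{sec:prelims} collapses this to $a(a-1)$, and halving for unordered pairs yields the required $\lambda=a(a-1)/2$. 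The most delicate step is the incoherent case of the intersection formula: a crude double count of the 4-sets containing $\{\alpha,\beta,\gamma\}$ only delivers $A+B+C=3a/2$, which is insufficient on its own, and the disjoint-decomposition observation, which combines the two-graph axiom with the incoherence hypothesis, is precisely what upgrades this to three independent equalities and so pins each intersection to $a/2$.
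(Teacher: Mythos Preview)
Your proposal is correct, but the route differs from the paper's in two respects. First, the order is reversed: the paper establishes the design structure \emph{before} the intersection formula, counting the blocks through a given pair $\{\alpha,\beta\}$ by splitting into the coherent-4-set and the non-coherent cases and invoking Proposition~\ref{prop:32des} (and its complementary version) for each count; you instead prove (\ref{eq:desint}) first and then feed it into a double count over choices of $\alpha$, which avoids any appeal to Proposition~\ref{prop:32des}. Second, for the incoherent case of (\ref{eq:desint}) the paper argues via the complementary two-graph, observing that $\delta\notin S_{\beta\gamma}$ iff $\delta\in S_{\alpha\beta}\cap S_{\alpha\gamma}$ or $\delta\notin S_{\alpha\beta}\cup S_{\alpha\gamma}$, and then using the complementary parameter $b^*$ to evaluate $|S_{\alpha\beta}\cap S_{\alpha\gamma}|=n-3-a-b^*=a/2$; your symmetric system $A+B=A+C=B+C=a$ obtained from the three disjoint decompositions is arguably cleaner and more self-contained, as it uses only the two-graph axiom and the regularity constant $a$ (the identity $n=3a-2b$ enters only at the very end of your design-structure count). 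Both approaches ultimately rest on the same parity constraint for 4-sets, but yours packages it more economically.
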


\begin{proof}
Let $\alpha,\beta\in\Omega$, and note that $\alpha\notin S_{\alpha\gamma'}$ for
any $\gamma'\in\Omega\backslash\{\alpha\}$, and similarly for $\beta$. Thus if $\alpha,\beta\in S_{\gamma\delta}$ then $\{\alpha,\beta,\gamma,\delta\}$ is a $4$-set. 
As $\{\alpha,\gamma,\delta\}\in\C$ and $\{\beta,\gamma,\delta\}\in\C$, we conclude that either 
$$\gamma,\delta\in S_{\alpha\beta}\,\,\,\textnormal{ or }\,\,\,\gamma,\delta\notin S_{\alpha\beta}.$$

If $\gamma,\delta\in S_{\alpha\beta}$ then $\{\alpha,\beta,\gamma,\delta\}$ is a coherent $4$-set. Moreover, note that $\alpha,\beta\in S_{\gamma'\delta'}$
for any coherent $4$-set $\{\alpha,\beta,\gamma',\delta'\}$, and by Proposition \ref{prop:32des}, there are $ab/2$ coherent $4$-sets that contain $\alpha,\beta$.

If $\gamma,\delta\notin S_{\alpha\beta}$ then $\{\alpha,\beta,\gamma\}$ and $\{\alpha,\beta,\delta\}$ are incoherent $3$-sets and $\{\alpha,\beta,\gamma,\delta\}$ is not an
incoherent $4$-set. As $\alpha,\beta$ are contained in $a^*$ incoherent $3$-sets, there are a total of $a^*(a^*-1)/2$ $4$-sets $\{\alpha,\beta,\gamma',\delta'\}$ that contain $\alpha,\beta$ such that
$\gamma',\delta'\notin S_{\alpha\beta}$. Of these, $a^*b^*/2$ are incoherent $4$-sets by Proposition \ref{prop:32des}. Thus there are
$$a^*(a^*-1)/2-a^*b^*/2=aa^*/4$$
$4$-sets with the desired property. Hence there are exactly $$ab/2+aa^*/4=a(a-1)/2$$ 
sets $S_{\gamma\delta}$ that contain $\alpha,\beta$, proving the first part of the result. 

Suppose that $\{\alpha,\beta,\gamma\}$ is a coherent $3$-set. Then the defining two-graph properties imply that
$$\delta\in S_{\alpha\beta}\cap S_{\alpha\gamma} \iff \{\alpha,\beta,\gamma,\delta\}\textnormal{ is a coherent $4$-set}.$$
As $(\Omega,\C)$ is a regular two-graph, there are exactly $b$ elements $\delta\in\Omega$ such that $\{\alpha,\beta,\gamma,\delta\}$ is a coherent $4$-set, proving the first intersection result.

Now suppose that $\{\alpha,\beta,\gamma\}$ is an incoherent $3$-set. Then for $\delta\in\Omega\backslash\{\alpha,\beta,\gamma\}$,
$$\delta\notin S_{\beta\gamma} \iff \delta\in S_{\alpha\beta}\cap S_{\alpha\gamma}\textnormal{ or } \delta\notin S_{\alpha\beta}\cup S_{\alpha\gamma}.$$
By considering the complementary two-graph, we note that there exist $b^*$ elements $\delta\in\Omega\backslash\{\alpha,\beta,\gamma\}$ such that 
$\delta\notin S_{\alpha\beta}\cup S_{\alpha\gamma}$. Hence
$$|S_{\alpha\beta}\cap S_{\alpha\gamma}|=n-3-a-b^*=a/2.$$
\end{proof}

\subsection{Previous results on Maximal Incoherent Subsets}\label{sec:taylor}
Before we prove our results on maximal incoherent subsets of equiangular lines, we mention some previous work on this topic. 
In \cite{taylor} Taylor investigated maximal coherent sets of (regular) two-graphs. His results are relevant for our purposes because any maximal coherent set of
a two-graph is a maximal incoherent set of the complementary two-graph. We present his results (in the language of equiangular lines) that we use in the sequel. First we state a well known
fact about an incoherent set of equiangular lines in $\R^d$.  

\begin{Lem}\label{lem:incupbd} Let $\Gamma$ be an incoherent set of equiangular lines in $\R^d$. Then $\Gamma$ forms a linearly independent set. In particular, $|\Gamma|\leq d$.
\end{Lem}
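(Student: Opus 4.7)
The plan is to reduce to the case where all pairwise inner products among representing unit vectors of lines in $\Gamma$ are equal to the same positive value $\kappa$, and then read off linear independence from a Gram matrix computation.

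First I would dispense with the trivial cases $|\Gamma|\leq 2$, where linear independence of $1$ or $2$ distinct lines is immediate. So assume $|\Gamma|\geq 3$. For such a $\Gamma$ the incoherence hypothesis forces the common angle parameter $\kappa$ to be strictly positive (otherwise any triple product would vanish), and since the lines are distinct we automatically have $\kappa<1$.

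Next I would perform a sign-switching on the unit vectors. Fix a distinguished line $\alpha_{1}\in\Gamma$ with chosen unit vector, and for every other line $\alpha_{i}\in\Gamma$ replace its unit vector by its negative whenever $(\alpha_{1},\alpha_{i})<0$. After this renormalisation one has $(\alpha_{1},\alpha_{i})=\kappa>0$ for all $i\neq 1$. Now for any two indices $i,j\neq 1$, the incoherence condition applied to the triple $\{\alpha_{1},\alpha_{i},\alpha_{j}\}$ reads
\[
(\alpha_{1},\alpha_{i})(\alpha_{1},\alpha_{j})(\alpha_{i},\alpha_{j})>0,
\]
and since the first two factors are the positive number $\kappa$, we deduce $(\alpha_{i},\alpha_{j})=\kappa$ as well. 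Thus after switching, every pairwise inner product of the chosen representatives equals $\kappa$.

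Set $n=|\Gamma|$ and consider the Gram matrix $G$ of the chosen representatives. By the previous step, $G=(1-\kappa)I_{n}+\kappa J_{n}$, where $J_{n}$ is the all-ones matrix. The eigenvalues of $G$ are $1+(n-1)\kappa$ (on the all-ones vector) and $1-\kappa$ (with multiplicity $n-1$), both strictly positive because $0<\kappa<1$. Hence $G$ is positive definite, and in particular non-singular, so the chosen unit vectors are linearly independent. Since they lie in $\R^{d}$, this gives $|\Gamma|=n\leq d$, completing the proof.

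The only mild subtlety is justifying that one may indeed switch signs consistently; this is straightforward but is the one place where the incoherence hypothesis is used essentially, so I would make sure to state the switching step explicitly rather than leave it implicit.
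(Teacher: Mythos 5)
Your proof is correct: the sign-switching relative to a fixed line, the resulting constant Gram matrix $(1-\kappa)I+\kappa J$ with eigenvalues $1-\kappa>0$ and $1+(n-1)\kappa>0$, and the conclusion of positive definiteness constitute the standard argument, and the switching step is harmless since the incoherence condition is invariant under replacing a unit vector by its negative. The paper itself states this lemma as a well-known fact and gives no proof, so there is nothing to compare against; your write-up supplies exactly the expected justification.
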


Let $\Omega$ be a set of equiangular lines and $\Gamma\subseteq\Omega$ be a maximal incoherent subset. Then by definition, for each
$\gamma\in\Omega\backslash\Gamma$ there exist $\alpha_1,\alpha_2\in\Gamma$ such that $\{\gamma,\alpha_1,\alpha_2\}$ is a coherent set. 
Taylor \cite{taylor} used this fact to define the following two sets:
\begin{equation}\label{eq:defsets}\Gamma_i(\gamma)=\{\delta\in\Gamma\,|\,\{\gamma,\alpha_i,\delta\}\textnormal{ is a coherent $3$-set}\}\end{equation}
for $i=1,2$. One can deduce from the defining properties of two-graphs that these sets are independent of the choice of $\alpha_1,\alpha_2$, and that they 
partition $\Gamma$. We label the sets so that 
$$|\Gamma_1(\gamma)|\leq|\Gamma_2(\gamma)|$$
The following observation is relevant in the sequel.

\begin{Lem}\label{lem:simple}
Consider the two-graph $(\Omega,\C)$ and let $\Gamma$ be a maximal incoherent subset of $\Omega$, $\gamma\in\Omega\backslash\Gamma$ and $\alpha,\beta\in\Gamma$. Then
$$\{\gamma,\alpha,\beta\}\notin\C\iff \alpha,\beta\in\Gamma_i(\gamma)$$
for $i=1$ or $2$.
\end{Lem}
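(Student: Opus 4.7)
My plan is to apply the defining property of a two-graph — every $4$-subset of $\Omega$ contains an even number of elements of $\C$ as subsets — to a carefully chosen $4$-set built from $\gamma$, one of the designated elements $\alpha_1\in\Gamma$, and the two points $\alpha,\beta\in\Gamma$ under consideration.

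Assume first that $\alpha,\beta,\alpha_1$ are pairwise distinct. I would consider the $4$-set $\{\gamma,\alpha_1,\alpha,\beta\}$ and catalogue which of its four $3$-subsets can lie in $\C$. The triple $\{\alpha_1,\alpha,\beta\}$ is automatically not in $\C$ since $\Gamma$ is incoherent. By the definition in \eqref{eq:defsets}, $\{\gamma,\alpha_1,\alpha\}\in\C$ iff $\alpha\in\Gamma_1(\gamma)$, and likewise $\{\gamma,\alpha_1,\beta\}\in\C$ iff $\beta\in\Gamma_1(\gamma)$. The two-graph axiom then forces the parity of the remaining triple $\{\gamma,\alpha,\beta\}$: namely, $\{\gamma,\alpha,\beta\}\notin\C$ if and only if $\alpha$ and $\beta$ both belong to $\Gamma_1(\gamma)$ or both fail to. Since the proof preceding the lemma establishes that $\Gamma_1(\gamma)\sqcup\Gamma_2(\gamma)=\Gamma$, the latter condition is equivalent to $\alpha,\beta\in\Gamma_2(\gamma)$, and the desired equivalence follows.

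The remaining degenerate cases are routine. If $\alpha=\alpha_1$ (and the roles of $\alpha,\beta$ are symmetric), then by the partition convention $\alpha_1\in\Gamma_2(\gamma)$, and the statement reduces directly to the definition of $\Gamma_1(\gamma)$ applied to $\beta$. One can also verify independence of the choice of $\alpha_1$ versus $\alpha_2$ by running the same $4$-set argument with $\alpha_2$ in place of $\alpha_1$, which is consistent because $\{\gamma,\alpha_1,\alpha_2\}\in\C$ contributes the required parity swap.

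The main obstacle is merely bookkeeping: ensuring the convention placing $\alpha_1,\alpha_2$ themselves into the partition $\Gamma_1(\gamma)\sqcup\Gamma_2(\gamma)$ is compatible with the equivalence being proved. Beyond that, the argument is a single application of the two-graph parity axiom, and no deeper machinery is required.
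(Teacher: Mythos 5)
Your proof is correct and follows essentially the same route as the paper: both arguments apply the two-graph parity axiom to a $4$-set $\{\gamma,\delta,\alpha,\beta\}$ with $\delta\in\Gamma$ (so that $\{\delta,\alpha,\beta\}\notin\C$ by incoherence of $\Gamma$) and read off the equivalence from the partition $\Gamma=\Gamma_1(\gamma)\sqcup\Gamma_2(\gamma)$; the only difference is that you fix $\delta=\alpha_1$, the defining element, which forces the small degenerate case $\alpha=\alpha_1$ that you handle correctly.
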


\begin{proof} This result holds as $\alpha,\beta\in\Gamma_i(\gamma)$ for $i=1$ or $2$
if and only if there exists $\delta\in\Gamma\backslash\Gamma_i(\gamma)$ such
that $\{\gamma,\delta,\beta\}\in\C$ and $\{\gamma,\delta,\alpha\}\in\C$ if and only if $\{\gamma,\alpha,\beta\}\notin\C$ (by the two-graph property).
\end{proof}

By supposing that there exists a maximal incoherent subset that saturates the upper bound in Lemma \ref{lem:incupbd}, Taylor proves the following results.
(In the statements of the theorems by Taylor, he assumes that $(\Omega,\C)$ is a regular two-graph. 
However the regularity property is not used in his proofs, so we present the more general statements.)

\begin{Prop}\label{prop:tayeq}\cite[Theorem 5.4]{taylor} Let $\Omega$ be a set of equiangular lines in $\R^d$ with common angle $\kappa$, 
and let $\Gamma\subseteq\Omega$ be an incoherent set of $d$ lines. Then for each 
$\gamma\in\Omega\backslash\Gamma$,
$|\Gamma_1(\gamma)|$ and $|\Gamma_2(\gamma)|$ are the solutions to the quadratic equation
\begin{equation}\label{eq:taypoly}4x^2-4dx+(\rho-1)^2(d+\rho)\end{equation}
where $\rho=\kappa^{-1}$. In particular, 
\begin{equation}\label{eq:tayeq}|\Gamma_j(\gamma)|=\frac{d+(-1)^j\sqrt{d^2-(\rho-1)^2(d+\rho)}}{2}.\end{equation}
for $j=1,2$.
\end{Prop}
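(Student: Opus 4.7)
The plan is to reduce the statement to a norm computation after choosing a convenient set of representative unit vectors. Since $\Gamma$ is incoherent, for any triple of lines in $\Gamma$ the product of the three pairwise inner products is strictly positive. This lets me select unit vectors $e_1,\ldots,e_d$ representing the lines of $\Gamma$ such that $(e_i,e_j)=\kappa>0$ for all $i\neq j$: fix a representative of one line, then for each other line choose the sign making the inner product with the first representative equal to $+\kappa$; the incoherence condition forces all remaining pairwise inner products to be $+\kappa$ as well. By Lemma \ref{lem:incupbd} these $d$ vectors form a basis of $\R^d$.

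Next I fix a unit vector $v$ representing $\gamma$ and record the signs $\epsilon_i\in\{\pm 1\}$ defined by $(v,e_i)=\epsilon_i\kappa$. Let $A=\{i:\epsilon_i=+1\}$ and $B=\{i:\epsilon_i=-1\}$, with cardinalities $n_A,n_B$ summing to $d$. The triple $\{\gamma,e_i,e_j\}$ lies in $\C$ precisely when $(v,e_i)(v,e_j)(e_i,e_j)<0$, i.e.\ when $\epsilon_i\epsilon_j<0$, i.e.\ when $i$ and $j$ lie in different parts. Thus for a fixed coherent pair $\alpha_1,\alpha_2$ (one in $A$, one in $B$), Lemma \ref{lem:simple} and the definition \eqref{eq:defsets} identify the sets $\Gamma_1(\gamma),\Gamma_2(\gamma)$ with $A$ and $B$ (in some order), so $\{|\Gamma_1(\gamma)|,|\Gamma_2(\gamma)|\}=\{n_A,n_B\}$.

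The main computation is then the single norm identity $(v,v)=1$. Writing $v=\sum c_i e_i$ and taking the inner product against each $e_i$ gives the linear system $(1-\kappa)c_i+\kappa C=\epsilon_i\kappa$, where $C=\sum_j c_j$. Summing the equations yields $C(1+(d-1)\kappa)=\kappa(n_A-n_B)$, and substituting back produces each $c_i$ explicitly. Expanding $(v,v)=(1-\kappa)\sum c_i^2+\kappa C^2=1$ and clearing denominators with $\rho=\kappa^{-1}$ reduces after a routine simplification to
\begin{equation*}
(n_A-n_B)^2 \;=\; d^2-(\rho-1)^2(d+\rho).
\end{equation*}
Equivalently, since $n_A+n_B=d$, the two nonnegative integers $n_A,n_B$ are the roots of $4x^2-4dx+(\rho-1)^2(d+\rho)$, which is the quadratic \eqref{eq:taypoly}. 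The closed form \eqref{eq:tayeq} follows by the quadratic formula, with the labeling convention $|\Gamma_1(\gamma)|\le|\Gamma_2(\gamma)|$ selecting the signs.

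I do not anticipate a serious obstacle: the only conceptually nontrivial point is the sign/partition identification of $\Gamma_1(\gamma),\Gamma_2(\gamma)$ with $A,B$, which rests on the incoherence of $\Gamma$ (to normalize signs coherently) together with the two-graph criterion for a triple to lie in $\C$. A slightly slicker alternative is to work directly with the $(d+1)\times(d+1)$ Gram matrix of $e_1,\ldots,e_d,v$, which must be singular because the vectors live in $\R^d$; expanding $\det$ via the block formula and the known inverse of $(1-\kappa)I_d+\kappa J_d$ yields the same quadratic without an intermediate expansion of $v$. Either route delivers the result.
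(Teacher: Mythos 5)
Your argument is correct: normalizing the incoherent basis so all pairwise inner products equal $+\kappa$, identifying $\Gamma_1(\gamma),\Gamma_2(\gamma)$ with the sign classes of $(v,e_i)$, and imposing $(v,v)=1$ does yield $(n_A-n_B)^2=(d+\rho-1)(d+\rho-\rho^2)=d^2-(\rho-1)^2(d+\rho)$, hence the quadratic \eqref{eq:taypoly} and the formula \eqref{eq:tayeq}. The paper itself only cites Taylor for this statement, and your computation is essentially the same standard route (expand $\gamma$ in the basis spanned by $\Gamma$ with coefficients constant on the two parts, as reflected in Remark \ref{rem:coeff}, and use the unit-norm condition), so there is nothing further to flag.
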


\begin{Rem}\label{rem:coeff} 
If $\Omega$ is a set of equiangular lines $\R^d$ (with common angle $\kappa=\rho^{-1}$) that contains a incoherent set $\Gamma$ of $d$ lines, then by Lemma \ref{lem:incupbd}, 
any set of unit vectors representing the lines in $\Gamma$ forms a basis for $\R^d$. Moreover, these unit vectors can be chosen so that
their pairwise inner product is positive. Given such a basis $\mathfrak{B}=\{\alpha_i\}_{i=1}^d$, Taylor showed that for $\gamma\in\Omega\backslash\Gamma$, the unit vector 
$$\frac{2d-2k+\rho-1}{(\rho-1)(d+\rho-1)}\sum_{\alpha\in\Gamma_1(\gamma)}\alpha-\frac{(2k+\rho-1)}{(\rho-1)(d+\rho-1)}\sum_{\alpha\in\Gamma_2(\gamma)}\alpha$$
spans $\gamma$. Here $k=|\Gamma_1(\gamma)|$, and note, we are identifying the lines in $\Gamma_i(\gamma)$ ($i=1,2$) with the set of unit vectors in $\mathfrak{B}$ that span them.
\end{Rem}

\begin{Prop}\label{prop:tayint}\cite[Theorem 5.6]{taylor} Let $\Omega$ be a set of equiangular lines in $\R^d$ with common angle $\kappa$ 
and $\Gamma\subseteq\Omega$ be an incoherent subset of $d$ lines. Then for distinct elements
$\gamma,\delta\in\Omega\backslash\Gamma$,
$$|\Gamma_1(\gamma)\cap\Gamma_1(\delta)|=|\Gamma_1(\gamma)|-\Delta$$
where $\Delta=(\rho-1)^2/4$ or $(\rho^2-1)/4$ and $\rho=\kappa^{-1}$.
\end{Prop}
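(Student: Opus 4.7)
The plan is to exploit the explicit expression from Remark \ref{rem:coeff} for the unit vector spanning any line in $\Omega\setminus\Gamma$ in terms of the basis $\mathfrak{B}=\{\alpha_i\}_{i=1}^d$ of unit vectors representing $\Gamma$, and then to compute $(\gamma,\delta)$ directly. Since $(\alpha_i,\alpha_j)=\kappa=1/\rho$ for $i\neq j$ and $(\alpha_i,\alpha_i)=1$, this will produce an expression for $(\gamma,\delta)$ in terms of the four intersection sizes $n_{ij}:=|\Gamma_i(\gamma)\cap\Gamma_j(\delta)|$ for $i,j\in\{1,2\}$; the equiangularity relation $(\gamma,\delta)=\pm 1/\rho$ will then pin down $n_{11}$ to exactly the two claimed values.

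First I would use Proposition \ref{prop:tayeq} to note that $|\Gamma_1(\gamma)|=|\Gamma_1(\delta)|=k$, so that the coefficients
$$c_1=\frac{2(d-k)+\rho-1}{(\rho-1)(d+\rho-1)},\qquad c_2=\frac{2k+\rho-1}{(\rho-1)(d+\rho-1)}$$
of Remark \ref{rem:coeff} are the same for both $\gamma$ and $\delta$. The partition identities $n_{11}+n_{12}=n_{11}+n_{21}=k$ and $n_{12}+n_{22}=n_{21}+n_{22}=d-k$ reduce the four intersection sizes to the single unknown $n_{11}$, via $n_{12}=n_{21}=k-n_{11}$ and $n_{22}=d-2k+n_{11}$.

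Next I would write $\gamma=\sum_i g_i\alpha_i$ and $\delta=\sum_i d_i\alpha_i$, where $g_i,d_i\in\{c_1,-c_2\}$ according to which of $\Gamma_1(\cdot),\Gamma_2(\cdot)$ contains $\alpha_i$, and expand
$$(\gamma,\delta)=\sum_{i,j}g_id_j(\alpha_i,\alpha_j)=\tfrac{1}{\rho}\Bigl(\sum_ig_i\Bigr)\Bigl(\sum_jd_j\Bigr)+\tfrac{\rho-1}{\rho}\sum_ig_id_i.$$
Direct calculation yields $\sum_ig_i=\sum_id_i=(2k-d)/(d+\rho-1)$ and, after grouping by $n_{ij}$,
$$\sum_ig_id_i=n_{11}(c_1+c_2)^2+(d-2k)c_2^2-2kc_1c_2.$$
Using the identity $c_1+c_2=2/(\rho-1)$ and setting $(\gamma,\delta)=\pm 1/\rho$ produces, for each choice of sign, a linear equation in $n_{11}$; solving these two equations delivers exactly the values $n_{11}=k-(\rho-1)^2/4$ and $n_{11}=k-(\rho^2-1)/4$, which is the conclusion.

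The main obstacle is purely bookkeeping: keeping the four-cell intersection table consistent with both partitions and simplifying $\sum_ig_id_i$ cleanly via $c_1+c_2=2/(\rho-1)$. No new structural input beyond Remark \ref{rem:coeff} and Proposition \ref{prop:tayeq} is needed, since the dichotomy $\Delta\in\{(\rho-1)^2/4,(\rho^2-1)/4\}$ corresponds precisely to the two possible signs of the pairwise inner product in an equiangular configuration.
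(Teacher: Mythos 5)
The paper does not prove this proposition; it is imported verbatim from Taylor (Theorem 5.6), so your argument is a reconstruction rather than a variant of anything in the text, and it is correct. Your computation is the natural one (and essentially Taylor's own coordinate argument): I checked the key identities, namely $\sum_i g_i=\sum_i d_i=(2k-d)/(d+\rho-1)$, $c_1+c_2=2/(\rho-1)$, and the four-cell expansion $\sum_i g_id_i=n_{11}(c_1+c_2)^2+(d-2k)c_2^2-2kc_1c_2$, and they hold. One point you state too quickly is the last step: setting $(\gamma,\delta)=\pm 1/\rho$ gives a linear equation in $n_{11}$ whose solution only collapses to the clean values $k-(\rho-1)^2/4$ and $k-(\rho^2-1)/4$ after using the fact that $k$ is a root of $4x^2-4dx+(\rho-1)^2(d+\rho)$ (equivalently $4k(d-k)=(\rho-1)^2(d+\rho)$, which also gives $c_1c_2=(\rho+1)/\bigl((\rho-1)(d+\rho-1)\bigr)$); since you have already invoked Proposition \ref{prop:tayeq}, this relation is available, and with it the identity does check out, the coefficient $4/(\rho(\rho-1))$ of $n_{11}$ being nonzero so each sign determines $n_{11}$ uniquely, and the two signs differing by exactly $(\rho-1)/2$ as required. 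In the balanced case $|\Gamma_1(\gamma)|=|\Gamma_2(\gamma)|=d/2$ the labelling of $\Gamma_1(\delta)$ is ambiguous, but swapping labels replaces $n_{11}$ by $k-n_{11}$, which interchanges the two admissible values, so the conclusion is labelling-independent and your proof covers that case as well.
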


\subsection{Maximal Incoherent Subsets and $2$-designs}
Let $\Omega$ be a set of equiangular lines in $\R^d$ and let $\Gamma\subseteq\Omega$ be a maximal incoherent subset of lines. 
In the case that $(\Omega,\C)$ forms a regular two-graph with parameters $(n,a,b)$, Taylor~\cite{taylor} proved the following identity:
\begin{equation}\label{eq:tayloreq}\sum_{\gamma\in\Omega\backslash{\Gamma}}|\Gamma_1(\gamma)||\Gamma_2(\gamma)|=\frac{a|\Gamma|(|\Gamma|-1)}{2}.\end{equation}
We now prove similar identities to the one above, which then allow us to relate $2$-designs to maximal incoherent subsets. First we introduce the following sets.

Let $\gamma\in\Omega\backslash{\Gamma}$ and $\alpha\in\Gamma$. We define
$$\Gamma(\gamma,\alpha,\in),\,\,\Gamma(\gamma,\alpha,\notin)$$
to be the set in $\{\Gamma_1(\gamma),\Gamma_2(\gamma)\}$ that does, respectively does not, contain $\alpha$. 

\begin{Lem}\label{lem:setsum} Let $\Gamma$ be a maximal incoherent subset in a regular two-graph $(\Omega,\C)$ with parameters $(n,a,b)$, and suppose $|\Gamma|=g$.
Then for a fixed $\alpha\in\Gamma$,
\begin{align}
\label{eq:setsum1}\sum_{\gamma\in\Omega\backslash{\Gamma}}|\Gamma(\gamma,\alpha,\in)|&=(n-g-a)g+a\\
\label{eq:setsum2}\sum_{\gamma\in\Omega\backslash{\Gamma}}|\Gamma(\gamma,\alpha\notin)|&=a(g-1)\\
\label{eq:setsum}\sum_{\gamma\in\Omega\backslash{\Gamma}}|\Gamma(\gamma,\alpha,\in)|^2&=(n-g-\frac{3a}{2})g^2+\frac{3a}{2}g\\
\label{eq:setsum3}\sum_{\gamma\in\Omega\backslash{\Gamma}}|\Gamma(\gamma,\alpha,\notin)|^2&=\frac{ag(g-1)}{2}.
\end{align}
Moreover, for $\beta\in\Gamma\backslash\{\alpha\}$,
\begin{align}
\label{eq:setsum4}\sum_{\gamma\in S_{\alpha\beta}}|\Gamma(\gamma,\alpha,\in)|&=\sum_{\gamma\in S_{\alpha\beta}}|\Gamma(\gamma,\alpha,\notin)|=\frac{ag}{2},\\
\label{eq:setsum5}\sum_{\gamma\in\Omega\backslash{(\Gamma\cup S_{\alpha\beta})}}|\Gamma(\gamma,\alpha,\in)|&=(n-g-\frac{3a}{2})g+a,\\
\label{eq:setsum6}\sum_{\gamma\in\Omega\backslash{(\Gamma\cup S_{\alpha\beta})}}|\Gamma(\gamma,\alpha,\notin)|&=\frac{a(g-2)}{2}.
\end{align}
\end{Lem}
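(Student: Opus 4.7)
The plan is to prove all seven identities by double counting, using two tools: regularity of $(\Omega,\C)$ (every pair of lines lies in exactly $a$ coherent triples), and the intersection formula $|S_{\alpha\delta_1}\cap S_{\alpha\delta_2}|=a/2$ for incoherent triples $\{\alpha,\delta_1,\delta_2\}$ from Theorem~\ref{thm:desint1}, which applies to any three elements of $\Gamma$ since $\Gamma$ is incoherent.

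For (\ref{eq:setsum1}) and (\ref{eq:setsum2}), I would rewrite membership via Lemma~\ref{lem:simple}: for $\delta\in\Gamma\setminus\{\alpha\}$, $\delta\in\Gamma(\gamma,\alpha,\in)$ iff $\{\gamma,\alpha,\delta\}\notin\C$, while $\delta\in\Gamma(\gamma,\alpha,\notin)$ iff $\{\gamma,\alpha,\delta\}\in\C$. Swapping the order of summation, fix $\delta\in\Gamma\setminus\{\alpha\}$: regularity gives exactly $a$ elements $\gamma$ with $\{\gamma,\alpha,\delta\}\in\C$, none of which lie in $\Gamma$ (since $\Gamma$ is incoherent), so these contribute to (\ref{eq:setsum2}); the remaining $\gamma\in\Omega\setminus\Gamma$ number $(n-2-a)-(g-2)=n-g-a$, contributing to (\ref{eq:setsum1}). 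Summing over $\delta\in\Gamma\setminus\{\alpha\}$ and adding the diagonal term $\delta=\alpha$ (which contributes $n-g$ to the ``in'' sum and $0$ to the ``notin'' sum) yields both identities.

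For (\ref{eq:setsum3}) I would expand the square as a count of triples $(\gamma,\delta_1,\delta_2)$ with $\delta_1,\delta_2\in\Gamma(\gamma,\alpha,\notin)$. The diagonal $\delta_1=\delta_2\in\Gamma\setminus\{\alpha\}$ contributes $a(g-1)$ by the argument above. For $\delta_1\neq\delta_2$ in $\Gamma\setminus\{\alpha\}$, applying the two-graph axiom to the $4$-set $\{\gamma,\alpha,\delta_1,\delta_2\}$ (whose subset $\{\alpha,\delta_1,\delta_2\}$ is not in $\C$) shows that $\{\gamma,\alpha,\delta_1\},\{\gamma,\alpha,\delta_2\}\in\C$ forces $\{\gamma,\delta_1,\delta_2\}\notin\C$; hence both $\delta_i$ lie in the part opposite $\alpha$ precisely when $\gamma\in S_{\alpha\delta_1}\cap S_{\alpha\delta_2}$, and Theorem~\ref{thm:desint1} gives $a/2$ such $\gamma$. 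Summed over the $(g-1)(g-2)$ ordered pairs this adds $(g-1)(g-2)a/2$, which combines with the diagonal to give $ag(g-1)/2$. Identity (\ref{eq:setsum}) then follows from the algebraic identity
\[|\Gamma(\gamma,\alpha,\in)|^2+|\Gamma(\gamma,\alpha,\notin)|^2=g^2-2|\Gamma_1(\gamma)||\Gamma_2(\gamma)|,\]
summing over $\gamma\in\Omega\setminus\Gamma$, invoking Taylor's identity (\ref{eq:tayloreq}), and subtracting (\ref{eq:setsum3}).

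Finally, for (\ref{eq:setsum4}) I would double count pairs $(\gamma,\delta)$ with $\gamma\in S_{\alpha\beta}$ and $\delta$ in the part of $\alpha$: the term $\delta=\alpha$ contributes $|S_{\alpha\beta}|=a$; the term $\delta=\beta$ contributes $0$ since for $\gamma\in S_{\alpha\beta}$ the line $\beta$ lies in the part opposite $\alpha$; each $\delta\in\Gamma\setminus\{\alpha,\beta\}$ contributes $|S_{\alpha\beta}|-|S_{\alpha\beta}\cap S_{\alpha\delta}|=a-a/2=a/2$ by Theorem~\ref{thm:desint1} applied to the incoherent triple $\{\alpha,\beta,\delta\}$. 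Totalling gives $ag/2$, and the second equality in (\ref{eq:setsum4}) follows since $|\Gamma(\gamma,\alpha,\in)|+|\Gamma(\gamma,\alpha,\notin)|=g$. Identities (\ref{eq:setsum5}) and (\ref{eq:setsum6}) are immediate by subtracting (\ref{eq:setsum4}) from (\ref{eq:setsum1}) and (\ref{eq:setsum2}), once we observe that $S_{\alpha\beta}\subseteq\Omega\setminus\Gamma$ (any coherent triple must include a point outside the incoherent $\Gamma$). The main technical obstacle is the off-diagonal case of (\ref{eq:setsum3}): one must carefully verify via the two-graph parity axiom that the condition ``$\delta_1,\delta_2$ both in the part opposite $\alpha$'' is exactly $\gamma\in S_{\alpha\delta_1}\cap S_{\alpha\delta_2}$, so that Theorem~\ref{thm:desint1} applies cleanly.
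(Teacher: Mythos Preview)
Your proof is correct, but it takes a somewhat different route from the paper's for the square-sum identities and for (\ref{eq:setsum4}). The paper never invokes Theorem~\ref{thm:desint1}; instead, for (\ref{eq:setsum3}) it multiplies (\ref{eq:setsum2}) by $g$ and uses the simple relation $g|\Gamma(\gamma,\alpha,\notin)|=|\Gamma(\gamma,\alpha,\notin)|^2+|\Gamma_1(\gamma)||\Gamma_2(\gamma)|$ together with Taylor's identity (\ref{eq:tayloreq}), and then handles (\ref{eq:setsum}) by the same trick. For (\ref{eq:setsum4}) the paper exploits the $\alpha\leftrightarrow\beta$ symmetry: when $\gamma\in S_{\alpha\beta}$ one has $\Gamma(\gamma,\alpha,\notin)=\Gamma(\gamma,\beta,\in)$, so the two sums in (\ref{eq:setsum4}) are equal by applying (\ref{eq:setsum1}) with $\beta$ in place of $\alpha$; since they add to $ag$ each equals $ag/2$. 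Your approach instead leans on the $a/2$ intersection formula from Theorem~\ref{thm:desint1}, which gives (\ref{eq:setsum3}) directly without Taylor's identity and computes (\ref{eq:setsum4}) by an explicit term-by-term count. The paper's version is slightly more self-contained (it only needs Taylor's identity, not the earlier design-theoretic result), while yours makes the combinatorics of the square terms more transparent and shows that (\ref{eq:setsum3}) can be obtained independently of (\ref{eq:tayloreq}).
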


\begin{proof} 
We first show that \eqref{eq:setsum2} holds. We claim that, given $\alpha\in\Gamma$, $$\bigcup_{\beta\in\Gamma\backslash\{\alpha\}}S_{\alpha\beta}=\Omega\backslash\Gamma.$$
Let $\gamma\in\Omega\backslash\Gamma$. Then there exists $\alpha_1,\alpha_2\in\Gamma$ such that $\{\alpha_1,\alpha_2,\gamma\}\in\C$. If $\alpha=\alpha_i$ for $i=1$ or $2$, then
$\gamma\in S_{\alpha\alpha_j}$, where $j\neq i$. Otherwise, consider the $4$-set $\{\alpha,\alpha_1,\alpha_2,\gamma\}$. 
Since $\Gamma$ is an incoherent set, it follows from the two-graph property that $\{\alpha,\alpha_i,\gamma\}\in\C$ for $i=1$ or $2$. In particular, $\gamma\in S_{\alpha\alpha_i}$.

By Lemma \ref{lem:simple}, for $\beta\in\Gamma\backslash\{\alpha\}$, $$\gamma\in S_{\alpha\beta}\iff\{\gamma,\beta,\alpha\}\in\C\iff\beta\in\Gamma(\gamma,\alpha,\notin).$$ Thus counting in two ways the pairs $$\{\{\gamma,\beta\}\in\Omega\backslash\Gamma\times\Gamma\backslash\{\alpha\}\,|\,\{\gamma,\beta,\alpha\}\in\C\}$$
we deduce that \eqref{eq:setsum2} holds. Since 
$$|\Gamma(\gamma,\alpha,\in)|+|\Gamma(\gamma,\alpha,\notin)|=g$$
it follows that \eqref{eq:setsum1} is equal to $(n-g)g-a(g-1)=(n-g-a)g+a$

We deduce from \eqref{eq:tayloreq} and \eqref{eq:setsum2} that
\begin{align*}ag(g-1)&=\sum g|\Gamma(\gamma,\alpha,\notin)|=\sum |\Gamma(\gamma,\alpha,\notin)|^2+\sum |\Gamma_1(\gamma)||\Gamma_2(\gamma)|\\
&=\sum |\Gamma(\gamma,\alpha,\notin)|^2+\frac{ag(g-1)}{2}.
\end{align*}
Solving the above equation gives \eqref{eq:setsum3}. An almost identical argument gives \eqref{eq:setsum}.

Now let $\beta\in\Gamma\backslash\{\alpha\}$. For all $\gamma\in S_{\alpha\beta}$, it follows from Lemma \ref{lem:simple} that
$$\Gamma(\gamma,\alpha,\notin)=\Gamma(\gamma,\beta,\in).$$
Moreover, for all $\gamma\in\Omega\backslash(\Gamma\cup S_{\alpha\beta})$
$$\Gamma(\gamma,\alpha,\in)=\Gamma(\gamma,\beta,\in).$$
Thus by \eqref{eq:setsum1},
$$\sum_{\gamma\in S_{\alpha\beta}}|\Gamma(\gamma,\alpha,\in)|=\sum_{\gamma\in S_{\alpha\beta}}|\Gamma(\gamma,\beta,\in)|=\sum_{\gamma\in S_{\alpha\beta}}|\Gamma(\gamma,\alpha,\notin)|.$$
Hence
$$ag=\sum_{\gamma\in S_{\alpha\beta}}|\Gamma(\gamma,\alpha,\in)|+|\Gamma(\gamma,\alpha,\notin)|=2\sum_{\gamma\in S_{\alpha\beta}}|\Gamma(\gamma,\alpha,\in)|$$
which proves \eqref{eq:setsum4}. Subtracting \eqref{eq:setsum4} from \eqref{eq:setsum1}, respectively from \eqref{eq:setsum2}, gives \eqref{eq:setsum5}, respectively \eqref{eq:setsum6}.
\end{proof}

\begin{Thm}\label{thm:linedes} Let $\Omega$ be a set of equiangular lines in $\R^d$ such that
$(\Omega,\C)$ is a regular two-graph with parameters $(n,a,b)$. Moreover, let $\Gamma\subseteq\Omega$ be a maximal incoherent subset such that $|\Gamma_1(\gamma)|=g_1$ for all $\gamma\in\Omega\backslash\Gamma$, and let $g_2=|\Gamma|-g_1$. Then 
if $g_1\neq g_2$,
$(\Gamma,\B_i)$ is a $2$-$(g,g_i,\lambda_i)$ design for $i=1,2$ where 
$$\B_i=\{\Gamma_i(\gamma)\,|\,\gamma\in\Omega\backslash\Gamma\}$$
and \begin{equation}\label{eq:lam}\lambda_i=\frac{a(g_i-1)}{2g_j},\,\,\,\,\,i\neq j.\end{equation}
Moreover, if $g_1=g_2$ then
$(\Gamma,\B)$ is a $2$-$(g,g/2,n-g-a)$ design where $\B=\B_1\cup\B_2$.
\end{Thm}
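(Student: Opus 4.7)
The strategy is to count, for each pair $\{\alpha,\beta\}\subseteq\Gamma$, the number of blocks of $\B_i$ (or of $\B_1\cup\B_2$) that contain $\{\alpha,\beta\}$ and show this depends only on $i$ and not on the pair. The whole argument pivots on a single count: for every such pair,
\begin{equation*}
|\{\gamma\in\Omega\backslash\Gamma\,|\,\{\gamma,\alpha,\beta\}\notin\C\}|=n-g-a.
\end{equation*}
This follows from regularity, which yields $n-2-a$ incoherent triples through $\{\alpha,\beta\}$, minus the $g-2$ such triples with $\gamma\in\Gamma$ (since $\Gamma$ is itself incoherent). By Lemma \ref{lem:simple}, these $\gamma$ are precisely the ones for which $\{\alpha,\beta\}\subseteq\Gamma_i(\gamma)$ for some $i\in\{1,2\}$.

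When $g_1=g_2=g/2$, each such $\gamma$ contributes exactly one block to $\B_1\cup\B_2$ through $\{\alpha,\beta\}$ (the unique $\Gamma_i(\gamma)$ containing both), while every $\gamma$ giving a coherent triple with $\alpha,\beta$ contributes none (in that case $\alpha,\beta$ lie in opposite $\Gamma_i(\gamma)$'s). So the number of blocks of $\B=\B_1\cup\B_2$ through $\{\alpha,\beta\}$ equals $n-g-a$, which delivers the $2$-$(g,g/2,n-g-a)$ design. When $g_1\neq g_2$, I set $\lambda_i(\alpha,\beta)=|\{\gamma\in\Omega\backslash\Gamma\,|\,\alpha,\beta\in\Gamma_i(\gamma)\}|$, so the key count reads $\lambda_1(\alpha,\beta)+\lambda_2(\alpha,\beta)=n-g-a$. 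A second linear equation comes from \eqref{eq:setsum5}: its index set $\Omega\backslash(\Gamma\cup S_{\alpha\beta})$ is exactly the set of $\gamma\in\Omega\backslash\Gamma$ giving an incoherent triple with $\{\alpha,\beta\}$, and by Lemma \ref{lem:simple} the summand $|\Gamma(\gamma,\alpha,\in)|$ for each such $\gamma$ equals $g_i$, where $i$ is the unique index with $\{\alpha,\beta\}\subseteq\Gamma_i(\gamma)$. Hence $g_1\lambda_1(\alpha,\beta)+g_2\lambda_2(\alpha,\beta)=(n-g-\tfrac{3a}{2})g+a$. As $g_1\neq g_2$, this $2\times 2$ linear system has a unique solution that is manifestly independent of $\{\alpha,\beta\}$, so each $(\Gamma,\B_i)$ is a $2$-design.

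To identify the stated value $\lambda_i=a(g_i-1)/(2g_j)$, I substitute the identity $2(n-g)g_1g_2=ag(g-1)$, which is precisely \eqref{eq:tayloreq} under the constant-size hypothesis $|\Gamma_1(\gamma)|=g_1$. Equivalently, once the design property is in hand one can read $\lambda_i$ off from the standard design identity $b_ig_i(g_i-1)=\lambda_ig(g-1)$ with $b_i=|\B_i|=n-g$ combined with the same consequence of \eqref{eq:tayloreq}. The only real obstacle is bookkeeping in the algebraic simplification of the linear-system solution; no structural input beyond Lemma \ref{lem:setsum} and \eqref{eq:tayloreq} is needed.
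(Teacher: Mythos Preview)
Your argument is correct and follows essentially the same route as the paper: you use Lemma~\ref{lem:simple} to identify the $\gamma$'s with $\{\alpha,\beta\}\subseteq\Gamma_i(\gamma)$ as exactly those in $\Omega\backslash(\Gamma\cup S_{\alpha\beta})$, obtain the two linear equations $\lambda_1+\lambda_2=n-g-a$ and (via \eqref{eq:setsum5}) $g_1\lambda_1+g_2\lambda_2=(n-g-\tfrac{3a}{2})g+a$, solve when $g_1\neq g_2$, and simplify using \eqref{eq:tayloreq}. The only cosmetic difference is that you derive the count $n-g-a$ by subtracting the $g-2$ incoherent triples inside $\Gamma$ from the total $n-2-a$, whereas the paper reads it off directly from $|\Omega\backslash(\Gamma\cup S_{\alpha\beta})|$.
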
 

\begin{proof}
Suppose that $|\Gamma|=g$ and let $\alpha,\beta\in\Gamma$. By Lemma \ref{lem:simple},
for all $\gamma\in S_{\alpha\beta}$, 
\begin{equation}\label{eq:ab1}\alpha\in\Gamma_i(\gamma),\,\,\beta\in\Gamma_j(\gamma)\end{equation}
where $i\neq j$, and for all $\gamma\in\Omega\backslash(\Gamma\cup S_{\alpha\beta})$,
\begin{equation}\label{eq:ab2}\alpha,\beta\in\Gamma_i(\gamma)\end{equation}
where $i=1$ or $2$. Let $k_i$ be the number of $\gamma\in\Omega\backslash(\Gamma\cup S_{\alpha\beta})$ such that
$\alpha,\beta\in\Gamma_i(\gamma)$ for $i=1,2$. Then 
$$k_1+k_2=n-a-g$$
and by \eqref{eq:setsum5}
$$k_1g_1+k_2g_2=(n-g-\frac{3a}{2})g+a.$$
Assuming $g_1\neq g_2$, these two linear equations can be solved to give 
$$k_1=\frac{(2(n-g)-3a)g+2a-2g_2(n-g-a)}{2(g_1-g_2)}=\frac{a(g_1-1)}{2g_2},$$
and
$$k_2=\frac{2g_1(n-g-a)-(2(n-g)-3a)g-2a}{2(g_1-g_2)}=\frac{a(g_2-1)}{2g_1}.$$
The identities on the right hand side can be determined using \eqref{eq:tayloreq} and the fact that $g=g_1+g_2$.
Thus every pair of elements in $\Gamma$ is contained in $k_i$ elements of $\B_i$ for $i=1,2$, proving the result.

If $g_1=g_2=g/2$, then it follows from \eqref{eq:ab1} and \eqref{eq:ab2} that $(\Gamma,\B)$ is a $2$-$(g,g/2,n-g-a)$ design.

\end{proof}

\begin{Thm}\label{thm:eqquasi} Let $\Omega$ be a set of equiangular lines in $\R^d$ such that $(\Omega,\C)$ is a regular two-graph
and $\Inc(\Omega)=d$. Let $\Gamma\subseteq\Omega$ be an incoherent subset of $d$ lines,
$$\B_1=\{\Gamma_1(\gamma)\,|\,\gamma\in\Omega\backslash\Gamma\}$$
and $\rho=\kappa^{-1}$. Suppose that $|\Gamma_1(\gamma)|<d/2$ for some $\gamma\in\Omega\backslash\Gamma$. Then
either 
\begin{itemize}
\item[i)] $d=3$ and $\Omega$ is equivalent to the $6$ equiangular lines described in section \ref{sec:examples}; or
\item[ii)] $|\Omega|>2d$ and $(\Gamma,\B_1)$ is a quasi-symmetric $2$-$(d,k,\lambda;s_1,s_2)$ design, where,
$k$ is a root of the quadratic
$$4x^2-4dx+(\rho-1)^2(d+\rho)$$
and 
$$\lambda=\frac{k(k-1)}{\rho^2-d},\,\,\,s_1=k-\frac{(\rho-1)^2}{4},\,\,\,s_2=k-\frac{(\rho^2-1)}{4}.$$
Moreover, $|\Omega|=d(d+1)/2$ if and only if $(\Gamma,\B_1)$ is a $3$-design.
\end{itemize}
\end{Thm}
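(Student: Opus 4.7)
The plan is to split according to whether $|\Omega| \leq 2d$ or $|\Omega| > 2d$, deriving case (i) in the former range and case (ii) in the latter. In either situation I would first observe that, since $|\Gamma_1(\gamma_0)| < d/2$ for some $\gamma_0$, Proposition \ref{prop:tayeq} forces the quadratic $4x^2 - 4dx + (\rho-1)^2(d+\rho)$ to have two distinct non-negative integer roots. The labelling convention $|\Gamma_1(\gamma)| \leq |\Gamma_2(\gamma)|$ then identifies $k := |\Gamma_1(\gamma)|$ with the smaller root, independently of $\gamma$, so every block of $\B_1$ has the same size $k$.

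To handle the range $|\Omega| \leq 2d$, I would first rearrange $|\Omega| = d(\rho^2-1)/(\rho^2-d) \leq 2d$ into $\rho^2 \geq 2d-1$, and couple it with the constraint $k(d-k) = (\rho-1)^2(d+\rho)/4 \leq d^2/4$. A direct computation shows that at $\rho = \sqrt{2d-1}$ the quantity $(\rho-1)^2(d+\rho)$ simplifies to $2(d-1)^2$, and the inequality $2(d-1)^2 \leq d^2$ fails for $d \geq 4$; monotonicity of $(\rho-1)^2(d+\rho)$ in $\rho$ then excludes all $d \geq 4$. The case $d = 2$ would force $k = 0$, contradicting $k(d-k) = (\rho-1)^2(d+\rho)/4 > 0$, so $d = 3$. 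There, $k < 3/2$ and integrality pin $k = 1$, hence $(\rho-1)^2(3+\rho) = 8$, whose only root $\rho > 1$ is $\rho = \sqrt{5}$. This is exactly the icosahedral configuration of Section \ref{sec:examples}, and by the uniqueness noted in Remark \ref{rem:uniiso}, $\Omega$ is equivalent to those 6 lines, yielding (i).

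For the range $|\Omega| > 2d$, Theorem \ref{thm:rhoodd} gives that $\rho$ is an odd integer, so $(\rho-1)^2/4$ and $(\rho^2-1)/4$ are integers. Proposition \ref{prop:tayint} then supplies exactly the two block intersection numbers $s_1 = k - (\rho-1)^2/4$ and $s_2 = k - (\rho^2-1)/4$, and Theorem \ref{thm:linedes} (applied with $g_1 = k \neq d-k = g_2$) produces a $2$-$(d, k, \lambda_1)$ design with $\lambda_1 = a(k-1)/(2(d-k))$. To pin down $a$, I would use a standard Seidel matrix spectral computation: on a representing graph the Seidel matrix $S$ has only two eigenvalues, namely $-\rho$ with multiplicity $n-d$ and $(n-1)/\rho$ with multiplicity $d$, so combining the minimal polynomial identity $S^2 = \bigl((n-1)/\rho - \rho\bigr) S + (n-1) I$ with the two-graph identity $(S^2)_{ij} = (n-2-2a) S_{ij}$ yields $a = (\rho-1)^2(d+\rho)/(2(\rho^2-d))$. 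Plugging this into $\lambda_1$ and using $k(d-k) = (\rho-1)^2(d+\rho)/4$ simplifies to $\lambda_1 = k(k-1)/(\rho^2-d)$, matching the claimed parameters. For the final clause, I would apply Theorem \ref{thm:calder} with $x = (\rho-1)^2/4$ and $y = (\rho^2-1)/4$; substituting the expressions $xy$, $x+y$ and $k(d-k)$ into $f(d,k,x,y)$ and simplifying reduces $f = 0$ to the identity $\rho^2 = d + 2$, which via $|\Omega| = d(\rho^2-1)/(\rho^2-d)$ is equivalent to saturation of the absolute bound $|\Omega| = d(d+1)/2$.

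The main obstacle I expect is the dispatch of the case $|\Omega| \leq 2d$: one must simultaneously juggle the inequalities $\rho^2 \geq 2d-1$ and $(\rho-1)^2(d+\rho) \leq d^2$, the integrality of $k$, $d-k$ and $|\Omega|$, and the reality of the roots of the quadratic in Proposition \ref{prop:tayeq}, and keeping these sharp enough to isolate only the icosahedral configuration requires care. Once that case is ruled out, the quasi-symmetric structure and the 3-design characterisation in case (ii) are essentially mechanical consequences of Propositions \ref{prop:tayeq}--\ref{prop:tayint}, Theorem \ref{thm:linedes}, and Theorem \ref{thm:calder}, modulo the clean spectral formula for $a$.
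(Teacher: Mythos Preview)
Your overall architecture matches the paper's: split by whether $|\Omega|$ exceeds $2d$, feed Proposition~\ref{prop:tayeq} and Theorem~\ref{thm:linedes} into the case analysis, read off the intersection numbers from Proposition~\ref{prop:tayint}, and finish the $3$-design clause with Theorem~\ref{thm:calder}. The two routes diverge mainly in execution. For the small case the paper first invokes Fisher's inequality to get $|\B_1|\geq d$, then in the boundary case $|\B_1|=d$ computes $a=2k(d-k)/(d-1)$ and uses the two-graph inequality $n\leq 3a$ to force $\rho^2>d$, after which $\rho^2=2d-1$ and the discriminant bound $d^2-(\rho-1)^2(d+\rho)\geq 0$ pin down $d=3$. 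Your inequality argument via $k(d-k)\leq d^2/4$ and monotonicity of $(\rho-1)^2(d+\rho)$ is a neat shortcut that bypasses Fisher entirely. For case~(ii) the paper gets $\lambda$ from the block count $|\B_1|=n-d=d(d-1)/(\rho^2-d)$ rather than from a spectral formula for $a$; both computations are equivalent.

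Two points to tighten. First, in your treatment of $|\Omega|\leq 2d$ you invoke $|\Omega|=d(\rho^2-1)/(\rho^2-d)$ at the outset, but Theorem~\ref{thm:relbd} only supplies this formula under the hypothesis $\rho^2>d$, which you have not yet established there. The spectral argument you deploy later (two Seidel eigenvalues $-\rho$ and $(n-1)/\rho$, trace zero) actually forces $\rho^2>d$ and the formula simultaneously, so simply move that observation earlier; the paper instead rules out $\rho^2\leq d$ combinatorially in each branch. Second, in case~(ii) Proposition~\ref{prop:tayint} only gives that the block intersections lie in $\{s_1,s_2\}$, not that both values occur. The paper closes this by noting that a single intersection number would make $(\Gamma,\B_1)$ a symmetric design with $|\B_1|=d$, contradicting $|\Omega|>2d$; you should insert the same one-line observation.
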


\begin{proof}
By Proposition \ref{prop:tayeq}, $$k=|\Gamma_1(\gamma)|=\frac{d-\sqrt{d^2-(\rho-1)^2(d+\rho)}}{2}$$
for all $\gamma\in\Omega\backslash\Gamma$. Moreover,  as $|\Gamma_1(\gamma)|\neq |\Gamma_2(\gamma)|$, we can apply Theorem \ref{thm:linedes}. In particular,
$(\Gamma,\B_1)$ is a $2$-design, so by Fisher's inequality \cite[Theorem 1.14]{camvan}, $|\B_1|\geq d$. First suppose that $|\B_1|=d$, so $|\Omega|=2d$. 
Then as $\lambda=a(k-1)/2(d-k)$, by \eqref{eq:lam}, 
we deduce that 
$$a=\frac{2k(d-k)}{d-1}=\frac{(\rho-1)^2(d+\rho)}{2(d-1)}.$$ Further suppose that $\rho^2\leq d$. Then
$$a\leq\frac{(\sqrt{d}-1)^2(d+\sqrt{d})}{2(d-1)}=\frac{(d-\sqrt{d})}{2}.$$
However, this implies that
$$|\Omega|=2d\leq 3a\leq\frac{3(d-\sqrt{d})}{2}$$
which is a contradiction. Thus $\rho^2>d$ and, by Theorem \ref{thm:relbd}, 
$$|\Omega|=\frac{d(\rho^2-1)}{\rho^2-d}=2d$$
which gives $\rho^2=2d-1$. Therefore, in order for $|\Gamma_1(\gamma)|$ to be a real number,
$$d^2-(\rho-1)^2(d+\rho)=-d^2+4d-2\geq 0.$$
In particular, $2-\sqrt{2}\leq d\leq 2+\sqrt{2}$, so $d=1,2$ or $3$. Obviously $d\neq 1$ or $2$ as the maximum number of equiangular lines in these dimensions is $1$ and $3$ respectively. 
Thus $d=3$ and $|\Omega|=6$. The result i) now follows from the uniqueness of the regular two-graph on $6$ elements.

Now suppose that $|\B_1|>d$. By Proposition \ref{prop:tayint}, $(\Gamma,\B_1)$ has at most two block intersection numbers, and 
by \cite[Theorem 1.15]{camvan}, $(\Gamma,\B_1)$ has exactly one block intersection number if and only if $|\B_1|=d$. Thus
$\B_1$ has exactly two block intersection numbers, and hence is a $2$-$(d,k,\lambda;s_1,s_2)$ quasi-symmetric design, where
$$\lambda=\frac{a(k-1)}{2(d-k)},\,\,\,s_1=k-\frac{(\rho-1)^2}{4},\,\,\,s_2=k-\frac{(\rho^2-1)}{4}.$$
As $(\Gamma,\B_1)$ is quasi-symmetric design, we can apply Theorem \ref{thm:calder}. In particular $n=d$, 
$k=|\Gamma_1(\gamma)|$, $x=(\rho-1)^2/4$ and $y=(\rho^2-1)/4$ (Proposition \ref{prop:tayint}). Substituting these values into \eqref{eq:calder} gives
\begin{equation}\label{eq:fuse}f(d,k,(\rho-1)^2/4,(\rho^2-1)/4)=\frac{(\rho^2-1)^2(\rho^2-d-2)}{16}\geq 0,\end{equation}
so $\rho^2>d$. Thus by Theorem \ref{thm:relbd},
\begin{equation}\label{eq:sizeO}|\Omega|=\frac{d(\rho^2-1)}{\rho^2-d}=|\B_1|+d,\end{equation}
and hence
$$\lambda=\frac{k(k-1)}{\rho^2-d}.$$
Finally, the fact that $|\Omega|=d(d+1)/2$ if and only if $(\Gamma,\B_1)$ is a $3$-design follows from
Theorems \ref{thm:calder} and \ref{thm:absbd}, and \eqref{eq:fuse} and \eqref{eq:sizeO}.
\end{proof}

\section{Proofs of Theorem \ref{thm:main} and Theorem \ref{thm:main2}}\label{sec:proofmain}

We can now use Theorem \ref{thm:eqquasi} to prove Theorems \ref{thm:main} and \ref{thm:main2}.

\begin{proof}[Proof of Theorem \ref{thm:main}]
Let $\Omega$ be a set of $d(d+1)/2$ equiangular lines in $\R^d$ and $\Gamma\subseteq\Omega$ an incoherent subset of $d$ lines. 
The examples of sets of $d(d+1)/2$ equiangular lines in dimensions $d\leq 3$ are unique up to isometry, and they each have $\Inc(\Omega)=d$, so we 
can assume that $d>3$. 

By Proposition \ref{prop:tayeq}, for any $\gamma\in\Omega\backslash\Gamma$ we have that
$$|\Gamma_j(\gamma)|=\frac{d+(-1)^j\sqrt{d^2-(\rho-1)^2(d+\rho)}}{2}.$$
for $j=1,2$. As $|\Gamma_1(\gamma)|$ and $|\Gamma_2(\gamma)|$ are both integers we deduce that
$$\sqrt{d^2-(\rho-1)^2(d+\rho)}\in \Z$$
In particular there exists an integer $z$ such that
\begin{equation}\label{eq:zeq}z^2=d^2-(\rho-1)^2(d+\rho)\end{equation}
As $|\Omega|=d(d+1)/2$ we have, by Theorem \ref{thm:absbd}, that $d=\rho^2-2$. 
Substituting this into \eqref{eq:zeq} gives $$z^2=\rho^3-\rho^2-5\rho+6=(\rho-2)(\rho^2+\rho-3).$$
Therefore $(\rho,z)$ lies on the following elliptic curve:
\begin{equation}y^2=x^3-x^2-5x+6\end{equation}
Moreover as $d>3$, $\rho$ is an odd integer (Theorem \ref{thm:absbd}), and so $(\rho,z)$ is an integer point that lies on this elliptic curve. There are exactly
$13$ such points \cite[Elliptic Curve 156.a2]{lmfdb}, namely:
$$(-2,\pm 2), (-1,\pm 3), (1,\pm 1), (2,0), (3,\pm 3), (5,\pm 9),  (29, \pm 153)$$
Thus $\rho=1,3,5$ or $29$.
Clearly $\rho\neq 1$ as $d>0$. If $\rho=3$ or $5$ then $d=7$ or $23$ respectively. As the sets of $d(d+1)/2$ 
equiangular lines in dimensions $7$ and $23$ are unique up to isometry, in these cases $\Omega$ is 
isometric to corresponding examples given in Section \ref{sec:examples}.

Finally suppose that $\rho=29$, which implies that $d=839$, $|\Gamma_1(\gamma)|=343$ and $|\Gamma_2(\gamma)|=496$ for all $\gamma\in\Omega\backslash\Gamma$.
By Theorem \ref{thm:eqquasi}, $(\Gamma,\B_1)$ is a $2$-$(839,343,58653;147,133)$ quasi-symmetric design.
However, Theorem \ref{thm:eqquasi} also implies that $(\Gamma,\B_1)$ is a $3$-design, and hence  
$$\frac{343\times 342\times 341}{2\times 837}=\frac{71687}{3}$$
must be an integer, which is a contradiction.
\end{proof}

\begin{proof}[Proof of Theorem \ref{thm:main2}]
Let $X$ be a tight spherical $5$-design in $\S^{d-1}$. Then by \cite[Theorem 5.12]{delgoeseid}, $X$ is antipodal, that is $y\in X$ if and only if $-y\in X$. Hence
$X$ defines a set $\Omega$ of $d(d+1)/2$ lines through the origin in $\R^d$. It also follows from \cite[Theorem 5.12]{delgoeseid} that $\Omega$ is a set of equiangular lines.
If $X$ contains a subset $\Gamma$ of $d$ points whose pairwise inner products are positive, then the lines through the points in $\Gamma$ form
a maximal incoherent subset of $\Omega$ of size $d$. The result now follows from Theorem \ref{thm:main}.
\end{proof}

\section{Block sets and Equiangular lines}\label{sec:blocksets}

In this section we show that given a quasi-symmetric design with certain constraints on the parameters, 
one can construct a set of equiangular lines. This will help us prove Theorem \ref{thm:maineqquasi} in Section \ref{sec:thm3}.
However, we first describe a more general construction.

Let $\P$ be a point set of size $d$ and $\B$ be a set of $k$-sets (blocks) of elements from $\P$. If $(\P,\B)$ is such that
any two blocks intersect in either $s_1$ or $s_2$ elements, we show below that, given certain constraints, one can construct a set of equiangular lines from $(\P,\B)$. 
We call $(\P,\B$) a $(d,k;s_1,s_2)$ block set. We assume throughout that $s_1>s_2$, $\B$ is simple (i.e. contains no repeated blocks), 
and that there exists a pair of blocks whose intersection has exactly $s_j$ elements, for $j=1$ and $2$.

\begin{Const}\label{const:1} Let $(\P,\B)$ be a $(d,k;s_1,s_2)$ block set such that
\begin{equation}\label{eq:delta1}\Delta_1=k^2-\frac{d(s_1+s_2)}{2}\geq 0.\end{equation}
For each block $B\in\B$
let $v(B)$ be the vector in $\R^d$ given by
\begin{equation}\label{eq:vb}v(B)_j=\begin{cases}{d-k}+(-1)^{\epsilon_\B}\sqrt{\Delta_1}&\textnormal{ if $j\in B$ }\\
{-k}+(-1)^{\epsilon_\B}\sqrt{\Delta_1}&\textnormal{ if $j\in\P\backslash B$, }
\end{cases}\end{equation}
where $\epsilon_\B=0$ or $1$. We let $$\Omega(\B)=\{v(B)\,|\,B\in\B\}.$$
\end{Const}
For blocks $B_1,B_2\in\B$ with $|B_1\cap B_2|=s$, one calculates that the inner product
$$(v(B_1),v(B_2))=d(sd-k^2+\Delta_1),$$
and so
$$(v(B_1),v(B_2))=
\begin{cases}
d^2(2k-(s_1+s_2))/2&\textnormal{ if $B_1=B_2$}\\
d^2(s_1-s_2)/2&\textnormal{ if $|B_1\cap B_2|=s_1$}\\
-d^2(s_1-s_2)/2&\textnormal{ if $|B_1\cap B_2|=s_2.$}
\end{cases}$$
Thus we have the following.

\begin{Thm}\label{thm:blocksetconst}
Let $(\P,\B)$ be a $(d,k,s_1,s_2)$ block set such that $\Delta_1$ (given in \eqref{eq:delta1}) is non-negative. Then $\Omega(\B)$, given in Construction \ref{const:1}, is a set of vectors in $\R^d$
that span equiangular lines with common angle
$$\kappa=\frac{(s_1-s_2)}{2k-(s_1+s_2)}.$$
\end{Thm}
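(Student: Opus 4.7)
The plan is a direct inner-product computation, organized so that the sign ambiguity $(-1)^{\epsilon_\B}$ decouples cleanly from the combinatorial data. I would write
\begin{equation*}v(B) = c\mathbf{1} + x(B),\end{equation*}
where $c = (-1)^{\epsilon_\B}\sqrt{\Delta_1}$ is a single scalar that is the same for every block, $\mathbf{1}$ denotes the all-ones vector in $\R^d$, and $x(B)$ has $j$-th entry $d-k$ when $j\in B$ and $-k$ otherwise. A one-line count gives $\sum_j x(B)_j = k(d-k) + (d-k)(-k) = 0$, so $x(B)\perp\mathbf{1}$, and therefore
\begin{equation*}(v(B_1),v(B_2)) = c^2 d + (x(B_1),x(B_2))\end{equation*}
for any two blocks $B_1,B_2\in\B$.

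Next, for blocks with $|B_1\cap B_2| = s$ I would partition $\P$ into the four cells $B_1\cap B_2$, $B_1\setminus B_2$, $B_2\setminus B_1$, $\P\setminus(B_1\cup B_2)$ of sizes $s, k-s, k-s, d-2k+s$, and sum the pointwise products to obtain
\begin{equation*}(x(B_1),x(B_2)) = s(d-k)^2 - 2(k-s)k(d-k) + (d-2k+s)k^2.\end{equation*}
Expanding the three terms and collecting, the $s$-free pieces telescope to zero and the rest collapses to $d(sd - k^2)$. Combining with $c^2 = \Delta_1$ yields $(v(B_1),v(B_2)) = d(\Delta_1 + sd - k^2)$, which is precisely the formula asserted in the excerpt just before the theorem.

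Finally, substituting $\Delta_1 = k^2 - d(s_1+s_2)/2$ and specializing to the three relevant values of $s$ (namely $s=k$ when $B_1=B_2$ and $s\in\{s_1,s_2\}$ otherwise) reproduces the case split already written in the text: every $v(B)$ has the same squared norm $d^2(2k-(s_1+s_2))/2$, and $(v(B_1),v(B_2)) = \pm d^2(s_1-s_2)/2$ for $B_1\neq B_2$. The squared norm is strictly positive because distinct $k$-subsets meet in at most $k-1$ points, so $s_1+s_2 < 2k$; the hypothesis $\Delta_1\geq 0$ is exactly what is needed for $c\in\R$. Since all $v(B)$ have equal norm and the off-diagonal inner products differ only in sign, the lines spanned by the $v(B)$ are equiangular with common cosine
\begin{equation*}\kappa = \frac{d^2(s_1-s_2)/2}{d^2(2k-(s_1+s_2))/2} = \frac{s_1-s_2}{2k-(s_1+s_2)}.\end{equation*}
There is no real obstacle: the argument is essentially the bookkeeping the author has already flagged, and the key conceptual step is merely the orthogonal decomposition $v(B) = c\mathbf{1} + x(B)$, which isolates the role of the sign choice $\epsilon_\B$ into the single harmless term $c^2 = \Delta_1$.
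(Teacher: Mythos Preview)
Your proposal is correct and follows essentially the same approach as the paper: a direct inner-product computation arriving at $(v(B_1),v(B_2)) = d(sd - k^2 + \Delta_1)$, followed by specialization to $s\in\{k,s_1,s_2\}$. Your orthogonal decomposition $v(B) = c\mathbf{1} + x(B)$ is a tidy way to organize the bookkeeping and to see at a glance why the sign choice $\epsilon_\B$ drops out, but the paper simply asserts the inner-product formula and the case split without further comment, so you have supplied the details of the same calculation rather than taken a different route.
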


First note that the above theorem is independent of the parameter $\epsilon_\B$. This parameter is relevant in Theorem \ref{thm:const1} below. 
Also note that the vectors given in \eqref{eq:vb} can be defined for $\Delta_1<0$, in which case $v(B)\in\mathbf{C}^d$. However, in this 
case the corresponding lines spanned by these vectors are not necessarily equiangular. 

\begin{Ex}\label{ex:pairs}
Let $(\P,\B)$ be the set of all $2$-sets of $\P$. Then $k=2$, $s_1=1$ and $s_2=0$, so $\Delta_1\geq 0$ if and only if $d\leq 8$, and for these dimensions $\kappa=1/3$.
For $d=4,5$ we find maximal sets of $6$ and $10$ equiangular lines in the respective dimensions. For $d=6$, if we include the vector
$$v=(3,3,3,3,3,3)$$
with the $15$ vectors of $\Omega(\B)$, we find a maximal set of $16$ equiangular lines in $\R^6$. For $d=7$, we saw in Section \ref{sec:examples} 
that we can include $7$ extra vectors whose union with the $21$ vectors of $\Omega(\B)$ give $28$ equiangular lines in $\R^7$. 
Finally, for $d=8$ we find that $\Delta_1=0$. We thus deduce that each $v(B)$ is orthogonal to the all ones vector $\bf{1}\in\R^8$,
and so the set $\Omega(\B)$ of $28$ vectors spans a $7$-dimensional subspace. This representation of the $28$ equiangular lines in $\R^7$ is the one most often given in the literature.
\end{Ex}

\begin{Ex}\label{ex:276}Let $(\P,\B)$ be the $4$-$(23,7,1;3,1)$ quasi-symmetric design. We saw in Section \ref{sec:examples} that we can include $23$ vectors 
to the $253$ vectors of $\Omega(\B)$ to give a maximal set of $276$ equiangular line in $\R^{23}$. For any point $p\in\P$, the residual design $(\P\backslash\{p\},\B^p)$ is 
a $3$-$(22,7,4;3,1)$ quasi-symmetric design, and the corresponding set $\Omega(\B^p)$ gives a maximal set of $176$ equiangular lines in $\R^{22}$. 
\end{Ex}

\begin{Ex}\label{ex:d15} Let $(\P,\B)$ be a Steiner triple system of order $15$, which is a $2$-$(15,3,1;1,0)$ quasi-symmetric design that contains $35$ blocks. 
It is known that there are exactly $80$ non-isomorphic Steiner triple systems of order $15$ \cite{hallswift}. By including $$v=5\sqrt{\frac{3}{2}}{\bf{1}}\in\R^{15}$$
with the $35$ vectors of $\Omega(\B)$, we find a maximal set of $36$ equiangular lines in $\R^{15}$. 
\end{Ex}

\begin{Ex}\label{ex:janko} Let $J_2$ be the Hall-Janko-Wales group \cite{hall}, one of the sporadic simple groups.
This can be described as the automorphism group of the Hall-Janko graph $\G$ (see for example \cite{suzuki}). 
This group has two conjugacy classes of elements of order $3$ \cite{atlas}. Let $3A$ denote the conjugacy class of $J_2$ whose centraliser has order $1080$. 
It is known that in its action on $V\G$, the vertices of $\G$, for any $x\in 3A$ the number
of fixed points of $x$ is $10$ (see for example \cite[Section 3]{buek} or \cite[Remark 2.7]{selfdualcodes}). 
These vertices correspond to a coclique in $\G$, and by \cite[Lemma 2.8]{selfdualcodes} there are 280 cocliques in $\G$. Let $\B$ be the set of cocliques, that is
$$\B=\{\fix(x)\,|\,x\in 3A,\textnormal{ acting on $V\G$}\}$$
By \cite[Lemma 2.9]{selfdualcodes}, $(V\G,\B)$ is a $(100,10; 2,0)$ block set. In this case $\Delta_1=0$, so we know that the lines are in $\R^{99}$. However, using GAP \cite{gap} we find that
$\Omega(\B)$ spans a $63$-dimensional subspace. This is an example of a set lines that saturates the relative bound, 
that is $\Omega(\B)$ gives a maximal set of equiangular lines in $\R^{63}$ with common angle $\kappa=1/9$. We also note that $(V\G,\B)$ is not a $2$-design.
\end{Ex}

In Example \ref{ex:pairs} with $d=7$ and Example \ref{ex:276} with $d=23$, we were able to include an additional $d$ lines along with the lines constructed from the $(d,k;s_1,s_2)$ block set 
and still maintain the equiangular property. Indeed, the extra $d$ lines enabled us to saturate the incoherence bound. 
The next result shows that we can repeat this trick for certain parameter choices. First, however, we rewrite the parameters of
the designs in Theorem \ref{thm:maineqquasi}-iv) in terms of the intersection numbers.
\begin{Rem}\label{rem:intermsints}
Let $d,k,\rho, s_1,s_2$ and $\lambda$ be as in Theorem \ref{thm:maineqquasi}-iv).
By subtracting $s_2$ from $s_1$, we solve to give \begin{equation}\label{eq:rho}\rho=\rho(s_1,s_2)=2(s_1-s_2)+1\end{equation}
and by adding $s_1$ and $s_2$, we find
\begin{equation}\label{eq:k}k=k(s_1,s_2)=(s_1-s_2)^2+s_1.\end{equation}
Putting these values for $k$ and $\rho$ into \eqref{eq:taypoly} gives
\begin{equation}\label{eq:d}d=d(s_1,s_2)=\frac{(m^2+m+s_1)^2}{s_1}-2m\end{equation}
where $m=(s_1-s_2)$. Thus
\begin{equation}\label{eq:L}\lambda(s_1,s_2)=\frac{s_1(m^2+s_1)(m^2+s_1-1)}{s_1(2m(m+1)+1)-m^4-2m^3-s^2_2}.\end{equation}
\end{Rem}

\begin{Thm}\label{thm:const1} Let $(\P,\B)$ be a $(d,k;s_1,s_2)$ block set, where $d=d(s_1,s_2)$, $k=k(s_1,s_2)$ as in \eqref{eq:d}, \eqref{eq:k} respectively, and 
let $\Omega(\B)$ be the set of vectors constructed from $(\P,\B)$ as in Construction \ref{const:1}.
Then $\Omega(\B)$ spans a set of equiangular lines in $\R^d$. 
Moreover, for a specific choice of $\epsilon_\B$, there exist $d$ vectors in $\R^d$ whose union with $\Omega(\B)$ gives a set $(|\B|+d)$ vectors that span a set $\Omega$ of equiangular lines in $\R^d$
with common angle $\kappa=1/(2(s_1-s_2)+1)$ such that $\Inc(\Omega)=d$.
\end{Thm}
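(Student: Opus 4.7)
The plan is to apply Theorem \ref{thm:blocksetconst} to the block set $(\P,\B)$, then augment $\Omega(\B)$ with $d$ explicitly defined vectors modeled on the construction in Section \ref{sec:examples}, and verify by direct computation that all inner products take the required form. First I would check that $\Delta_1 = k^2 - d(s_1+s_2)/2 \geq 0$ when $(d,k) = (d(s_1,s_2),k(s_1,s_2))$ of Remark \ref{rem:intermsints}; this is a purely algebraic verification using \eqref{eq:k} and \eqref{eq:d}. Then Theorem \ref{thm:blocksetconst} gives immediately that $\Omega(\B)$ spans equiangular lines with angle $(s_1-s_2)/(2k-(s_1+s_2))$, which substituting $k = (s_1-s_2)^2 + s_1$ reduces to $1/(2(s_1-s_2)+1) = \kappa$.

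Next I would define, for each $i \in \P$, the vector
$$v(i)_j = \begin{cases}(s_1-s_2)(d-1)-\sqrt{\Delta_2} & \text{if } j=i,\\ -(s_1-s_2)-\sqrt{\Delta_2} & \text{if } j\neq i,\end{cases}$$
with $\Delta_2 = (k-s_1) + d(s_1-s_2)/2$, after verifying $\Delta_2 \geq 0$. The goal is to compute the three types of inner products: $(v(i),v(j))$ for $i\ne j$, $(v(i),v(B))$ depending on whether $i\in B$, and the self-norms $\|v(i)\|^2$, $\|v(B)\|^2$. Setting $a=(s_1-s_2)(d-1)-\sqrt{\Delta_2}$ and $b=-(s_1-s_2)-\sqrt{\Delta_2}$, one finds $a-b = (s_1-s_2)d$ and $a+(d-1)b = -d\sqrt{\Delta_2}$, from which a routine calculation gives
$$(v(i),v(B)) = \begin{cases} d\bigl((s_1-s_2)(d-k) - (-1)^{\epsilon_\B}\sqrt{\Delta_1\Delta_2}\bigr) & \text{if } i\in B,\\ -d\bigl((s_1-s_2)k + (-1)^{\epsilon_\B}\sqrt{\Delta_1\Delta_2}\bigr) & \text{if } i\notin B.\end{cases}$$
For equiangularity these must have the same absolute value, and expanding this condition reduces to the identity $(s_1-s_2)^2(d-2k)^2/4 = \Delta_1\Delta_2$, with the correct choice of $\epsilon_\B \in \{0,1\}$ being forced by the sign of $d-2k$.

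The crux of the work is therefore to verify two algebraic identities in $s_1,s_2$ using \eqref{eq:k} and \eqref{eq:d}: first, $(s_1-s_2)^2(d-2k)^2/4 = \Delta_1\Delta_2$, and second, the equiangular normalisation $(v(i),v(B))^2 = \kappa^2\|v(i)\|^2\|v(B)\|^2$ and $(v(i),v(j))^2 = \kappa^2\|v(i)\|^2\|v(j)\|^2$ for $i\neq j$. These identities should fall out of the definition of $d = d(s_1,s_2)$ as the relevant root of the quadratic \eqref{eq:taypoly} with $\rho = 2(s_1-s_2)+1$, and I expect this to be the main algebraic obstacle in the proof.

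Finally, to establish $\Inc(\Omega)=d$, set $\Gamma = \{v(i) : i\in\P\}$. Since $(v(i),v(j))$ takes the same value $c$ for every pair $i \neq j$, after replacing each $v(i)$ by a unit multiple with sign chosen so the common inner product is positive, the triple product $(v(i),v(j))(v(i),v(k))(v(j),v(k)) = c^3 > 0$ for any three distinct indices, so $\Gamma$ is incoherent. Linear independence follows because the Gram matrix of $\Gamma$ has the form $(\|v\|^2 - c)I + cJ$, which is nonsingular provided $c \neq \|v\|^2$ and $c \neq -\|v\|^2/(d-1)$; both inequalities will follow from the computed values. Combined with the upper bound $\Inc(\Omega) \leq d$ of Lemma \ref{lem:incupbd}, this gives $\Inc(\Omega) = d$, completing the proof.
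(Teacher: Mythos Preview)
Your proposal is correct and follows essentially the same approach as the paper: verify $\Delta_1 \geq 0$, apply Theorem \ref{thm:blocksetconst}, adjoin the vectors $v(i)$ (your $\Delta_2$ agrees with the paper's $m(2m+d)/2$ once $k-s_1 = m^2$ is substituted), and compute all pairwise inner products directly. The only cosmetic difference is that the paper carries out the algebra explicitly---writing $m=s_1-s_2$, it finds the closed form $\Delta_1 = m(m^2-s_2)^2/(2s_1)$ and shows that every inner product among the $v(B)$ and $v(i)$ equals $\pm d^2m/2$ with common squared norm $d^2m(2m+1)/2$, which makes the angle $1/(2m+1)$ and the incoherence of $\Gamma$ immediate---whereas you phrase the same verification via the identity $m^2(d-2k)^2/4 = \Delta_1\Delta_2$ and a ratio check.
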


\begin{proof}
Let $d=d(s_1,s_2)$ and $k=k(s_1,s_2)$. Setting $m=s_1-s_2$ and substituting in the values for $d$ and $k$, one finds that
$$\Delta_1=\frac{m(m^2-s_2)^2}{2s_1}\geq 0,$$
so by Theorem \ref{thm:blocksetconst}, $\Omega(\B)$ spans a set of equiangular lines in $\R^d$ with common angle $\kappa=1/(2m+1)$.
However, whereas in Construction \ref{const:1} it did not matter if we chose $\epsilon_\B=0$ or $1$, for us to include an extra $d$ lines we have to be more precise. 
In particular, we let $v(B)$ be the vector in $\R^d$ such that
$$v(B)_j=\begin{cases}{d-k}+\Delta&\textnormal{ if $j\in B$ }\\
{-k}+\Delta&\textnormal{ if $j\in\P\backslash B$, }
\end{cases}$$
where $$\Delta=\sqrt{m}(m^2-s_2)/\sqrt{2s_1}.$$ We observe that $\Delta<0$ if and only if $m^2<s_2$, which in turn holds if and only if $k>d/2$.
For $B_1,B_2\in\B$ we calculate that 
$$(v(B_1),v(B_2))=
\begin{cases}
d^2m(2m+1)/2&\textnormal{ if $B_1=B_2$}\\
d^2m/2&\textnormal{ if $|B_1\cap B_2|=s_1$}\\
-d^2m/2&\textnormal{ if $|B_1\cap B_2|=s_2.$}
\end{cases}$$

Now for $i\in\P$ let $v(i)$ denote the vector in $\R^d$ given by
$$v(i)_j=\begin{cases}(s_1-s_2)(d-1)-\sqrt{\Delta_2}&\textnormal{ if $j=i$ }\\
-(s_1-s_2)-\sqrt{\Delta_2}&\textnormal{ if $j\neq i$ }
\end{cases}$$
where $$\Delta_2=\frac{m(2m+d)}{2}.$$
One calculates that $$(v(i),v(i))=d(m^2(d-1)+\Delta_2)=d^2m(2m+1)/2,$$
and for distinct $i,j\in\P$,
$$(v(i),v(j))=d(\Delta_2-m^2)=d^2m/2.$$
Finally, for $i\in\P$ and $B\in\B$, with $s=|B\cap\{i\}|$, 
$$(v(B),v(i))=m(sd-k)d-d\Delta\sqrt{\Delta_2}=
\begin{cases}
d^2m/2&\textnormal{ if $s=1$}\\
-d^2m/2&\textnormal{ if $s=0$.}
\end{cases}$$
It follows that $\Omega=\{v(B)\,|\,B\in\B\}\cup\{v(i)\,|\,i\in\P\}$ is a set of $|\B|+d$ vectors that span equiangular lines in $\R^d$ with common angle $\kappa=1/(2m+1)$
and $\Gamma=\{v(i)\,|\,i\in\P\}$ is an incoherent subset of size $d$.
\end{proof}

\begin{Cor}\label{cor:thmcon} If in addition $(\P,\B)$ is a quasi-symmetric design where every pair is in 
$\lambda(s_1,s_2)$ blocks (given in \eqref{eq:L}) then $|\Omega|$ saturates
the relative bound. In particular, $(\Omega,\C)$ is a regular two-graph. 
\end{Cor}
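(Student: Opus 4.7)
The plan is to compute $|\Omega|=|\B|+d$ explicitly from the 2-design counting identities and compare this with the relative bound expressed in terms of $\rho=2m+1$, where $m=s_1-s_2$, as in Theorem \ref{thm:const1}. Since $(\P,\B)$ is a 2-$(d,k,\lambda)$ design, the standard Fisher counting identity yields
\begin{equation*}
|\B|=\frac{\lambda\, d(d-1)}{k(k-1)}.
\end{equation*}
With the substitutions $\rho=2m+1$, $k=m^2+s_1$, and $d=(m^2+m+s_1)^2/s_1-2m$ from \eqref{eq:rho}, \eqref{eq:k}, \eqref{eq:d}, the key algebraic step will be to verify the identity
\begin{equation*}
\lambda(s_1,s_2)\;=\;\frac{k(k-1)}{\rho^2-d},
\end{equation*}
where $\lambda(s_1,s_2)$ is the expression in \eqref{eq:L}. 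This is a routine (though slightly tedious) polynomial identity in the variables $s_1,s_2$: writing $s_2=s_1-m$, expanding $\rho^2-d$ over the common denominator $s_1$, and comparing the result with the denominator in \eqref{eq:L} — both expressions simplify to $2s_1 m^2+4s_1 m+s_1-m^4-2m^3-m^2-s_1^2$ — confirms the identity.

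Once this identity is in hand, the calculation proceeds mechanically. Substituting into the formula for $|\B|$ gives $|\B|=d(d-1)/(\rho^2-d)$, and hence
\begin{equation*}
|\Omega|\;=\;|\B|+d\;=\;\frac{d(d-1)}{\rho^2-d}+d\;=\;\frac{d(\rho^2-1)}{\rho^2-d}.
\end{equation*}
Since this is precisely the relative upper bound of Theorem \ref{thm:relbd} for common angle $\kappa=1/\rho$, the set $\Omega$ saturates it. Note that positivity of $\lambda(s_1,s_2)$ (forced by $\B$ being a non-trivial design) guarantees $\rho^2-d>0$, i.e.\ $\kappa^2 d<1$, so Theorem \ref{thm:relbd} applies.

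For the second assertion, once saturation of the relative bound with $\kappa^2 d<1$ is established, Theorem \ref{thm:relbd} directly gives that the associated two-graph $(\Omega,\C)$ is regular. The main obstacle in the proof is therefore the algebraic verification of the identity $\lambda(s_1,s_2)=k(k-1)/(\rho^2-d)$; everything else is bookkeeping with the parameters already introduced in Remark \ref{rem:intermsints} and Theorem \ref{thm:const1}.
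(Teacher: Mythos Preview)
Your proposal is correct and follows essentially the same route as the paper: compute $|\Omega|=|\B|+d$ via the 2-design identity $|\B|=\lambda d(d-1)/\bigl(k(k-1)\bigr)$, verify that this equals $d(\rho^2-1)/(\rho^2-d)$, and invoke Theorem~\ref{thm:relbd}. The one point of difference is how the hypothesis $\kappa^2 d<1$ (equivalently $\rho^2>d$) of Theorem~\ref{thm:relbd} is secured. The paper appeals to Calderbank's inequality (Theorem~\ref{thm:calder}), substituting $x=k-s_1$, $y=k-s_2$ to obtain the expression \eqref{eq:fuse}, which forces $\rho^2\geq d+2$. Your argument is more elementary: once the identity $\lambda=k(k-1)/(\rho^2-d)$ is in hand, positivity of $\lambda$ (which is automatic for a genuine 2-design) together with $k\geq 2$ immediately gives $\rho^2-d>0$. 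Both are valid; yours avoids the extra machinery, while the paper's yields the slightly stronger conclusion $\rho^2\geq d+2$ that is reused elsewhere.
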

\begin{proof}
Let $d=d(s_1,s_2)$, $k=k(s_1,s_2)$ and $\lambda=\lambda(s_1,s_2)$. As $(\P,\B)$ is a quasi-symmetric design, Theorem \ref{thm:calder} states that
$$f(d,k,x,y)\geq 0$$
where $f$ is given in \eqref{eq:calder} and $x=k-s_1$, $y=k-s_2$. Letting $\rho=\kappa^{-1}=2m+1$ and substituting in the values for $d,k,x,y$,
we calculate that $f(d,k,x,y)$ is equal to the expression given in \eqref{eq:fuse}. Thus $\rho^2>d$, and one further calculates that
$$|\Omega|=\frac{d(d-1)}{k(k-1)}\lambda+d=\frac{d(\rho^2-1)}{\rho^2-d}.$$
Theorem \ref{thm:relbd} now implies that $(\Omega,\C)$ is a regular two-graph.
\end{proof}

\section{Proof of Theorem \ref{thm:maineqquasi}}\label{sec:thm3}

In Sections \ref{sec:maxinc} and \ref{sec:blocksets} we proved most of the results required to prove Theorem \ref{thm:maineqquasi}. 
However, before we can complete the proof in this section, we must deal with
the case where $|\Gamma_1(\gamma)|=d/2$ for all $\gamma\in\Omega\backslash\Gamma$. 
In Theorem \ref{thm:eqquasi} we dealt with the case $|\Gamma_1(\gamma)|<d/2$ for all $\gamma\in\Omega\backslash\Gamma$. This allowed us to apply Theorem \ref{thm:linedes} 
and construct a quasi-symmetric design on $\Gamma$. 
When $|\Gamma_1(\gamma)|=d/2$, we are unable to do this. This is in part because we cannot distinguish between $\Gamma_1(\gamma)$ and $\Gamma_2(\gamma)$ 
by considering their size alone. However, we are still able to find a quasi-symmetric design in this case.

\begin{Thm}\label{thm:deq2k}Let $\Omega$ be a set of equiangular lines in $\R^d$ such that $(\Omega,\C)$ is a regular two-graph with parameters $(n,a,b)$
and $\Inc(\Omega)=d$, and let $\Gamma\subseteq\Omega$ be an incoherent subset of $d$ lines and $\rho=\kappa^{-1}$. If
$|\Gamma_1(\gamma)|=d/2$ for some $\gamma\in\Omega\backslash\Gamma$, then $d=\rho(\rho-1)$, $|\Omega|=(\rho^2-1)(\rho-1)$, and either
\begin{itemize}
\item[i)] $\rho=2$ and $\Omega$ is isometric to the set of $3$ equiangular lines in $\R^2$; or
\item[ii)] $(\Gamma,\B)$ is a $3$-$(d,d/2,n-d-3a/2)$ design, where
$$\B=\{\Gamma_i(\gamma)\,|\,\gamma\in\Omega\backslash\Gamma, i=1,2\};$$ for $\alpha\in\Gamma$, the derived design $(\Gamma\backslash\{\alpha\},\B_\alpha)$ 
is a quasi-symmetric $$2-(d-1,d/2-1,n-d-3a/2;s_1-1,s_2-1)$$ design; and the residual design $(\Gamma\backslash\{\alpha\},\B^\alpha)$ is a quasi-symmetric 
$$2-(d-1,d/2,a/2;s_1,s_2)$$ design,
where \begin{equation}\label{eq:s1s2}s_1=\frac{(\rho^2-1)}{4},\,\,\,s_2=\frac{(\rho-1)^2}{4}.\end{equation}
\end{itemize}
\end{Thm}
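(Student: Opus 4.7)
The plan is to first extract the numerical consequences of the hypothesis $|\Gamma_1(\gamma)|=d/2$, then build the $3$-design on $\Gamma$, and finally derive the quasi-symmetric structures on the derived and residual designs at a point of $\Gamma$.

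For the numerics, Proposition \ref{prop:tayeq} shows that $|\Gamma_1(\gamma)|=d/2$ is equivalent to the discriminant in \eqref{eq:tayeq} vanishing, i.e.\@ $d^{2}=(\rho-1)^{2}(d+\rho)$. Solving the resulting quadratic in $d$ and discarding the negative root gives $d=\rho(\rho-1)$. Since $(\Omega,\C)$ is regular and $\kappa^{2}d=(\rho-1)/\rho<1$, Theorem \ref{thm:relbd} yields $|\Omega|=d(\rho^{2}-1)/(\rho^{2}-d)=(\rho-1)(\rho^{2}-1)$. The case $\rho=2$ forces $d=2$ and $|\Omega|=3$, from which case (i) follows by the uniqueness of three equiangular lines in $\R^{2}$.

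Assuming $\rho\geq 3$, to show $(\Gamma,\B)$ is a $3$-design I would fix a triple $\{\alpha,\beta,\delta\}\subseteq\Gamma$ and count the blocks of $\B$ containing it. By Lemma \ref{lem:simple}, the triple lies in one part of the partition $\Gamma=\Gamma_{1}(\gamma)\cup\Gamma_{2}(\gamma)$ exactly when each of the three pairs combined with $\gamma$ is incoherent, i.e.\@ when $\{\gamma,\alpha,\beta,\delta\}$ is an incoherent $4$-set; in that case the triple lies in exactly one of $\Gamma_{1}(\gamma),\Gamma_{2}(\gamma)$. The complementary regular two-graph counts incoherent $4$-sets through a given incoherent $3$-set as $b^{*}=n/2-b-3$; subtracting the $d-3$ contributions from $\gamma\in\Gamma\backslash\{\alpha,\beta,\delta\}$ (each of which automatically gives an incoherent $4$-set by incoherence of $\Gamma$) yields $\lambda_{3}=b^{*}-(d-3)=n-d-3a/2$.

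For the two intersection numbers, Proposition \ref{prop:tayint} gives $|\Gamma_{1}(\gamma)\cap\Gamma_{1}(\delta)|=d/2-\Delta$ with $\Delta\in\{(\rho-1)^{2}/4,(\rho^{2}-1)/4\}$, and hence $|\Gamma_{1}(\gamma)\cap\Gamma_{2}(\delta)|=\Delta$; the identity $(\rho-1)^{2}/4+(\rho^{2}-1)/4=\rho(\rho-1)/2=d/2$ shows that every pair of distinct blocks in $\B$ meets in exactly $s_{1}=(\rho^{2}-1)/4$ or $s_{2}=(\rho-1)^{2}/4$ points. The derived and residual designs at $\alpha\in\Gamma$ then follow by standard design arithmetic. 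The derived design inherits $\lambda_{3}$, giving a $2$-$(d-1,d/2-1,n-d-3a/2)$ design whose intersection numbers drop to $s_{1}-1,s_{2}-1$. For the residual parameter, I would count blocks of $\B$ through a pair $\{\alpha,\beta\}\subseteq\Gamma$ using Lemma \ref{lem:simple}: these correspond to $\gamma\in\Omega\backslash\Gamma$ with $\gamma\notin S_{\alpha\beta}$, and since $\Gamma\cap S_{\alpha\beta}=\emptyset$ by incoherence of $\Gamma$, this gives $\lambda_{2}=|\Omega|-d-a=n-d-a$, so $\lambda_{2}-\lambda_{3}=a/2$. The main subtlety I expect is verifying that \emph{both} intersection numbers actually occur in the derived and residual designs, so that they are genuinely quasi-symmetric rather than $1$-intersection; I would rule out the degenerate case for $\rho\geq 3$ by a Fisher-type block-count argument using the formula $|\Omega|-d=(\rho-1)(\rho^{2}-1)-d$ together with the block sizes.
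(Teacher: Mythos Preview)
Your overall strategy is sound, and your count of $\lambda_{3}$ via the complementary two-graph parameter $b^{*}$ is a clean alternative to the paper's route. The paper instead argues through the sets $S_{\alpha\beta}$: for an incoherent triple $\{\alpha,\beta,\gamma\}\subseteq\Gamma$ it uses Theorem~\ref{thm:desint1} to get $|S_{\alpha\beta}\cup S_{\alpha\gamma}|=3a/2$, observes that the $\gamma'\in\Omega\backslash(\Gamma\cup S_{\alpha\beta}\cup S_{\alpha\gamma})$ are exactly those for which the triple lies in a single $\Gamma_i(\gamma')$, and reads off $\lambda_{3}=n-d-3a/2$ directly. Your argument reaches the same number by computing $b^{*}-(d-3)$ and using $n=3a-2b$; both are short, and yours avoids invoking Theorem~\ref{thm:desint1}.

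There is, however, one genuine slip in your treatment of the intersection numbers. Proposition~\ref{prop:tayint} only applies to \emph{distinct} $\gamma,\delta\in\Omega\backslash\Gamma$, and your conclusion that ``every pair of distinct blocks in $\B$ meets in exactly $s_{1}$ or $s_{2}$ points'' is false as stated: for each $\gamma$ the blocks $\Gamma_{1}(\gamma)$ and $\Gamma_{2}(\gamma)$ are both in $\B$ and are complementary, so they meet in $0$ points. Thus $\B$ has three intersection numbers $0,s_{1},s_{2}$, and the paper makes this explicit, noting that $|B_{1}\cap B_{2}|=0$ occurs precisely when $B_{2}=\Gamma\backslash B_{1}$. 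The fix is easy but must be said: in the derived design $\B_{\alpha}$ every block contains $\alpha$, so a complementary pair cannot both appear there, and the intersection numbers collapse to $s_{1}-1,s_{2}-1$; dually, in $\B^{\alpha}$ every block omits $\alpha$, so again no complementary pair survives and the intersection numbers are $s_{1},s_{2}$. Without this observation the passage from $\B$ to the derived and residual designs does not go through. Your flagged subtlety about both intersection numbers actually occurring is handled exactly as you suggest (Fisher's inequality, since $|\B_{\alpha}|=n-d>d-1$ for $\rho\geq 3$); the paper itself leaves this implicit.
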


\begin{proof}
Since $|\Gamma_1(\gamma)|=d/2$, it follows from Proposition \ref{prop:tayeq} that 
$$d^2-(\rho-1)^2(d+\rho)=0.$$
In particular, $d$ is one of the solutions to the equation
$$x^2-(\rho-1)^2x-\rho(\rho-1)^2=0,$$
which are $\rho(\rho-1)$ or $(1-\rho)$. As $d> 1$, we conclude that $d=\rho(\rho-1)$ and hence $\rho^2>d$. 
It now follows from Theorem \ref{thm:relbd} that $|\Omega|=(\rho-1)(\rho^2-1)$. Clearly if $\rho=2$ then i) holds.

We now suppose that $\rho>2$, so $d>2$. Let $\{\alpha,\beta,\gamma\}$ be a $3$-set of elements in $\Gamma$. We deduce from Theorem \ref{thm:desint1} that
$$S_{\alpha\beta}\cup S_{\alpha\gamma}=S_{\alpha\beta}\cup S_{\beta\gamma}=S_{\alpha\gamma}\cup S_{\beta\gamma}$$
and
$$|S_{\alpha\beta}\cup S_{\alpha\gamma}|=\frac{3a}{2}.$$
Furthermore, by Lemma \ref{lem:simple}, for each $\delta\in S_{\alpha\beta}\cup S_{\alpha\gamma}$, the set $\{\alpha,\beta,\gamma\}$ has a non-trivial
intersection with both $\Gamma_1(\delta)$ and $\Gamma_2(\delta)$. Additionally, for each $\delta\in\Omega\backslash(\Gamma\cup S_{\alpha\beta}\cup S_{\alpha\gamma})$,
$\{\alpha,\beta,\gamma\}$ is entirely contained in either $\Gamma_1(\delta)$ or $\Gamma_2(\delta)$. We conclude that $(\Gamma,\B)$ is a $3$-$(d,d/2,n-d-3a/2)$ block design.

Let $\alpha\in\Gamma$ and consider the derived design $(\Gamma\backslash\{\alpha\},\B_\alpha)$,
where $$\B_\alpha=\{B\backslash\{\alpha\}\,|\,B\in\B\textnormal{ and }\alpha\in B\}.$$
This is a $2$-$(d-1,d/2-1,n-d-3a/2)$ design \cite[Definition 1.32]{camvan}.
Now consider distinct elements $B_1,B_2\in\B$. As $|B_1|=|B_2|=d/2$, it follows that $|B_1\cap B_2|=0$ if and only if $B_2=\Gamma\backslash B_1$.
As $s_1+s_2=d/2$, we deduce from Proposition \ref{prop:tayint} that $|B_1\cap B_2|=0,s_1$ or $s_2$, where $s_1,s_2$ are as in \eqref{eq:s1s2}.
We therefore conclude that two blocks of $\B_\alpha$ have either $s_1-1$ or $s_2-1$ elements in common, proving the result for the derived design. 
Similarly, for $\alpha\in\Gamma$, one deduces that the residual design $(\Gamma\backslash\{\alpha\},\B^\alpha)$ with respect to $\alpha$, where
$$\B^\alpha=\{B\,|\,B\in\B\textnormal{ and }\alpha\notin B\},$$
is a quasi-symmetric $2$-$(d-1,d/2,a/2;s_1,s_2)$ design.
\end{proof}

\begin{Rem}\label{rem:paramsdes}
We can describe the designs that appear in Theorem \ref{thm:deq2k}-ii) in terms of one parameter. 
In this case $d=\rho(\rho-1)\geq 4$, where $\rho=\kappa^{-1}$, and so $|\Omega|>2d$. Thus, by Theorem \ref{thm:rhoodd}, $\rho$ is an odd integer. Let $$\rho=2i+1$$
for some integer $i$. Then $$d=2i(2i+1),\,\, n=8i^2(i+1),\,\,\,s_1=i^2+i,\,\,\,s_2=i^2.$$ 
Moreover, it follows that \eqref{eq:setsum2} is equal to $(n-d)d/2$, which we solve to give
$$a=\frac{d(n-d)}{2(d-1)}=2i^2(2i+1).$$
Therefore the design $(\Gamma,\B)$ is a 
\begin{equation}\label{eq:conv1}3-(2i(2i+1),i(2i+1),i(2i^2+i-2))\end{equation} 
design; the derived design is a 
\begin{equation}\label{eq:conv2}2-(2i(2i+1)-1,(2i-1)(i+1),i(2i^2+i-2);i^2+i-1,i^2-1)\end{equation} 
quasi-symmetric design; and the residual design is a
\begin{equation}\label{eq:conv3}2-(2i(2i+1)-1,i(2i+1),i^2(2i+1); i^2+i,i^2)\end{equation} 
quasi-symmetric design.
\end{Rem}

\begin{Thm}\label{thm:desdeq2k} Suppose a quasi-symmetric with the parameters in \eqref{eq:conv2} exists for some integer $i\geq 1$. 
Then there exists a set $\Omega$ of $(\rho^2-1)(\rho-1)$ equiangular lines in $\R^d$ with common angle $\kappa=\rho^{-1}$ and $\Inc(\Omega)=d$, 
where $d=\rho(\rho-1)$ and $\rho=2i+1$. In particular, $\Omega$ saturates the relative bound, the incoherence bound, and $(\Omega,\C)$ is a regular two-graph.
\end{Thm}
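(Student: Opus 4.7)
The plan is to lift the hypothesised quasi-symmetric design on $d-1$ points to a $(d,d/2;s_1,s_2)$ block set on $d$ points, and then invoke Theorem \ref{thm:const1} to manufacture the lines. The crucial observation is that adjoining a single new common point to every block of the quasi-symmetric design in \eqref{eq:conv2} shifts the two intersection numbers up by one, producing precisely the values $s_1 = i^2+i$ and $s_2 = i^2$ singled out in Theorem \ref{thm:deq2k}-ii) for $\rho = 2i+1$.

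First I would take $(\P',\B')$ to be a quasi-symmetric design with parameters \eqref{eq:conv2}, pick $\infty \notin \P'$, and form
$$\P = \P' \cup \{\infty\}, \qquad \B = \{B' \cup \{\infty\} \,:\, B' \in \B'\}.$$
Each new block has size $k'+1 = i(2i+1) = d/2$, and two distinct blocks meet in $|B'_1 \cap B'_2| + 1 \in \{s_1, s_2\}$ points, so $(\P,\B)$ is a $(d,d/2;s_1,s_2)$ block set. A short calculation with $m = s_1 - s_2 = i$ and $s_1 = i(i+1)$ verifies the parametric identities $d = d(s_1,s_2)$ and $d/2 = k(s_1,s_2)$ of Remark \ref{rem:intermsints}, so Theorem \ref{thm:const1} applies and furnishes a set $\Omega$ of $|\B| + d$ equiangular lines in $\R^d$ with common angle $\kappa = 1/\rho$, together with an explicit incoherent subset of size $d$.

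It then remains to match $|\Omega|$ with the relative bound. From $|\B'| = \lambda'(d-1)(d-2)/(k'(k'-1))$ and the parameters in \eqref{eq:conv2}, the factors telescope to give $|\B'| = 2i(4i^2+2i-1)$, whence $|\Omega| = 8i^2(i+1) = (\rho-1)(\rho^2-1)$. The identity $\rho^2 - d = \rho$, which is nothing but $d = \rho(\rho-1)$, then shows $|\Omega|$ equals the relative bound $d(\rho^2-1)/(\rho^2-d)$, and Theorem \ref{thm:relbd} delivers the regularity of $(\Omega, \C)$. Since the incoherent subset produced by Theorem \ref{thm:const1} already has size $d$ and the incoherence bound gives $\Inc(\Omega) \leq d$, we conclude $\Inc(\Omega) = d$.

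The argument is essentially bookkeeping: the only genuinely non-obvious step is recognising that the shift $s'_j \mapsto s'_j+1$ aligns the given derived design with the block-set hypotheses of Theorem \ref{thm:const1}; after that, all the parametric identities collapse to straightforward polynomial identities in $i$, and no real obstacle arises.
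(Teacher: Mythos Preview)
Your proposal is correct and follows essentially the same argument as the paper: adjoin a new point to every block of the quasi-symmetric design \eqref{eq:conv2} to obtain a $(d,d/2;i^2+i,i^2)$ block set satisfying the parametric hypotheses of Theorem~\ref{thm:const1}, then count that $|\Omega|=8i^2(i+1)=(\rho^2-1)(\rho-1)$ matches the relative bound and invoke Theorem~\ref{thm:relbd}. The only difference is that you spell out the block count $|\B'|=2i(4i^2+2i-1)$ and the identity $\rho^2-d=\rho$ explicitly, whereas the paper leaves these as routine calculations.
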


\begin{proof}
Let $(\P,\B)$ be a quasi-symmetric design with the parameters in \eqref{eq:conv2}. 
Let $\alpha$ be an extra point and consider the block set $(\P\cup\{\alpha\},\hat{\B})$ where
$$\hat{\B}=\{B\cup\{\alpha\}\,|\, B\in\B\}.$$
This is a $(2i(2i+1),i(2i+1);i^2+i,i^2)$ block set, and in particular, 
$$2i(2i+1)=d(s_1,s_2),\,\,\,i(2i+1)=k(s_1,s_2),$$ 
where $s_1=i^2+i$, $s_2=i^2$, and $d(s_1,s_2)$, $k(s_1,s_2)$ 
are as in \eqref{eq:d} and \eqref{eq:k}. Thus we can apply Theorem \ref{thm:const1} to $(\P\cup\{\alpha\},\hat{\B})$ giving a set $\Omega$ of $(|\hat{\B}|+d)=(\rho^2-1)(\rho-1)$ 
equiangular lines in $\R^d$ with common angle $\kappa$ such that $\Inc(\Omega)=d$, where $d=\rho(\rho-1)$ and $\kappa^{-1}=\rho=2i+1$. 
One calculates that $|\Omega|$ saturates the relative bound, and as $\rho^2>d$, $(\Omega,\C)$ is therefore a regular two-graph by Theorem \ref{thm:relbd}.
\end{proof}

\begin{proof}[Proof of Theorem \ref{thm:maineqquasi}] We deduce that Theorem \ref{thm:maineqquasi} is a direct consequence of Theorem \ref{thm:relbd},
Theorem \ref{thm:eqquasi}, Theorem \ref{thm:const1}, Corollary \ref{cor:thmcon}, 
Theorem \ref{thm:deq2k} and Theorem \ref{thm:desdeq2k}.
\end{proof}

\section{Proof of Theorem \ref{thm:main4}}\label{sec:main4}

Up to isometry, the only known sets of equiangular lines that saturate the relative and incoherence bounds are the sets of lines given in
Section \ref{sec:examples}. Let us look at the examples in dimensions $d\geq 4$ in more detail. 

\begin{Ex}\label{ex:16lines} Let $\Omega$ be the set of $16$ equiangular lines in $\R^{6}$. Then the regular two-graph $(\Omega,\C)$ can be identified with
the symplectic regular two-graph on $V$, the $4$-dimensional vector space over the binary field $\mF_2$ \cite{taylor}. 
Given a non-degenerate alternating bilinear form $\varphi$ on $V$, 
the symplectic regular two-graph on $V$ is $(V,\C_V)$, where $$\{v_1,v_2,v_3\}\in \C_V\iff\varphi(v_1,v_2)+\varphi(v_1,v_3)+\varphi(v_2,v_3)=0.$$
The automorphism group of this two-graph is isomorphic to $N:\Sp(4,2)$ acting $2$-transitively on $V$, 
where $N$ is the group of translations of $V$ and $\Sp(4,2)$ is the group generated by the transvections of $V$. Let $\Gamma$ be a maximal incoherent 
$6$-set of $V$. Then without loss of generality we can assume that $$\Gamma=\{{\bf{0}},v_1,v_2,v_3,v_4,v_5\}$$
where $\varphi(v_i,v_j)=1$ for all $i\neq j$. Let $H$ be the stabiliser in $N:\Sp(4,2)$ of $\Gamma$. One calculates that for each $v\in\Gamma$,
$n_vt_v\in H$, where for all $u\in V$
$$n_v(u)=u+v,\,\,\,\,t_v(u)=u+\varphi(v,u)v.$$
Furthermore, $n_vt_v$ interchanges $\bf{0}$ and $v$ and fixes all other elements in $\Gamma$. Thus the induced action of $H$ on $\Gamma$ is isomorphic to
the natural action of the symmetric group $\Sym(6)$ on six points. Thus it follows that
$$|S_{v_{i_1}v_{i_2}}\cap S_{v_{i_3}v_{i_4}}|$$ is constant
on all $4$-sets $\{v_{i_1},v_{i_2},v_{i_3},v_{i_4}\}$ of elements in $\Gamma$.
\end{Ex}

\begin{Ex}
Let $\Omega$ be the maximal set of equiangular lines in $\R^d$, for $d=7$ or $23$, and let $\Gamma$ be a maximal incoherent subset of $d$ lines.
In the proof of Theorem \ref{thm:eqquasi} we showed that $(\Gamma,\B_1)$ is a quasi-symmetric design; for $d=7$ it is the $2$-$(7,2,1;1,0)$ design and for $d=23$ it
is the $2$-$(23,7,21;3,1)$ design.
Let $G$ be the automorphism group of $(\Gamma,\B_1)$. Then it is well known that $G\cong M_{23}$, the Mathieu Group on $23$ points, if $d=23$ (see for example \cite[Theorem 6.7B]{dixmort}), and it is straightforward to see that $G\cong\Sym(7)$ for $d=7$. 
In both cases $G$ acts $4$-transitively on $\Gamma$.
We thus conclude that $$|S_{\alpha\beta}\cap S_{\eta\nu}|$$ is constant on all $4$-subsets $\{\alpha,\beta,\eta,\nu\}$ of $\Gamma$.
\end{Ex}

Thus for the sets of equiangular lines in the above examples,
$$|S_{\alpha\beta}\cap S_{\eta\nu}|$$ is constant for all $4$-sets of a maximal incoherent set $\Gamma$ of size $d$.
Interestingly, the following result holds in the general case.

\begin{Lem}\label{lem:4setsum} 
Let $\Omega$ be a set of equiangular lines in $\R^d$
such that $(\Omega,\C)$ is a regular two-graph with parameters $(n,a,b)$ and $\Inc(\Omega)=d$, and let $\Gamma\subseteq\Omega$ is a subset of $d$ incoherent lines.
Then for $\alpha,\beta\in\Gamma$, 
\begin{equation}\label{eq:4setsum}\sum_{\eta,\nu\in\Gamma\backslash\{\alpha,\beta\}}|S_{\alpha\beta}\cap S_{\eta\nu}|=a(d-k-1)(k-1).\end{equation}
where $k=|\Gamma_1(\gamma)|$ for any $\gamma\in\Omega\backslash\Gamma$. 
\end{Lem}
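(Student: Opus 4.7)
The plan is a straightforward double count obtained by swapping the order of summation, and then exploiting the partition structure of $\Gamma$ relative to each $\gamma \in \Omega\setminus\Gamma$ supplied by Taylor's setup in \eqref{eq:defsets}.

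First I would rewrite the left-hand side by counting triples $(\{\eta,\nu\},\gamma)$ with $\gamma \in S_{\alpha\beta}\cap S_{\eta\nu}$ in the opposite order, obtaining
\begin{equation*}
\sum_{\eta,\nu\in\Gamma\setminus\{\alpha,\beta\}}|S_{\alpha\beta}\cap S_{\eta\nu}| \;=\; \sum_{\gamma\in S_{\alpha\beta}} N(\gamma),
\end{equation*}
where $N(\gamma)$ denotes the number of $2$-subsets $\{\eta,\nu\}\subseteq\Gamma\setminus\{\alpha,\beta\}$ with $\gamma\in S_{\eta\nu}$.

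Next I would compute $N(\gamma)$ for a fixed $\gamma\in S_{\alpha\beta}$. By Lemma \ref{lem:simple}, for any $\delta\in\Omega\setminus\Gamma$ and any pair $\{\eta,\nu\}\subseteq\Gamma$, we have $\{\delta,\eta,\nu\}\in\C$ if and only if $\eta$ and $\nu$ lie on opposite sides of the partition $\Gamma = \Gamma_1(\delta)\sqcup\Gamma_2(\delta)$. Applying this to $\delta=\gamma$ and to the pair $\{\alpha,\beta\}$, the assumption $\gamma\in S_{\alpha\beta}$ forces $\alpha$ and $\beta$ to be on opposite sides of the partition $\Gamma=\Gamma_1(\gamma)\sqcup\Gamma_2(\gamma)$. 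By Proposition \ref{prop:tayeq} the two part sizes are the fixed integer roots $k$ and $d-k$ of the quadratic \eqref{eq:taypoly}, independent of $\gamma$. After discarding $\alpha$ and $\beta$, the two parts retain $k-1$ and $d-k-1$ elements respectively, so the number of transverse $2$-subsets in $\Gamma\setminus\{\alpha,\beta\}$ is exactly $(k-1)(d-k-1)$, i.e.\ $N(\gamma)=(k-1)(d-k-1)$.

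Finally, because $(\Omega,\C)$ is a regular two-graph with parameters $(n,a,b)$, every pair of lines lies in exactly $a$ coherent triples, so $|S_{\alpha\beta}|=a$. Summing the constant value $N(\gamma)$ over $\gamma\in S_{\alpha\beta}$ gives $a(k-1)(d-k-1)$, which is the asserted identity. There is no real obstacle: the argument is purely combinatorial, and the only point deserving a brief remark is that $k=|\Gamma_1(\gamma)|$ is genuinely independent of the choice of $\gamma\in\Omega\setminus\Gamma$, which is immediate from Proposition \ref{prop:tayeq}.
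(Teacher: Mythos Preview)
Your argument is correct. The one small point worth making explicit is that every $\gamma\in S_{\alpha\beta}$ automatically lies in $\Omega\setminus\Gamma$ (since $\Gamma$ is incoherent, no $3$-subset of $\Gamma$ belongs to $\C$), so the partition $\Gamma=\Gamma_1(\gamma)\sqcup\Gamma_2(\gamma)$ is indeed available for each term of your outer sum.

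Your route differs from the paper's. The paper introduces the $(n-d)\times\binom{d}{2}$ incidence matrix $M$ with $M_{\gamma,\{\alpha\beta\}}=1$ iff $\gamma\in S_{\alpha\beta}$, observes that $M^TM$ has constant row sums $k(d-k)a$, and then obtains \eqref{eq:4setsum} by subtracting from this row sum the diagonal term $|S_{\alpha\beta}|=a$ and the $2(d-2)$ ``share-one-index'' terms $|S_{\alpha\beta}\cap S_{\alpha\eta}|=|S_{\alpha\beta}\cap S_{\beta\eta}|=a/2$ supplied by Theorem~\ref{thm:desint1}. Your direct double count avoids invoking Theorem~\ref{thm:desint1} altogether and is more elementary for this lemma in isolation. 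On the other hand, the paper's matrix $M$ is not throwaway scaffolding: it is reused immediately afterwards in the proof of Theorem~\ref{thm:main4}, where the spectrum of $M^TM$ is analysed, so setting it up here serves a dual purpose.
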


\begin{proof}
Let $M$ be the $(n-d)\times \binom{d}{2}$ matrix whose columns are the characteristic vectors of the sets $S_{\alpha\beta}$, where $\alpha,\beta\in\Gamma$. That is, the rows of $M$ are 
labelled by $\Omega\backslash\Gamma$, the columns labelled by pairs of elements of $\Gamma$, and
$$M_{\gamma,\{\alpha\beta\}}=\begin{cases}{1}&\textnormal{ if $\gamma\in S_{\alpha\beta}$ }\\
0&\textnormal{ if $\gamma\notin S_{\alpha\beta}$ }.
\end{cases}$$
As $(\Omega,\C)$ is a regular two-graph, the columns of $M$ have $a$ non-zero entries, and moreover, the rows of $M$ have $k(d-k)$ non-zero entries.
One deduces the $\bf{1}$ is an eigenvector of $M^TM$ with eigenvalue $k(d-k)a$.

Now observe that the ($\{\alpha\beta\},\{\eta\nu\}$) entry of $M^TM$ is equal to 
$$|S_{\alpha\beta}\cap S_{\eta\nu}|.$$ Since $\bf{1}$ is an eigenvector with eigenvalue $k(d-k)a$, $|S_{\alpha\beta}|=a$, and by Lemma \ref{thm:desint1}, 
$$|S_{\alpha\beta}\cap S_{\alpha\eta}|=|S_{\alpha\beta}\cap S_{\beta\eta}|=a/2$$ for all $\eta\in\Gamma\backslash\{\alpha,\beta\}$, we deduce that \eqref{eq:4setsum} holds.
\end{proof}

We can now prove Theorem \ref{thm:main4}.

\begin{proof}[Proof of Theorem \ref{thm:main4}] 
Suppose that $d\geq 4$ and $$|S_{\alpha\beta}\cap S_{\eta\nu}|=c$$ for all $4$-sets $\{\alpha,\beta,\eta,\nu\}$ of $\Gamma$, where $c$ is some constant. 
If $(\Omega,\C)$ has parameters $(n,a,b)$ and $k=|\Gamma_1(\gamma)|$ for some $\gamma\in\Omega\backslash\Gamma$, then by Lemma \ref{lem:4setsum}, 
$$c=\frac{2a(d-k-1)(k-1)}{(d-2)(d-3)}.$$
Hence we deduce from Theorem \ref{thm:desint1} that 
$$M^TM=aI+\frac{a}{2}A+c(J-I-A)$$
where $M$ is the matrix defined in the proof of Lemma \ref{lem:4setsum} and $A$ is the adjacency matrix of the Triangular graph $T(d)$ (see \cite[Chapter 9]{spectra} 
for the definition of the Triangular graph). It follows that the eigenvalues of $M^TM$ are
\begin{align*}
\theta_0&=a(d-1)+\frac{c(d-2)(d-3)}{2}=k(d-k)a\\
\theta_1&=\frac{a(d-2)}{2}-c(d-3)\\
\theta_2&=c
\end{align*}
with multiplicities $$m_{\theta_0}=1,\,\,m_{\theta_1}=d-1,\,\,m_{\theta_2}=\frac{d(d-3)}{2}.$$

If $n-d=d(d-1)/2$, then by Theorem \ref{thm:main}, $d=7$ or $23$ and $\Omega$ is isometric to the corresponding example given in Section \ref{sec:examples}. 

Suppose now that $n-d<d(d-1)/2$. 
As $MM^T$ has the same non-zero eigenvalue spectrum as $M^TM$, we deduce that one of the eigenvalues of $M^TM$ must be equal to zero. 
It is straightforward to show that $c\neq 0$, and so we must have that $\theta_1=0$. We thus deduce that
$$4(d-k-1)(k-1)=(d-2)^2$$
Suppose $2k<d$, and substitute the values for $d$ and $k$ given in Remark \ref{rem:intermsints} in the expression above. We conclude that $m^2=s_2$, from which we deduce
that $2k=d$, a contradiction.

Thus $2k=d$, and by Remark \ref{rem:paramsdes}, 
$$d=2i(2i+1),\,\, n=|\Omega|=8i^2(i+1)$$
for some positive integer $i$. Moreover, we have that $MM^T$ has eigenvalues $\theta_0$ and $\theta_2$ with respective multiplicities $1$ and $d(d-3)/2$. 
However, the multiplicity of $\theta_2$ must also equal $n-d-1$, so
$$d(d-3)/2=n-d-1.$$
Substituting in the values for $d$ and $n$, we deduce that this holds if and only if $i=1$. Therefore $d=6$ and $\Omega$ is isometric to the corresponding
set of lines given in Section \ref{sec:examples}
\end{proof}

\subsection{Two Conjectures}
By Theorem \ref{thm:absbd}, we know that if a set equiangular lines in $\R^d$ ($d>3$) saturates the absolute bound, $d+2$ is the square of an odd integer. 
Bannai et al.~\cite{bannaivenkov}, and subsequently Nebe and Venkov \cite{nebevenkov}, proved 
that no such set of lines exists for infinitely many feasible values of $d$. This, along with Theorem \ref{thm:main}, 
is suggestive that the only dimensions in which the absolute bound is saturated are the known ones.
Thus we make the following conjecture, which if correct will, along with Theorem \ref{thm:main}, classify the sets of equiangular lines that saturate the absolute upper bound.

\begin{Conj}\label{conj:abs} Let $\Omega$ be a set of $d(d+1)/2$ equiangular lines in $\R^d$. Then $\Inc(\Omega)=d$.
\end{Conj}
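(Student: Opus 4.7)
The plan is to extend the analysis of Section \ref{sec:maxinc} from maximal incoherent subsets of size exactly $d$ to maximal incoherent subsets of arbitrary size $g \leq d$, and then to force $g = d$. Let $\Omega$ saturate the absolute bound, so $|\Omega| = d(d+1)/2$ and $\kappa = 1/\sqrt{d+2}$. Since $\rho^2 = d+2 > d$, Theorem \ref{thm:relbd} implies $(\Omega, \C)$ is a regular two-graph. Moreover, by the tight frame property of a unit-vector representative, the Seidel matrix $S$ of $(\Omega, \C)$ has exactly two distinct eigenvalues, $-\rho$ with multiplicity $d(d-1)/2$ and $(d-1)\rho/2$ with multiplicity $d$, so
$$S^2 = \tfrac{(d-3)\rho}{2}\,S + \tfrac{(d+2)(d-1)}{2}\,I.$$
The task is to exhibit a $d$-subset $\Gamma \subseteq \Omega$ together with representative unit vectors whose pairwise inner products are all strictly positive --- equivalently, a principal $d \times d$ submatrix of $S$ that becomes (off-diagonal) constant after a suitable switching.

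Suppose for contradiction that every maximal incoherent subset of $\Omega$ has size $g < d$, and fix one such $\Gamma$. I would first generalise Proposition \ref{prop:tayeq} and Lemma \ref{lem:setsum} to this setting. For $\gamma \in \Omega \setminus \Gamma$, the partition $\Gamma = \Gamma_1(\gamma) \sqcup \Gamma_2(\gamma)$ from Section \ref{sec:taylor} still makes sense, and expanding the unit vector representing $\gamma$ against the linearly independent set $\Gamma$ (in the spirit of Remark \ref{rem:coeff}) should give a quadratic for $(|\Gamma_1(\gamma)|, |\Gamma_2(\gamma)|)$ whose coefficients are rational functions of $g, d, \rho$. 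Combined with Taylor's identity
$$\sum_{\gamma \in \Omega \setminus \Gamma} |\Gamma_1(\gamma)||\Gamma_2(\gamma)| = \tfrac{a g (g-1)}{2},$$
where $a = (d-1)(d+2)/4$, and with suitable analogues of \eqref{eq:setsum1}--\eqref{eq:setsum6}, this should produce arithmetic constraints on $g$ which, by integrality and positivity, force $g = d$, in the spirit of the divisibility obstruction that rules out $\rho = 29$ in the proof of Theorem \ref{thm:main}.

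As a complementary strategy, one could try to show that the hypothesis of Theorem \ref{thm:main4} is automatic in the saturating case: because $\Omega \cap \S^{d-1}$ is a tight spherical $5$-design, all averages of polynomials of degree at most $5$ in the pairwise inner products are determined by $d$ alone. A careful application of the addition formula for spherical harmonics should then show that, for any maximal incoherent subset $\Gamma$, the quantity $|S_{\alpha\beta} \cap S_{\eta\nu}|$ is constant on $4$-subsets of $\Gamma$. Given this, Theorem \ref{thm:main4} together with Theorem \ref{thm:main} would immediately reduce the problem to $d \in \{2, 3, 6, 7, 23\}$, and $d = 6$ is ruled out since no set of $d(d+1)/2 = 21$ equiangular lines exists in $\R^6$.

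The main obstacle is that this conjecture, combined with Theorem \ref{thm:main}, is equivalent to the longstanding open question of whether $d(d+1)/2$ equiangular lines exist only for $d = 2, 3, 7, 23$. The arithmetic constraints produced by the extended Taylor identities have no a priori incompatibility with $g < d$; the real difficulty is to inject genuinely new information from the two-eigenvalue Seidel spectrum or from the spherical-design structure that distinguishes the saturating case from the general relative-bound case. The non-existence results of Bannai--Nebe--Venkov rule out many dimensions in the admissible series $d+2 \in \{(2k+1)^2 : k \geq 1\}$, but a full proof of the conjecture will very likely demand an extension of those techniques that handles the entire infinite family in a uniform way.
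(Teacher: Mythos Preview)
The statement you are attempting is Conjecture~\ref{conj:abs}, which the paper explicitly leaves open; there is no proof in the paper to compare against. You yourself recognise this in your final paragraph: the conjecture, combined with Theorem~\ref{thm:main}, is equivalent to the classification of all dimensions in which the absolute bound is attained, a problem that remains open despite the partial non-existence results of Bannai--Munemasa--Venkov and Nebe--Venkov. So what you have written is not a proof but a sketch of possible research directions, and should be read as such.

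On the first strategy: the key obstruction is that Proposition~\ref{prop:tayeq} and Remark~\ref{rem:coeff} genuinely require $|\Gamma|=d$, because only then do the unit vectors of $\Gamma$ form a basis of $\R^d$, allowing every $\gamma\in\Omega\setminus\Gamma$ to be expanded uniquely in $\Gamma$. For $g<d$ no such expansion exists, so there is no analogue of the quadratic~\eqref{eq:taypoly} controlling $|\Gamma_1(\gamma)|$. Taylor's identity~\eqref{eq:tayloreq} does survive for arbitrary $g$, but on its own it gives no new integrality constraint on $g$. (Incidentally, your formula $a=(d-1)(d+2)/4$ is incorrect: for $d=7$ it gives $13.5$ rather than $10$. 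The correct value is $a=(d^2+d-4-\rho(d-3))/4$ with $\rho=\sqrt{d+2}$.)

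On the second strategy: Theorem~\ref{thm:main4} takes $\Inc(\Omega)=d$ as a hypothesis, so invoking it presupposes exactly what you are trying to prove. If you first assume $\Inc(\Omega)=d$, establish the $4$-set constancy via the spherical-design structure, and then apply Theorem~\ref{thm:main4}, you recover only the conclusion of Theorem~\ref{thm:main}; you have not advanced toward showing that a size-$d$ incoherent set must exist. If instead you mean to apply the spherical-harmonic argument to a maximal incoherent $\Gamma$ of size $g<d$, Theorem~\ref{thm:main4} is simply not available, and the matrix $M^{T}M$ of Lemma~\ref{lem:4setsum} no longer has the Triangular-graph structure exploited in the proof. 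Either way the argument is circular or inapplicable.
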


Recall from Theorem \ref{thm:desint1} that for any set of equiangular lines such that $(\Omega,\C)$ is a regular two-graph,
$|S_{\alpha\beta}\cap S_{\alpha\gamma}|$ is constant for all incoherent $3$-sets. Moreover,
Lemma \ref{lem:4setsum} shows that the sum of \begin{equation}\label{eq:constssets}|S_{\alpha\beta}\cap S_{\eta\nu}|\end{equation} over all pairs disjoint from $\alpha,\beta$ in a maximal
incoherent $d$-set is constant. Therefore it seems plausible that \eqref{eq:constssets} is constant for all $4$-sets of a maximal incoherent subset of $\Omega$.
Thus given Theorem \ref{thm:main4}, we conjecture the following.

\begin{Conj}\label{conj:relinc} Let $\Omega$ be a set of equiangular lines that saturates the relative bound and incoherence bound. Then $d=2,3,6,7,23$ and
$\Omega$ is isometric to the corresponding example found in Section \ref{sec:examples}.
\end{Conj}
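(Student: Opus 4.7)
The plan is to reduce Conjecture \ref{conj:relinc} to a classification question about the quasi-symmetric designs produced by Theorem \ref{thm:maineqquasi}. The cases $d=2$ and $d=3$ are supplied directly by items (i) and (ii) of that theorem, and the $i=1$ instance of item (iii) produces the quasi-symmetric $2$-$(5,2,1;1,0)$ design (all pairs on five points), which via Theorem \ref{thm:desdeq2k} reconstructs the $16$ equiangular lines in $\R^6$. What remains is to rule out item (iii) for $i\geq 2$ and to show that the only designs realising item (iv) correspond to $d=7$ (the $2$-$(7,2,1;1,0)$ design) and $d=23$ (the $2$-$(23,7,21;3,1)$ design derived from $S(4,7,23)$).

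For case (iv), I would first reparametrise everything in terms of the intersection numbers $s_1,s_2$ using Remark \ref{rem:intermsints}, so that $d,k,\rho,\lambda$ all become explicit rational functions of $(s_1,s_2)$. The basic inequality $\rho^2>d$ from the proof of Theorem \ref{thm:eqquasi} (via Calderbank's inequality) already bounds $s_1,s_2$ in relation to one another; I would strengthen this by invoking Theorem \ref{thm:quasisrg} to force the eigenvalues $\theta_1,\theta_2$ of the block graph to be rational integers (or to fall into a conference graph configuration), and by demanding that $\lambda(s_1,s_2)$ be a positive integer. Combining these arithmetic conditions with Fisher's inequality and the Krein bounds on the strongly regular block graph should whittle the feasible $(s_1,s_2)$ down to a short list, which can then be checked against known non-existence theorems for quasi-symmetric designs (in particular those of Sane--Shrikhande and Calderbank).

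For case (iii), the one-parameter family is cleaner. Using Remark \ref{rem:paramsdes}, the block graph of the quasi-symmetric derived design has eigenvalues depending rationally on $i$, and I would show that the multiplicity formulae in Theorem \ref{thm:quasisrg} fail to yield non-negative integers once $i\geq 2$, or alternatively apply the integrality of the dual eigenvalues of the associated strongly regular graph. Either route should produce a purely number-theoretic contradiction analogous to the elliptic-curve argument used in the proof of Theorem \ref{thm:main}.

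The main obstacle is that ruling out quasi-symmetric designs is a notoriously difficult problem; many parameter sets survive all standard feasibility tests without being decided. A more realistic intermediate target, which would also imply the conjecture, is to show unconditionally that $|S_{\alpha\beta}\cap S_{\eta\nu}|$ is constant on $4$-subsets of any maximal incoherent $d$-set once $\Omega$ saturates both the relative and the incoherence bounds; Theorem \ref{thm:main4} then finishes the proof. Establishing this constancy directly, perhaps by exploiting the identity of Lemma \ref{lem:4setsum} together with a second moment computation that bounds the variance of $|S_{\alpha\beta}\cap S_{\eta\nu}|$ in terms of the design parameters from Theorem \ref{thm:eqquasi}, appears to be the pivotal step, and I expect it to be where most of the difficulty lies.
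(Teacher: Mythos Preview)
The statement you are addressing is a \emph{conjecture} in the paper, not a theorem; the paper offers no proof and explicitly leaves it open. So there is no ``paper's own proof'' to compare against. Your proposal is therefore not being measured against an existing argument but against the evidence the paper itself assembles about why the conjecture is hard.

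On that score, your plan has a genuine gap. You suggest that for case (iii) the eigenvalue multiplicities of the block graph ``fail to yield non-negative integers once $i\geq 2$'', and that for case (iv) the standard feasibility tests (integrality of $\lambda$, Krein bounds, block-graph eigenvalues, Calderbank's inequalities) will ``whittle the feasible $(s_1,s_2)$ down to a short list''. The paper's own Section on existence of the relevant quasi-symmetric designs directly contradicts this optimism: Tables \ref{tab:mset} and \ref{tab:iset} exhibit infinitely many parameter sets surviving \emph{all} of these tests, and the text explains that for the families in Table \ref{tab:params} one has $r-\lambda=m^2 f(i)$, so the Calderbank and Blokhuis--Calderbank congruence conditions are automatically satisfied. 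For case (iii) specifically, the paper notes that only $i=2$ has been eliminated (by a special argument of Calderbank using the binary Golay code), and for Family~3 not a single member has been ruled out. So the approach of exhausting feasibility conditions is known not to close the problem.

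Your fallback route---proving directly that $|S_{\alpha\beta}\cap S_{\eta\nu}|$ is constant on $4$-subsets of a maximal incoherent $d$-set and then invoking Theorem \ref{thm:main4}---is exactly the heuristic the paper uses to motivate the conjecture in the first place (see the paragraph preceding Conjecture \ref{conj:relinc}). You correctly identify this as ``the pivotal step'' and ``where most of the difficulty lies''; the paper agrees, and offers no method for establishing it beyond the first-moment identity of Lemma \ref{lem:4setsum}. A second-moment argument of the kind you sketch would need more than the design parameters of Theorem \ref{thm:eqquasi}, since those parameters are shared by the undecided cases in the tables.
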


\section{The $276$ Equiangular lines in $\R^{23}$ and the roots of $E_8$}\label{sec:rootsE8}

We now identify the roots of the $E_8$ lattice with a subset of $\Omega$, the $276$ equiangular lines in $\R^{23}$. The trivial observation that
$$276=240+36$$
is suggestive. In particular, in Proposition \ref{prop:36lines} we identify a subset of $36$ of the lines in $\Omega$ and show that they form a maximal set of equiangular lines in a $15$-dimensional subspace of $\R^{23}$. Then in Theorem \ref{thm:rootsE8}, we show that the projection the lines in $\Omega$ (or in particular, a set of unit vectors representing $\Omega$) 
onto the orthogonal complement of this $15$-dimensional subspace gives the roots of $E_8$. 

Before we turn our attention to proving these results, we must first understand in more detail the $S(4,7,23)$ Steiner system used in Section \ref{sec:examples} (also given 
in Theorem \ref{thm:eqquasi})
to describe the $276$ equiangular lines in $\R^{23}$. The blocks of the $S(4,7,23)$ Steiner system are often referred to as a \emph{heptads}. 
The automorphism group $G$ of this Steiner system is isomorphic to the sporadic simple group $M_{23}$ \cite[Theorem 6.7B]{dixmort}, and in the following lemma
we collect certain facts about the stabiliser of a heptad in $G$. These facts can be determined from the cited results and the observation that the $S(4,7,23)$ Steiner system
can be seen as the derived design of the $S(5,8,24)$ Steiner system.

\begin{Lem}\label{lem:facts}(see \cite[Theorems 10 and 14, Chapter 10]{conwaysloane} and \cite[Theorem 2.10.1]{ivanov}) Let $H$ be the stabiliser in $G$ of a heptad $B$. 
Then $H\cong 2^4:A_7$ and the kernel of the action of $H$ on $B$ is isomorphic to the elementary abelian group $2^4$, 
which acts regularly on the complement of the heptad. Moreover, $H$ has a faithful $3$-transitive action on the complement of $B$.
\end{Lem}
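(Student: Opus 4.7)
The plan is to reduce the structural claims to well-known facts about the octad stabiliser in $M_{24}$, exploiting the hint that $S(4,7,23)$ arises as the derived design of $S(5,8,24)$ at a point $p$. Fixing such a $p$, the automorphism group $G\cong M_{23}$ of the Steiner system on $23$ points is the stabiliser $(M_{24})_p$, and a heptad $B$ extends uniquely to an octad $\tilde{B}=B\cup\{p\}$ of $S(5,8,24)$. Hence $H=G_B$ is precisely the subgroup of the octad-stabiliser of $\tilde{B}$ in $M_{24}$ that additionally fixes $p\in\tilde{B}$, and the $16$-point complement of $B$ in the $23$-point set coincides with the $16$-point complement of $\tilde{B}$ in the $24$-point set.

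Next, I would invoke the classical description of the octad stabiliser $(M_{24})_{\tilde{B}}\cong 2^4:A_8$, as recorded in Conway--Sloane. Under this decomposition the elementary abelian kernel $2^4$ acts trivially on $\tilde{B}$ and regularly as translations on the $16$-point complement (identified with the affine space $\mF_2^4$), while the complement $A_8$ acts faithfully as the alternating group on $\tilde{B}$ and as $\GL(4,2)$ on $\mF_2^4$, fixing the origin. Taking the stabiliser of $p\in\tilde{B}$ immediately yields $H\cong 2^4:A_7$. The kernel of the action of $H$ on $B$ coincides with the kernel of the action of $(M_{24})_{\tilde{B}}$ on $\tilde{B}$ (since $p$ is fixed throughout), which is $2^4$; and $2^4$ inherits its regular action on the complement of $B$ from its regular action on the complement of $\tilde{B}$.

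Finally, for the faithful $3$-transitive action of $H$ on the $16$ points of the complement, I would appeal to the classical fact that the stabiliser of a letter in $A_8\cong\GL(4,2)$ --- namely $A_7$ --- acts $2$-transitively on the $15$ non-zero vectors of $\mF_2^4$. This is one of the exceptional $2$-transitive permutation representations of $A_7$ and can be quoted from standard references, or read off from Ivanov's Theorem 2.10.1. Composing this $2$-transitive action on $\mF_2^4\setminus\{0\}$ with the regular translation action of the normal $2^4$ then promotes it to a $3$-transitive action of $H=2^4:A_7$ on all $16$ vectors, and faithfulness is immediate because $2^4\cap A_7=1$ inside the affine group. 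The only step beyond direct citation is identifying the $A_7\leq H$ as a $2$-transitive subgroup of $\GL(4,2)$; this is standard in the $M_{24}$ literature but is the one point I would verify carefully, and it constitutes the main (mild) obstacle in the otherwise bookkeeping argument.
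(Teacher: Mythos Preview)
Your proposal is correct and follows precisely the route the paper indicates: the paper does not give a self-contained proof but merely cites Conway--Sloane and Ivanov together with the observation that $S(4,7,23)$ is the derived design of $S(5,8,24)$, and your argument is exactly the natural unpacking of those citations via the octad stabiliser $2^4{:}A_8$ in $M_{24}$. The one point you flag as needing care --- that the copy of $A_7$ inside $\GL(4,2)$ acts $2$-transitively on the $15$ non-zero vectors --- is indeed the only non-bookkeeping step, and it is the content of the Ivanov reference the paper cites.
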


Given an involution $x$ in $G\cong M_{23}$ we say, with respect to $x$, a heptad $B$ is of
\begin{itemize}
\item[i)] Type 0 if $\fix(x)=B$;
\item[ii)] Type $1$ if $|B\cap\supp(x)|=4$ and $B^x=B$;
\item[iii)] Type $2$ if $|B\cap\supp(x)|=4$ and $B^x\neq B$; or
\item[iv)] Type $3$ if $|B\cap\supp(x)|=6$.
\end{itemize}
Here $\fix(x)$ denotes the fixed points of $x$ and $\supp(x)$ the support, or moved points, of $x$. 
 
\begin{Prop}\label{prop:invtypes} Let $x$ be an involution in $G$. Then, with respect to $x$, there is exactly $1$ heptad of Type $0$; $28$ heptads of Type $1$;
$112$ heptads of Type $2$; and $112$ heptads of Type $3$. Moreover, if $B$ is a heptad of Type $2$ or $3$ with respect to $x$, then $|B\cap B^x|=3$. 
\end{Prop}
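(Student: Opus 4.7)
The plan is to combine the block-intersection structure of $S(4,7,23)$ (with intersection numbers $\{1, 3\}$) with Lemma \ref{lem:facts} and two double-counting arguments. First, an involution $x \in G \cong M_{23}$ has cycle type $1^7 2^8$, so $F := \fix(x)$ has exactly $7$ points; that $F$ is a heptad is a standard consequence of viewing $M_{23}$ as the stabiliser of a point in $M_{24}$, whose $1^8 2^8$ involutions fix an octad of $S(5,8,24)$. This gives the unique Type $0$ heptad, and the quasi-symmetric property forces $|B \cap F| \in \{1, 3, 7\}$ for every heptad $B$ (with the value $7$ corresponding to $B = F$). Using the derived parameter $\lambda_3 = 5$, a short count shows there are $N_3 = \binom{7}{3}(\lambda_3 - 1) = 140$ heptads with $|B \cap F| = 3$ and hence $N_1 = 253 - 1 - 140 = 112$ with $|B \cap F| = 1$.

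To separate Type $1$ from Type $2$ I would double-count pairs $(B, x)$ with $B$ of Type $1$ relative to the involution $x$. By Lemma \ref{lem:facts} the stabiliser of a heptad is $H_B \cong 2^4 : A_7$, whose kernel $2^4$ fixes $B$ pointwise and acts regularly on $\P \setminus B$; hence each heptad is the fixed set of exactly $15$ involutions of $G$, so $G$ contains $15 \cdot 253 = 3795$ involutions. Involutions of $H_B$ in the kernel satisfy $|B \cap \fix(x)| = 7$, whereas any involution of $H_B$ outside the kernel descends to an involution of $A_7$ of cycle type $2^2 1^3$ on $B$ and fixes exactly $3$ points of $B$. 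Counting involutions of $H_B$ outside the kernel via the embedding $A_7 \hookrightarrow A_8 \cong \GL_4(\mF_2)$ — where each of the $105$ involutions of $A_7$ corresponds to a rank-$2$ element with $|C_{2^4}(h)| = 4$ — produces $4 \cdot 105 = 420$ such involutions. Equating the two double counts gives $3795 \cdot (\text{Type 1 per } x) = 253 \cdot 420$, hence $28$ Type $1$ heptads per involution and $140 - 28 = 112$ of Type $2$.

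Writing $|B \cap B^x| = j + 2t$, where $j = |B \cap F|$ and $t$ is the number of complete $2$-cycles of $x$ contained in $B$, the constraint $|B \cap B^x| \in \{1, 3\}$ for distinct heptads together with $j = 3$ and $B^x \neq B$ forces $t = 0$ and $|B \cap B^x| = 3$ for every Type $2$ heptad. The same stabiliser analysis shows no Type $3$ heptad can be fixed by $x$ (since no involution of $H_B$ yields $|B \cap \fix(x)| = 1$), so $1 + 2t \in \{1, 3\}$ restricts $t \in \{0, 1\}$ for all $112$ Type $3$ heptads. To rule out $t = 0$ I would double-count pairs $(B, T)$ with $B$ a Type $3$ heptad and $T$ a transposition of $x$ contained in $B$. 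For a fixed transposition $T$ (there are $8$), heptads through $T$ number $\lambda_2 = 21$ and satisfy $|B \cap F| \in \{1, 3\}$; counting incidences $(B, q)$ with $q \in F$ and $B \supseteq T \cup \{q\}$ in two ways, as $a + 3b$ (with $a + b = 21$) and as $|F| \cdot \lambda_3 = 35$, gives $a = 14$. The total number of pairs $(B, T)$ is therefore $8 \cdot 14 = 112$ on one side and $\sum_{B \text{ Type 3}} t(B)$ on the other; since there are exactly $112$ Type $3$ heptads each with $t \in \{0, 1\}$, this forces $t(B) = 1$ throughout, i.e.\ $|B \cap B^x| = 3$. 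The main obstacle I anticipate is the stabiliser-involution count in the second paragraph, which requires a precise understanding of the $A_7$-module structure on the normal $\mF_2^4$ to pin down $|C_{2^4}(h)| = 4$ for each involution $h \in A_7$.
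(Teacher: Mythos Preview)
Your argument is correct, and the approach is genuinely different from the paper's.

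The paper, after establishing the unique Type~0 heptad and the $140/112$ split exactly as you do, works \emph{locally} inside $\supp(x)$: writing $x$ as a product of eight transpositions, it uses the $3$-transitivity of $H$ on $\supp(x)$ (Lemma~\ref{lem:facts}) to show that for each of the $\binom{8}{2}=28$ pairs of transpositions $t_i,t_j$ there is a unique heptad $\hat B$ with $\hat B\cap\supp(x)=\supp(t_it_j)$, and that these are precisely the Type~1 heptads. For Type~3 it argues by cases on the number $T$ of transpositions meeting $\hat B$, eliminating $T=3,4$ via the Steiner property and $T=6$ via a somewhat delicate packing contradiction, leaving $T=5$ and hence $|\hat B\cap\hat B^x|=3$.

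Your route is a pair of \emph{global} double counts. For Type~1 you count involutions in each $H_B$ and across $G$; for Type~3 you count incidences between Type~3 heptads and transpositions of $x$ contained in them, using $\lambda_2$ and $\lambda_3$ to force $t(B)\equiv 1$. Your Type~3 argument is cleaner than the paper's casework and your Type~2 computation $|B\cap B^x|=j+2t$ is more transparent. The cost is the module-theoretic input $|C_{2^4}(h)|=4$, which you correctly flag; it is routine to verify (under $A_8\cong\GL_4(\mF_2)$ the class of $210$ involutions of shape $2^21^4$ in $A_8$ matches the class of $210$ rank-$2$ unipotents in $\GL_4(\mF_2)$, whose fixed space is $2$-dimensional), but it does go slightly beyond Lemma~\ref{lem:facts}. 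The paper's argument, by contrast, uses only the $3$-transitivity statement in that lemma and is in that sense more self-contained, at the price of the ad~hoc Type~3 analysis.
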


\begin{proof}
Let $H$ be the set-wise stabiliser in $G$ of a heptad $B$. 
By Lemma \ref{lem:facts}, $H\cong 2^4:A_7$, and the kernel $K$ of the action of $H$ on $B$ is isomorphic to
the elementary abelian group $2^4$.  Thus, for each non-trivial $x\in K$, $\fix(x)=B$ is a heptad. As there is only one conjugacy class of involutions in $M_{23}$ \cite{atlas}, 
it follows that for any involution $x\in G$, $\fix(x)$ is a heptad of Type 0 with respect to $x$.

Now let $x\in G$ be an involution and $B=\fix(x)$, the heptad of Type 0 with respect to $x$. Using simple counting arguments, one can deduce that any other heptad intersects $B$ in $1$ or $3$ points. 
In particular, there are exactly $112$ heptads that intersect $B$ in $1$ point, and exactly $140$ heptads that intersect $B$ in $3$ points. 

Let $\hat{B}$ be one of the heptads that intersect $B$ in $3$ points, so $x$ fixes point-wise $3$ of the elements of
$\hat{B}$ and moves the remaining four. As $x$ is has no fixed points in the complement of $B$, it is the product of $8$ transpositions. Let $S$ be the set of these $8$ transpositions. 
Then the $4$ points of $\hat{B}$ moved by $x$ are contained the support of either $2,3$ or $4$ transpositions in $S$. Suppose the $4$ points are contained in the support
of $3$ transpositions. Then this implies that exactly $2$ of the four points are interchanged, whereas the remaining two points are moved to points not contained in $\hat{B}$. However,
this would imply that $|\hat{B}\cap\hat{B}^x|=5$, contradicting the fact that $S(4,7,23)$ is a Steiner system. Therefore the $4$ points moved by $x$ are contained in the support
of either $2$ or $4$ transpositions in $S$. 

As $H$ acts $3$-transitively on the support of $x$, for each pair of transpositions $t_i, t_k\in S$ there exists a heptad $\hat{B}$ such
that $\hat{B}\cap\supp(x)=\supp(t_it_k)$. For such a heptad it is clear that $\hat{B}^x=\hat{B}$, and so it is of Type $1$.
For each other heptad $\hat{B}$ that intersect $B$ in $3$ points, the $4$-points moved by $x$ are contained in the support of $4$ transpositions in $S$, and so $\hat{B}^x\neq\hat{B}$. 
Thus there are exactly $\binom{8}{2}=28$ heptads of Type $1$ and $140-28=112$ of Type $2$. It is straightforward to deduce that $|\hat{B}\cap\hat{B}^x|=3$ for a heptad of Type $2$.

The remaining $112$ heptads intersect $B$ in exactly $1$ point, and so are of Type $3$. Let $\hat{B}$ be one of these heptads and
$T$ be the number of transpositions in $S$ whose support contains at least one of the six points in $\hat{B}\cap\supp(x)$. Then $T=3,4,5$ or $6$.
For each pair of transpositions $t_i,t_k\in S$, we saw that there exists a unique heptad $\tilde{B}$ such that $\tilde{B}\cap\supp(x)=\supp(t_it_k)$, 
which implies that $T\neq 3$ or $4$. Now suppose that $T=6$, and let $t_1,\ldots,t_6$ be the six transpositions in $S$
whose support contains an element of $\hat{B}$. As before, for $j=2,\ldots 6$ there exists a unique heptad $B(j)$ such that $B(j)\cap\supp(x)=\supp(t_1t_j)$. Furthermore,
one deduces that $|B(j)\cap\hat{B}|=3$, and that each $B(j)$ intersects $B$ in a common point, namely $\{p\}=B\cap\hat{B}$. In particular, $B(j)\cap B(k)=\{p\}\cup\supp(t_1)$ for all $j\neq k$, and moreover,
$|B(j)\cap B|=3$ for all $j$. However, this implies that $5$ sets of size $2$ must fit into a set of size $6$ without overlapping, which is not possible. Hence $T=5$
and one deduces that $|\hat{B}\cap \hat{B}^x|=3$.
\end{proof}

Let $\Omega$ be the set of $276$ equiangular lines in $\R^{23}$ and let $\Gamma\subseteq\Omega$ be a maximal incoherent subset of $23$ lines.
We now choose unit vectors to represent the lines in $\Omega$. Let $\mathfrak{B}=\{\alpha_i\}_{i=1}^{23}$ be a set of unit vectors that represent
the lines in $\Gamma$ such that $(\alpha_i,\alpha_j)=1/5$ for all $i\neq j$. For each $\gamma\in\Omega\backslash\Gamma$ we now 
identify $\Gamma_1(\gamma)$ with the set of unit vectors in $\mathfrak{B}$ corresponding to the lines in $\Gamma_1(\gamma)$, and similarly for $\Gamma_2(\gamma)$. 
By Remark \ref{rem:coeff}, for each $\gamma\in\Omega\backslash\Gamma$, the unit vector
\begin{equation}\label{eq:vecsdef}\frac{1}{3}\sum_{\alpha_i\in\Gamma_1(\gamma)}\alpha_i-\frac{1}{6}\sum_{\alpha_i\in\Gamma_2(\gamma)}\alpha_i\end{equation}
represents $\gamma$. In fact, we shall use $\gamma$ to denote the unit vector above.
Let $\mathcal{U}$ be the union of these vectors with the basis $\mathfrak{B}$, so $\U$ is a set of unit vectors that represents the lines in $\Omega$.

Clearly any permutation of $\Gamma$ induces a permutation of $\mathfrak{B}$, and so the automorphism group $G\cong M_{23}$ of $(\Gamma,\B_1)$ 
as given in Theorem \ref{thm:eqquasi} induces an action on $\U$. 

\begin{Prop}\label{prop:36lines} Let $G$ be as above and let $x\in G$ be an involution. Then $x$ fixes point-wise $36$ elements of $\U$. Let $V$ be the span
of these $36$ vectors. Then $\dim(V)=15$, and the two-graph of the corresponding $36$ equiangular lines is regular.
\end{Prop}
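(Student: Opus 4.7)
The plan is to decompose the proposition into three separate claims: (a) exactly $36$ elements of $\U$ are pointwise fixed by $x$; (b) their span $V$ has dimension $15$; (c) the corresponding $36$ equiangular lines in $V$ form a regular two-graph.

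For (a), since the action of $G$ on $\R^{23}$ is the linear extension of its permutation action on $\mathfrak{B}$, a basis vector $\alpha_i\in\mathfrak{B}$ is fixed by $x$ iff $i\in\fix(x)$, yielding $7$ fixed elements. For a block vector $\gamma\in\U\setminus\mathfrak{B}$, formula \eqref{eq:vecsdef} together with $|\Gamma_1(\gamma)|=7\neq 16=|\Gamma_2(\gamma)|$ shows that $x\gamma=\gamma$ iff $x$ preserves $\Gamma_1(\gamma)$ setwise. Since the sets $\Gamma_1(\gamma)$ range over the heptads of the $S(4,7,23)$ Steiner system, Proposition \ref{prop:invtypes} then gives exactly $1+28=29$ setwise-fixed heptads (Type $0$ plus Type $1$), producing $7+29=36$ fixed elements of $\U$.

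For (b), let $V_x$ denote the $+1$-eigenspace of $x$ acting on $\R^{23}$. Writing $\fix(x)=\{p_1,\ldots,p_7\}$ and the transpositions of $x$ as $(q_{2k-1},q_{2k})$ for $k=1,\ldots,8$, and setting $\beta_k:=\alpha_{q_{2k-1}}+\alpha_{q_{2k}}$, we have $V_x=\operatorname{span}(\alpha_{p_1},\ldots,\alpha_{p_7},\beta_1,\ldots,\beta_8)$, so $\dim V_x=15$. Hence $V\subseteq V_x$. The seven $\alpha_{p_i}$ already contribute $7$ independent directions to $V$. To get the remaining $8$, I would use the bijection from Proposition \ref{prop:invtypes} between Type 1 heptads and pairs $\{i,j\}\subseteq\{1,\ldots,8\}$ of transpositions: a direct substitution into \eqref{eq:vecsdef} shows that for the Type 0 vector $\gamma_0$ (with $\Gamma_1(\gamma_0)=\fix(x)$) and the Type 1 vector $\gamma_{ij}$,
\[
\gamma_{ij}-\gamma_0 \;\equiv\; \tfrac{1}{2}(\beta_i+\beta_j) \pmod{\operatorname{span}(\alpha_{p_1},\ldots,\alpha_{p_7})}.
\]
Since the pairwise sums $\beta_i+\beta_j$ span $\operatorname{span}(\beta_1,\ldots,\beta_8)$, this yields $\dim V=15$.

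For (c), the $36$ fixed vectors are unit vectors in $V$ with pairwise inner products $\pm 1/5$ inherited from $\R^{23}$, so they span $36$ equiangular lines in $V\cong\R^{15}$ of common angle $\kappa=1/5$. Since $\kappa^2 d=3/5<1$, the relative bound in Theorem \ref{thm:relbd} evaluates to $(15-3)/(1-3/5)=36$, so the $36$ lines saturate it and the associated two-graph is therefore regular. The only substantive step is (b); the main potential obstacle there is identifying which heptads contribute independent vectors, but the parametrisation of Type 1 heptads by pairs of transpositions in Proposition \ref{prop:invtypes} makes the short identity above immediate, and the individual intersections $B_{ij}\cap\fix(x)$ never need to be computed.
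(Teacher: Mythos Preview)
Your proof is correct and follows the same overall outline as the paper: count the $7$ fixed basis vectors and the $1+28$ fixed block vectors via Proposition~\ref{prop:invtypes}, use the permutation action of $x$ to identify the $15$-dimensional $(+1)$-eigenspace, and invoke Theorem~\ref{thm:relbd} for regularity.

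The one point of genuine difference is step~(b). The paper simply declares $V$ to be the $1$-eigenspace of $x$ (whose dimension is read off from the cycle structure) and then checks that the $36$ vectors lie in it; that the span of the $36$ vectors actually \emph{fills} this $15$-dimensional space is not argued directly --- it can be recovered a posteriori from the relative bound, since $36$ equiangular lines with $\kappa=1/5$ cannot fit into $\R^{14}$. You instead separate $V$ (the span) from the eigenspace $V_x$, and close the gap $V=V_x$ by the explicit identity $\gamma_{ij}-\gamma_0\equiv\tfrac12(\beta_i+\beta_j)$ modulo the fixed basis vectors, together with the observation that the pairwise sums $\beta_i+\beta_j$ span all of $\operatorname{span}(\beta_1,\dots,\beta_8)$. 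This is a cleaner and more self-contained justification of $\dim V=15$ than the paper's, at the cost of one short computation; the paper's route is shorter but leans on the relative bound doing double duty.
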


\begin{proof}
With respect to the basis $\mathfrak{B}$, $x$ is a permutation matrix, and its characteristic polynomial is equal to $(\lambda-1)^{15}(\lambda+1)^8$ as it the product of $8$ transpositions. 
Therefore $V$, the $1$-eigenspace of $x$, is a $15$-dimensional subspace and $W$, the $(-1)$-eigenspace of $x$, is an $8$-dimensional subspace.
We claim that exactly $36$ lines of $\U$ are contained in $V$. 

By Proposition \ref{prop:invtypes}, the fixed points of $x$ are a heptad of the $S(4,7,23)$ Steiner system. Let $\gamma\in\U\backslash\mathfrak{B}$ be the vector
such that $\Gamma_1(\gamma)$ are the fixed points of $x$. Clearly $\alpha_i\in V$ for $\alpha_i\in\Gamma_1(\gamma)$.
Furthermore, $\Gamma_i(\gamma)^x=\Gamma_i(\gamma)$ for $i=1,2$, and so
$$\gamma^x=1/3\sum_{\alpha_i\in\Gamma_1(\gamma)}\alpha^x_i-1/6\sum_{\alpha_i\in\Gamma_2(\gamma)}\alpha^x_i=1/3\sum_{\alpha_i\in\Gamma_1(\gamma)}\alpha_i-1/6\sum_{\alpha_i\in\Gamma_2(\gamma)}\alpha_i=\gamma.$$
Finally, in Proposition \ref{prop:invtypes} we saw that there exist exactly 
$28$ heptads of Type $1$ with respect to $x$, that is, $28$ heptads $\Gamma_1(\delta)$ such that $|\Gamma_1(\delta)\cap \Gamma_1(\gamma)|=3$ and 
$\Gamma_1(\delta)^x=\Gamma_1(\delta)$ (and so $\Gamma_2(\delta)^x=\Gamma_2(\delta)$). 
In particular, a similar calculation to above shows that $\delta^x=\delta$ for each of these heptads of Type $1$.
Thus we have
$$7+1+28=36$$
vectors in $V$. These $36$ vectors represent $36$ equiangular lines in $\Omega$ contained in $V$ with common angle $\kappa=1/5$, and it follows from
Theorem \ref{thm:relbd} that the corresponding two-graph is regular. Moreover, the maximum number of equiangular lines in a $15$-dimensional vector space
is $36$, so no other line of $\Omega$ is entirely contained in $V$. 
\end{proof}

Choose any set $\mathfrak{S}=\{\alpha_{i_j}\}_{j=1}^8$ of $8$ elements in the support of $x$ on $\mathfrak{B}$, i.e.~$\Gamma_2(\gamma)$, that are not pairwise interchanged by $x$. 
Then a basis for $W$, the $(-1)$-eigenspace of $x$, is $\mathfrak{B}_W=\{w_1,\ldots,w_8\}$ where
$$w_j=\alpha_{i_j}-\alpha^x_{i_j}\textnormal{ for $1\leq j\leq 8$ }.$$
Let $\E$ be the set of $240$ vectors in $\U$ that have a non-trivial component in $W$, that is, $\E$ is the set $\U$ minus the
$36$ vectors given in the proof of Proposition \ref{prop:36lines}.

\begin{Lem}\label{lem:sizedW} 
Let $x$ and $\E$ be as above. For
$\delta\in\E$ let $\delta_W$ be the projection of $\delta$ onto $W$.
Then $$(\delta_W,\delta_W)=\frac{2}{5}.$$
\end{Lem}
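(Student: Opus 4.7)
The starting point is that $V$ and $W$ are the $+1$ and $-1$ eigenspaces of $x$ respectively, so decomposing $\delta = \delta_V + \delta_W$ gives $\delta^x = \delta_V - \delta_W$. Since $\delta \in \U$ is a unit vector, $(\delta, \delta) = (\delta^x, \delta^x) = 1$, and expanding $(\delta - \delta^x, \delta - \delta^x) = 4(\delta_W, \delta_W)$ yields
\begin{equation*}
(\delta_W, \delta_W) = \frac{1 - (\delta, \delta^x)}{2}.
\end{equation*}
So the task reduces to showing $(\delta, \delta^x) = 1/5$ for every $\delta \in \E$.

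Next I observe that $\delta^x$ is itself a vector of $\U$. For $\alpha_i \in \mathfrak{B}$ this is clear since $x$ permutes $\mathfrak{B}$. For $\gamma \in \U \setminus \mathfrak{B}$, the defining formula \eqref{eq:vecsdef} depends only on the partition $\{\Gamma_1(\gamma), \Gamma_2(\gamma)\}$ of $\Gamma$, which is intrinsic to the line spanned by $\gamma$. Since $x$ acts linearly, applying $x$ to \eqref{eq:vecsdef} replaces $\Gamma_1(\gamma)$ and $\Gamma_2(\gamma)$ by their $x$-images, producing by construction the chosen representative in $\U$ of the image line. Hence $\delta^x \in \U$ in all cases.

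By hypothesis $\delta \in \E$, so $\delta$ is not among the $36$ vectors of $\U$ fixed by $x$ identified in Proposition \ref{prop:36lines}. If $\delta = \alpha_i \in \mathfrak{B}$ then $i$ lies in the support of $x$, so $\delta^x = \alpha_j$ for some $j \neq i$, giving $(\delta, \delta^x) = 1/5$ immediately. Otherwise $\delta = \gamma$ corresponds to a heptad $\Gamma_1(\gamma)$ which, by the proof of Proposition \ref{prop:36lines}, must be of Type $2$ or Type $3$ with respect to $x$, since Types $0$ and $1$ both give vectors in $V$. In either case Proposition \ref{prop:invtypes} supplies $|\Gamma_1(\gamma) \cap \Gamma_1(\gamma)^x| = 3$.

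Setting $A = \Gamma_1(\gamma)$, $B = \Gamma_2(\gamma)$, $A' = A^x$, $B' = B^x$, one then has $|A \cap A'| = 3$, $|A \cap B'| = |B \cap A'| = 4$, and $|B \cap B'| = 12$. Every double sum $\sum_{\alpha \in X,\, \beta \in Y} (\alpha, \beta)$ evaluates to $\tfrac{4|X \cap Y| + |X||Y|}{5}$ using the inner products of vectors in $\mathfrak{B}$; substituting these four values into the expansion of $(\gamma, \gamma^x)$ via \eqref{eq:vecsdef} yields $(\gamma, \gamma^x) = 1/5$. Combining with the opening identity gives $(\delta_W, \delta_W) = 2/5$. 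The main obstacle is nothing deep: it is this final bookkeeping step, which is routine but requires careful tracking of the four cross-intersection sizes between $\{A,B\}$ and $\{A',B'\}$.
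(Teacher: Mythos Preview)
Your proof is correct and rests on the same two ingredients as the paper's: the identity $\delta_W=\tfrac12(\delta-\delta^x)$ and the fact from Proposition~\ref{prop:invtypes} that heptads of Type~2 or~3 satisfy $|\Gamma_1(\delta)\cap\Gamma_1(\delta)^x|=3$. The difference is purely in the final bookkeeping. You reduce to computing the scalar $(\delta,\delta^x)$ via the Gram-sum formula $\sum_{\alpha\in X,\beta\in Y}(\alpha,\beta)=\tfrac{1}{5}(4|X\cap Y|+|X||Y|)$, whereas the paper instead expresses $\delta_W$ explicitly in the orthogonal basis $\mathfrak{B}_W=\{w_j\}$ of $W$, showing that (up to permutation and signs) it has the shape $(\pm\tfrac14,\pm\tfrac14,\pm\tfrac14,\pm\tfrac14,0,0,0,0)$, and then reads off the norm. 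Your route is marginally quicker for the lemma itself; the paper's route yields the coordinate description of $\delta_W$ as a by-product, which is what one wants anyway when identifying $\E_8$ with the $E_8$ root system in Theorem~\ref{thm:rootsE8}. (A small remark: your second paragraph, establishing $\delta^x\in\U$, is not actually needed for the computation---linearity of $x$ on the expression \eqref{eq:vecsdef} already gives $\gamma^x=\tfrac13\sum_{\alpha\in\Gamma_1(\gamma)^x}\alpha-\tfrac16\sum_{\alpha\in\Gamma_2(\gamma)^x}\alpha$, which is all your Gram-sum calculation uses.)
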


\begin{proof}
For each $\delta\in\E$,
$$\delta=\delta_W+\delta_V$$
for some $\delta_W\in W$ and $\delta_V\in V$. In fact,
$\delta_W=\frac{1}{2}(\delta-\delta^x)$ and $\delta_V=\frac{1}{2}(\delta+\delta^x)$. 

First suppose that $\delta=\alpha_i$ for some $\alpha_i\in\Gamma_2(\gamma)$. Then
$$\delta_W=\frac{1}{2}(\alpha_i-\alpha_i^x)=\pm\frac{1}{2}w_j$$
for some basis element $w_j\in\mathfrak{B}_W$. A straightforward calculation shows that $(\delta_W,\delta_W)=2/5$.

Now suppose that $\delta\in\E\backslash\Gamma_2(\gamma)$. Given that $\fix(x)=\Gamma_1(\gamma)$, we deduce from the expression \eqref{eq:vecsdef} for $\delta$ that 
\begin{equation}\label{eq:d-dx}\delta-\delta^x=\frac{1}{3}\sum_{\alpha_i\in\Gamma_2(\gamma)\cap\Gamma_1(\delta)}(\alpha_i-\alpha_i^x)-\frac{1}{6}\sum_{\alpha_i\in\Gamma_2(\gamma)\cap\Gamma_2(\delta)}(\alpha_i-\alpha_i^x).\end{equation}
If $\alpha_i,\alpha_i^x\in\Gamma_2(\gamma)\cap\Gamma_1(\delta)$ then $\alpha_i-\alpha_i^x$ and
$\alpha_i^x-\alpha_i$ both appear in the sum over $\Gamma_2(\gamma)\cap\Gamma_1(\delta)$ in \eqref{eq:d-dx} and hence cancel each other out. A similar statement 
holds if $\alpha_i,\alpha_i^x\in\Gamma_2(\gamma)\cap\Gamma_2(\delta)$. Therefore we only need to consider the elements 
$\alpha_i\in\Gamma_2(\gamma)\cap\Gamma_1(\delta)$ that are mapped by $x$ into $\Gamma_2(\delta)$. Equivalently,
if $\alpha_i$ is one of these elements then $\alpha_i^x\in\Gamma_2(\gamma)\cap\Gamma_2(\delta)$, and $\alpha_i^x$ is mapped by $x$ into $\Gamma_1(\delta)$. 
In particular, each such $\alpha_i$ contributes exactly $$\frac{1}{3}(\alpha_i-\alpha_i^x)-\frac{1}{6}(\alpha_i^x-\alpha_i)=\frac{1}{2}(\alpha_i-\alpha_i^x)=\pm\frac{1}{2}w_j$$
to $\delta-\delta^x$, for some basis element $w_j\in\mathfrak{B}_W$.

As $\gamma\in V$ and all vectors identified with the $28$ heptads of Type $1$ are wholly contained in $V$, $\Gamma_1(\delta)$ is of Type $2$ or $3$. It follows from Proposition \ref{prop:invtypes} that 
$|\Gamma_1(\delta)\cap\Gamma_1(\delta)^x|=3$, from which we deduce that exactly $4$ elements of $\Gamma_1(\delta)$ are mapped by $x$ into $\Gamma_2(\delta)$. Hence
$\delta_W=\frac{1}{2}(\delta-\delta^x)$ written in the basis $\mathfrak{B}_W$ is some permutation of the form vector
$$(\pm\frac{1}{4},\pm\frac{1}{4},\pm\frac{1}{4},\pm\frac{1}{4},0,0,0,0).$$
We conclude that $(\delta_W,\delta_W)=2/5$ as $(w_j,w_j)=8/5$ for $w_j\in\mathfrak{B}_W$.
\end{proof}

We are now in a position to prove that the projection of $\E$ onto $W$, the $(-1)$-eigenspace of $x$, gives a set of vectors that can be identified with the roots of the $E_8$ lattice.

\begin{Thm}\label{thm:rootsE8} Let $\E_8$ be the projection of $\E$ onto $W$. Then after scaling $\E_8$ appropriately, 
there is an orthogonal transformation of $W$ mapping $\E_8$ onto the minimal vectors of the $E_8$ lattice.
\end{Thm}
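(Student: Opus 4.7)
The plan is to compute the Gram matrix of the $240$ projected vectors and then identify the configuration with the $E_8$ root system via the classification of root lattices.

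First I would derive the key identity. Since $\delta_W=\tfrac12(\delta-\delta^x)$ and $x$ is an orthogonal involution of $\R^{23}$ (it permutes the basis $\mathfrak{B}$ and hence preserves the inner product), a direct expansion gives
$$(\delta_{1,W},\delta_{2,W}) \;=\; \tfrac12\bigl[(\delta_1,\delta_2)-(\delta_1,\delta_2^x)\bigr]$$
for every $\delta_1,\delta_2\in\mathcal E$. Since $\mathcal U$ consists of one chosen unit vector per line of $\Omega$, and $\Omega$ has common angle $1/5$, each of $(\delta_1,\delta_2)$ and $(\delta_1,\delta_2^x)$ equals $1$ when the two arguments coincide and lies in $\{\pm 1/5\}$ otherwise. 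A short case analysis then shows $(\delta_{1,W},\delta_{2,W})=2/5$ when $\delta_1=\delta_2$ (matching Lemma~\ref{lem:sizedW}), equals $-2/5$ when $\delta_2=\delta_1^x$, and lies in $\{0,\pm 1/5\}$ otherwise.

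This immediately yields distinctness. If $\delta_{1,W}=\delta_{2,W}$ for distinct $\delta_1,\delta_2\in\mathcal E$, then $(\delta_{1,W},\delta_{2,W})=2/5$, so by the enumeration $\delta_2=\delta_1^x$, whence $\delta_{2,W}=-\delta_{1,W}$ and $\delta_{1,W}=0$, contradicting Lemma~\ref{lem:sizedW}. The two-cycles $\{\delta,\delta^x\}$ of $x$ acting on $\mathcal E$ therefore supply $120$ antipodal pairs in $\mathcal E_8$, so $\mathcal E_8$ is closed under negation and has cardinality exactly $240$. Scaling by $\sqrt 5$, the set $\mathcal R:=\sqrt 5\,\mathcal E_8$ consists of $240$ distinct vectors in $W$ of common norm $\sqrt 2$, closed under negation, with pairwise inner products in $\{0,\pm 1,\pm 2\}$. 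Furthermore, the eight rescaled projections $\tfrac{\sqrt 5}{2}w_j$ coming from the elements of $\mathfrak S$ lie in $\mathcal R$ and span $W$, so $\mathcal R$ generates a full-rank lattice $L\subset W$. Since all pairwise inner products are integral and every generator has squared norm $2$, $L$ is an even integral lattice; and since it is generated by its norm-$\sqrt 2$ vectors, it is a root lattice.

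Finally, I would invoke the classification of root lattices (see Conway--Sloane). Every root lattice is an orthogonal direct sum of irreducible pieces $A_n$, $D_n$ ($n\geq 4$), $E_6$, $E_7$, $E_8$ with $n(n{+}1)$, $2n(n{-}1)$, $72$, $126$, $240$ roots respectively. Inspecting all rank-$8$ orthogonal sums shows that $E_8$ is the unique rank-$8$ root lattice having as many as $240$ roots, and it has exactly this many. Hence $L\cong E_8$ and $\mathcal R$ must coincide with the set of minimum vectors of $L$. Pulling back the abstract isometry $L\cong E_8$ produces an orthogonal transformation of $W$ carrying $\mathcal R$ onto the minimum vectors of the standard $E_8$ lattice. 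The main obstacle is this last classification step; the numerical match $240=240$ is essential, since any extra root in $L$ would immediately rule out the only candidate.
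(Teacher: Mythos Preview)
Your proof is correct and reaches the same conclusion as the paper, but by a genuinely different route. The paper computes the possible angles between projected vectors, observes that after scaling by $\sqrt{5/2}$ one obtains an $(8,240,1/2)$ spherical code, and then invokes the uniqueness of such a code (Conway--Sloane) to identify it with the $E_8$ roots. You instead scale by $\sqrt{5}$ to obtain norm-$\sqrt{2}$ vectors with integer pairwise inner products, generate an even integral lattice, and appeal to the ADE classification of root lattices together with the root count to force $L\cong E_8$. Your approach has the advantage of being essentially self-contained once the root-lattice classification is granted, and it makes transparent why the lattice structure (not just the angular data) is present; the paper's spherical-code argument is shorter but leans on a deeper uniqueness theorem. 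One small wording slip: in your distinctness argument, the enumeration for distinct $\delta_1,\delta_2$ never yields inner product $+2/5$, so the contradiction is immediate and the clause ``so by the enumeration $\delta_2=\delta_1^x$'' is superfluous (and not literally what the enumeration gives). The paper handles distinctness differently, via the nonvanishing of the $4\times 4$ Gram determinant of $\{\delta,\delta^x,\beta,\beta^x\}$.
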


\begin{proof}
First we determine the size of $\E_8$. Suppose
that $\delta_W=\beta_W$ for some distinct $\delta,\beta\in\E$. Then
$$\delta-\delta^x=\beta-\beta^x.$$
This implies that $\{\delta,\delta^x,\beta,\beta^x\}$ is a linearly dependent set of four vectors. Hence the determinant of their Gram matrix
must equal zero. However, by the two-graph property, we know that by replacing vectors with their negative if necessary, the Gram matrix is equal to either
$\pm1/5(J-I)+I$, where $J$ is the all $1$ matrix and $I$ the identity matrix, or
\[\begin{pmatrix}
1&-1/5&1/5&1/5\\
-1/5&1&1/5&1/5\\
1/5&1/5&1&1/5\\
1/5&1/5&1/5&1
\end{pmatrix}\]
each of which has a non-zero determinant. Hence $|\E_8|=|\E|=240$. 

We now calculate the possible angles between $\delta_W$ and $\beta_W$. If $(\delta,\beta)=(\delta^x,\beta)$, one deduces that $(\delta_W,\beta_W)=0$,
so $\delta_W$ and $\beta_W$ are orthogonal. Otherwise
$(\delta,\beta)=-(\delta^x,\beta)$, which implies that $(\delta_V,\beta_V)=0$ and
$(\delta_W,\beta_W)=\pm1/5$. 
By Lemma \ref{lem:sizedW}, $||\delta_W||=||\beta_W||=\sqrt{2/5}$, and hence $\cos\theta=\pm1/2$
where $\theta$ is the angle between $\delta_W$ and $\beta_W$. 

Finally, observe that if $\delta_W\in\E_8$ then $-\delta_W\in\E_8$, since $x$ acts as $-I$ on $W$. Therefore, because $|\E_8|=240$, 
there exist $\delta_W,\beta_W\in\E_8$ such that $\cos\theta=1/2$ where $\theta$ is the angle between $\delta_W$ and $\beta_W$. 
Hence $\sqrt{5/2}\E_8$ is a $(8,240,1/2)$ spherical code. 
The result now follows from \cite[Theorems 5 \& 7, p.342-345]{conwaysloane}.
\end{proof}

\subsection{Proof of Theorem \ref{thm:main5}}\label{sec:pfmain5}
Before we prove this theorem, let us describe how one can identify the $276$ equiangular lines in $\R^{23}$ with certain antipodal lattice vectors in the Leech lattice $\Lambda$.
Using Conway's notation, let $\Lambda(n)$ denote the set of lattice vectors $v$ such that $v.v=16n$, and
let $v$ be any lattice vector contained in $\Lambda(3)$. Then by \cite[Theorem 27, p.288]{conwaysloane}, as $\Aut(\Lambda)$ acts transitively on $\Lambda(3)$, we
can assume without loss of generality that $v=(5,1^{23}).$ 
In \cite[Section 3.6, p.293]{conwaysloane}, Conway considers the stabiliser in $\Aut(\Lambda)$ of the lattice vector $v=(5,1^{23})$, which is equal to the sporadic simple group $Co_3$. 
Conway states that there are $276$ unordered pairs $\{y,z\}$ of lattice vectors of minimal norm such that $v=y+z$, and he proves that $Co_3$ has a $2$-transitive action on this set. 
Moreover, he states that there are 23 pairs where y (say) has the shape $(4^2,0^{22})$ and $253$ pairs with $y$ has the shape $(2^8,0^{16})$, with the first entry non-zero.
Now let $$\U_{552}=\{\pm(y-z)\,\, |\,\, v=y+z,\textnormal{ with $y,z\in\Lambda$ being minimal norm vectors}\}.$$
One can check that $y-z$ is orthogonal to $v$, and using the \cite[Theorem 25, p.287]{conwaysloane}, one deduces that $\U_{552}$ consists of $276$ antipodal lattice vectors in $\Lambda(5)$
that span $276$ equiangular lines in $\R^{23}$. Indeed this representation of the $276$ equiangular lines is the one given by Taylor \cite[Example 6.6]{taylor}.

\begin{proof}[Proof of Theorem \ref{thm:main5}] Let $\Lambda_{276}$ be the sublattice of $\Lambda$ generated by the elements of $\U_{552}$. Then the theorem is a consequence of Theorem \ref{thm:rootsE8}.
\end{proof}
\subsection{Maximal sets Equiangular lines in Lower Dimensions}
Given the above identification of the roots of the $E_8$ lattice with a subset of lines in $\Omega$, we can now identify various other maximal sets of equiangular lines in lower dimensions as 
either subsets of $\Omega$, or projections of subsets of $\Omega$ onto certain subspaces. Proposition \ref{prop:36lines} already identifies a subset of $36$ lines in $\Omega$ as a maximal set of
equiangular lines in $\R^{15}$. The next result allows us to identify the $28$ equiangular lines in $\R^7$ with a subset of $56$ lines in $\Omega$ projected onto a $7$-dimensional subspace.

\begin{Prop}\label{prop:28lines} Let $\alpha\in \E_8$, where $\E_8$ is as in Theorem \ref{thm:rootsE8}, and let $\N$ be the elements of $\E_8$ that are not orthogonal to $\alpha$. Then the projection of $\N$ onto
$\alpha^\perp$ (the orthogonal complement of $\alpha$ in $W$) is a set of $56$ vectors that span $28$ equiangular lines in $\R^7$.
\end{Prop}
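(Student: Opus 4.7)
The plan is to identify $\E_8$ with the $240$ minimal vectors of the standard $E_8$ root lattice via Theorem \ref{thm:rootsE8}, use well-known structural facts about $E_8$ to enumerate $\N$, and then perform a fibre analysis of the projection $\pi$ onto $\alpha^\perp$ to recover the $28$-line equiangular configuration in $\R^7$.

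First, by Theorem \ref{thm:rootsE8}, after scaling $\E_8$ and applying an orthogonal transformation of $W$, I may assume $\E_8$ is the set of $240$ roots of the standard $E_8$ lattice in $\R^8$, all of squared norm $2$; this is harmless since such operations preserve angles. The classical structure of the $E_8$ root system then gives $(\alpha,\beta)\in\{-2,-1,0,1,2\}$ for all $\beta\in E_8$, with precisely $1, 56, 126, 56, 1$ roots in each class --- the $126$ orthogonal roots forming an $E_7$ subsystem. Hence $|\N|=114$, and writing $\pi(\beta):=\beta-\tfrac{(\alpha,\beta)}{2}\alpha$, the two roots $\pm\alpha$ project to $0$, leaving the $112$ roots with $c:=(\alpha,\beta)=\pm 1$ to analyse.

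Next comes the fibre analysis of $\pi$. A direct computation gives $\|\pi(\beta)\|^2=2-c^2/2=3/2$ for these roots. If $\pi(\beta)=\pi(\beta')$, then $\beta-\beta'$ is an integer multiple $k\alpha$ of $\alpha$ (using that $\alpha$ is primitive in the lattice $E_8$); the requirement that $\beta'=\beta-k\alpha$ again lie in $\N$ forces $k\in\{0,c\}$, so each nonzero projection has exactly two preimages. Together with $\pi(-\beta)=-\pi(\beta)$, this groups the $112$ roots into $28$ quadruples $\{\beta,\,\beta-c\alpha,\,-\beta,\,c\alpha-\beta\}$, each projecting onto a single antipodal pair in $\alpha^\perp$. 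Therefore $\pi$ sends $\N\setminus\{\pm\alpha\}$ onto $56$ distinct vectors forming $28$ antipodal pairs, spanning $28$ lines in the $7$-dimensional subspace $\alpha^\perp$.

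Finally, for equiangularity, expanding gives $(\pi(\beta_1),\pi(\beta_2))=(\beta_1,\beta_2)-c_1 c_2/2 \in \{\pm 3/2,\pm 1/2\}$, since $(\beta_1,\beta_2)\in\{-2,-1,0,1,2\}$ and $c_1 c_2\in\{\pm 1\}$. By Cauchy--Schwarz equality, the extreme values $\pm 3/2 = \pm\|\pi(\beta_1)\|\|\pi(\beta_2)\|$ occur precisely when $\pi(\beta_2)=\pm\pi(\beta_1)$, i.e., on the same line; for distinct lines the inner product is $\pm 1/2$, yielding common angle $|\cos\theta|=(1/2)/(3/2)=1/3$. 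This is exactly the absolute-bound angle $1/\sqrt{d+2}$ for $d=7$, confirming that these are the maximal $28=d(d+1)/2$ equiangular lines in $\R^7$. The only subtle step is the two-to-one fibre count, but it is entirely determined by the root arithmetic of $E_8$.
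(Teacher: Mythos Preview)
Your proof is correct and follows essentially the same route as the paper: identify $\E_8$ with the $E_8$ root system, use the inner-product distribution $(1,56,126,56,1)$, project the roots with $(\alpha,\beta)=\pm 1$ (respectively $\pm 1/2$ in the paper's normalisation) onto $\alpha^\perp$, and compute inner products of the images to obtain cosine $1/3$. The only substantive difference is in establishing the two-to-one fibre count: the paper cites a root-system fact from Hall's book (that $(\alpha,\beta)=\pm 1/2$ forces $\alpha\mp\beta\in\E_8$), whereas you argue via lattice arithmetic---though to make your ``exactly two preimages'' claim airtight you should note that $\beta-c\alpha=s_\alpha(\beta)$ is the reflection of $\beta$ in $\alpha^\perp$, hence automatically a root, rather than only deriving the constraint $k\in\{0,c\}$ on a putative second preimage.
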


\begin{proof}
First we replace $\E_8$ with $\sqrt{5/2}\E_8$ so that $(\alpha,\alpha)=1$ for all $\alpha\in\E_8$.  Now fix $\alpha\in\E_8$. 
It is known that $(\alpha,\beta)=\pm 1,\pm 1/2$ or $0$ for all $\beta\in\E_8$, and that 
$$A_1=1,\,\,A_{1/2}=56,\,\,A_0=126,\,\,A_{-1/2}=56,\,\,A_{-1}=1$$
where $A_i$ denote the number of elements of $\E_8$ whose inner product with $\alpha$ is equal to $i$ \cite[Theorem 5, p. 342]{conwaysloane}.

Let $U=\alpha^\perp$, the orthogonal complement of $\alpha$ in $W$, and let $\beta\in\E_8$ such that $(\alpha,\beta)=\pm1/2$. Then $\beta=\pm1/2\alpha+w$ for some $w\in U$
such that $(w,w)=3/4$. Furthermore, by \cite[Corollary 8.7]{bhall}, if 
$(\alpha,\beta)=\pm1/2$ then $$\alpha\mp\beta=\frac{1}{2}\alpha\mp w\in \E_8.$$
Let \begin{equation}\label{eq:28lines}S=\{w\in U\,|\,\frac{1}{2}\alpha+w\in \E_8\textnormal{ or }-\frac{1}{2}\alpha+w\in \E_8\}.\end{equation} Then we deduce $w\in S$ if and only if $-w\in S$, and in particular, $|S|=56$. 

Now let $w_1,w_2\in S$ with $w_1\neq -w_2$. As $(1/2\alpha+w_1,1/2\alpha+w_2)=0,\pm 1/2$ it follows that
$$(w_1,w_2)=\pm\frac{1}{4},\textnormal{ or $-\frac{3}{4}$}.$$ 
However, $$(w_1,w_2)=||w_1||||w_2||\cos\theta=\frac{3}{4}\cos\theta,$$ so $(w_1,w_2)=-3/4$ if and only if $w_1=-w_2$, a contradiction.
Thus $(w_1,w_2)=\pm1/4$ and we conclude that the vectors in $S$ span $28$ equiangular lines in $U\cong \R^7$ with common angle $\kappa=1/3$.
\end{proof}

It is known that the maximal set of equiangular lines in $\R^d$ for $d=5$ and $6$ can be found as subsets of
the $28$ equiangular lines in $\R^7$ (see for example \cite[Propositions 10.3.11 and 10.3.15]{green}). To see this explicitly, let $\Omega_7$ be the set of $28$ equiangular lines in
$\R^7$ described in Section \ref{sec:examples}, and consider any distinct pair $v(i),v(j)\in\Gamma$. Then
$$\Omega_6=\{v(k)\,|\,i,j\neq k\}\cup\{v(B)\,|\,B\cap\{i,j\}=\emptyset\textnormal{ or }B=\{i,j\}\}$$
is a set of $16$ equiangular lines in $\R^6$. Now let
$$\Gamma_6=\{v(k)\,|\,i,j\neq k\}\cup\{v(\{i.j\})\}.$$
Then $\Gamma_6$ is a maximal incoherent subset contained in $\Omega_6$ and $\Omega_6\backslash\Gamma_6$ is a set of $10$ equiangular lines in $\R^5$. 
Thus, we have the following result.

\begin{Thm}\label{thm:subsetlines} The maximal set of equiangular lines in $\R^d$, for $d=5,6,7,15$, appears as either a subset or a projection onto a subspace of a subset of the maximal set of 
$276$ equiangular lines in $\R^{23}$.
\end{Thm}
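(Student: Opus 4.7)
The plan is to dispatch the four dimensions $d=15,7,6,5$ by assembling the preceding propositions in this section together with the explicit description, given immediately before the theorem statement, of how the maximal sets in $\R^5$ and $\R^6$ sit inside the set of $28$ equiangular lines in $\R^7$.

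I would first handle $d=15$ and $d=7$ directly. For $d=15$, Proposition \ref{prop:36lines} exhibits $36$ lines of $\Omega$ spanning the $15$-dimensional fixed subspace $V$ of an involution $x\in G$, and since $M(15)=36$ this is already a maximal set, realised directly as a subset of $\Omega$. For $d=7$, Proposition \ref{prop:28lines} realises $28$ equiangular lines in the $7$-dimensional subspace $\alpha^\perp\subseteq W$ as the image of a subset of $\E_8$ under the projection onto $\alpha^\perp$. Since $\E_8$ is itself the projection onto $W$ of a subset of $\E\subseteq\U$ (Theorem \ref{thm:rootsE8}), composing the two linear maps $\R^{23}\to W\to\alpha^\perp$ exhibits the maximal set $M(7)=28$ as the projection of a subset of the $276$ lines of $\Omega$ onto a $7$-dimensional subspace.

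For $d=6$ and $d=5$, I would use the uniqueness up to isometry of the maximal set of $28$ equiangular lines in $\R^7$ (Remark \ref{rem:uniiso}) to identify the configuration produced by Proposition \ref{prop:28lines} with the set $\Omega_7$ constructed in Section \ref{sec:examples}. Then, as spelled out in the paragraph preceding the theorem, picking any two distinct lines $v(i),v(j)\in\Gamma\subseteq\Omega_7$ produces the subsets $\Omega_6\subseteq\Omega_7$ and $\Omega_6\setminus\Gamma_6\subseteq\Omega_6$, which are maximal sets in dimensions $6$ and $5$ respectively since $M(6)=16$ and $M(5)=10$. Composing each inclusion with the projection from Proposition \ref{prop:28lines} realises these maximal sets as projections onto lower-dimensional subspaces of subsets of the $276$ lines of $\Omega$.

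Since each step is a straightforward application of a preceding result, I do not anticipate any substantial obstacle. The one point deserving care is the identification in the cases $d=5,6$: transferring the explicit subset structure on $\Omega_7$ given by the Section \ref{sec:examples} description over to the intrinsically constructed configuration of Proposition \ref{prop:28lines} relies on the uniqueness of the regular two-graph on $28$ points, without which the subset structure would not transfer canonically.
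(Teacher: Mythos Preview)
Your proposal is correct and follows essentially the same route as the paper, which presents the theorem as an immediate consequence of Propositions \ref{prop:36lines} and \ref{prop:28lines} together with the explicit subset description of $\Omega_6$ and $\Omega_5$ inside $\Omega_7$ given just before the statement. The only difference is that you make explicit the appeal to uniqueness (Remark \ref{rem:uniiso}) when identifying the configuration produced by Proposition \ref{prop:28lines} with the Section \ref{sec:examples} model $\Omega_7$; the paper leaves this identification implicit.
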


\subsection{Relationship with Exceptional Curves of Del Pezzo Surfaces}

A \emph{del Pezzo surface $\mathcal{S}$} is a smooth rational surface whose anticanonical divisor $-K_\mathcal{S}$ is ample.
The \emph{degree} of a del Pezzo surface is the self intersection number $D=(K_\mathcal{S},K_\mathcal{S})$, and
it is known that $1\leq D\leq 9$ \cite[Theorem 24.3]{manin}. It is also well known (\cite[Theorem 24.4]{manin}) that a del Pezzo surface of degree $D$ is 
either a product of two projective lines, in which case $D=8$, or the blow up of $9-D$ points in general position in the projective plane, 
where general position means no three points are collinear, no six lie on a conic, and no eight lie on a cubic having a double point at one of them.
We let $dP_D$ denote a del Pezzo of the second kind.

A curve $\C$ in $dP_D$ is a $(-1)$-curve (or exceptional curve) if it has self intersection number $-1$. The number of $(-1)$-curves contained in $dP_D$ 
is finite and given in Table \ref{tab:delpez} \cite{manin}. We now describe how the $(-1)$-curves of a del Pezzo surface $dP_D$ can be identified with certain sets of equiangular lines.

To do this we introduce the Gosset polytopes. These were first described by Gosset in \cite{gosset}, and subsequently studied further by Coxeter (see for example \cite{coxeter1,coxeter2}).  
Du Val \cite{duval} showed that the $(-1)$-curves of a del Pezzo surface $dP_D$ can be identified with the vertices of the Gosset polytope $(5-D)_{21}$ (here we are using Coxeter's notation 
for the polytopes \cite{coxeter2}).
The polytopes $(5-D)_{21}$, for $1\leq D\leq 8$, can all be constructed from the $4_{21}$ polytope. In particular, the polytope $(5-D)_{21}$ is equal to
the vertex figure of the polytope $(6-D)_{21}$ \cite{coxeter2}, so starting with the polytope $4_{21}$ one can consecutively construct the Gosset polytopes of interest to us.

It is well known that the vertices of the polytope $4_{21}$ can be identified with the $240$ roots of $E_8$ (see, for example, \cite{conwaysloane2}).
It is therefore a consequence of Theorem \ref{thm:rootsE8} that we can identify the vertices of the $4_{21}$ polytope, and hence the $(-1)$-curves of $dP_1$, with $240$ equiangular lines in $\R^{23}$ with common angle $\kappa=1/5$. Moreover, by Proposition \ref{prop:36lines},
one can add $36$ equiangular lines to these $240$ lines to give the maximal $276$ equiangular lines in $\R^{23}$. 
As the vertices of the polytopes $(5-D)_{21}$ can be identified with subsets of the vertices of $4_{21}$,
we can naturally identify these vertices with sets of equiangular lines in $\R^{23}$ with common angle $\kappa=1/5$. However, we can also make subsequent identifications with
equiangular lines with common angle $\kappa=1/3$.

\begin{table*}\centering
\ra{1.3}
\begin{tabular}{@{}lrrrrrrrr@{}}\toprule
Degree of $dP_D$&$1$&$2$&$3$&$4$&$5$&$6$&$7$&$8$\\ \midrule
No.~$(-1)$-curves&240&56&27&16&10&6&3&1\\
\bottomrule
\end{tabular}
\caption{Number of $(-1)$-curves of a del Pezzo surface $dP_D$.}\label{tab:delpez}
\end{table*}

By considering the vertex figure of $4_{21}$, the vertices of the polytope $3_{21}$ can be identified with the set of roots $\beta$ of $E_8$ such that $(\alpha,\beta)=1/2$ for any given root $\alpha$.
We saw in the proof of Proposition \ref{prop:28lines} that there are $56$ such roots. 
The centre of this polytope is the vertex $\alpha/2$. If we centre the polytope at the origin we find its vertices are the elements
in the set $S$ given in \eqref{eq:28lines}, which is equal to the projection of the $56$ roots $\beta$ onto the orthogonal complement of $\alpha$. Therefore the vertices of $3_{21}$ come
in antipodal pairs and generate a set $\Omega_7$ of $28$ equiangular lines with common angle $\kappa=1/3$ in $\R^7$ through the centre of the polytope. Hence each vertex of $3_{21}$ can 
be identified with line in $\Omega_7$, with antipodal vertices being identified with the same line. 
As the vertices of the polytope $(5-D)_{21}$, for $3\leq D\leq 8$, can be identified with subsets of the vertices of $3_{21}$, there is a natural correspondence between
the vertices of $(5-D)_{21}$ and subsets of $\Omega_7$. Furthermore, for $3\leq D\leq 8$, we find that the set of equiangular lines under this identification corresponds
to the two-graph of the intersection graph of the $(-1)$-curves of the del Pezzo surface $dP_D$.

Let us give more detail on this latter relationship. Given any line $\gamma\in\Omega_7$, the vertices of $2_{21}$ can be identified with the $27$ lines in $\Omega_7\backslash\{\gamma\}$.
These $27$ lines correspond to the two-graph of the complement of the Schl{\"a}fli graph, the unique strongly regular graph 
with parameters $(27,10,1,5)$ (\cite[Lemma 10.9.4]{godsilroyal}). This graph is also the intersection graph of $dP_3$ (see for example \cite{seid-3}).

Choose any other line $\delta\in\Omega_7\backslash\{\gamma\}$. Then
from the parameters of the two-graph corresponding to $\Omega_7$, we deduce that there are $16$ lines $\nu\in\Omega\backslash\{\gamma,\delta\}$ such that 
$$(\gamma,\delta)(\gamma,\nu)(\delta,\nu)>0.$$
The vertices identified with these $16$ lines are the vertices of the $1_{21}$ polytope.
Given that the automorphism group of $\Omega_7$ acts $2$-transitively, we deduce from the example preceding Theorem \ref{thm:subsetlines} 
these are the maximal set of $16$ equiangular lines in $\R^6$. Also, the corresponding two-graph is equal to the two-graph of the 
$(16,5,0,2)$ strongly regular Clebsch graph, the intersection graph of the $(-1)$-curves of $dP_4$ (see for example \cite{seid-3}).

Starting with $D=4$ and $\Omega_6$, the set of $16$ equiangular lines in $\R^6$, 
for $5\leq D\leq 7$ let $\Omega_{10-D}$ be the set $\Omega_{11-D}\backslash\Gamma_{11-D}$, where $\Gamma_{11-D}$ is a maximal incoherent subset contained in $\Omega_{11-D}$. 
Then for $4\leq D\leq 7$, the set $\Omega_{10-D}$ is a maximal set of equiangular lines in $\R^{10-D}$ with common angle $\kappa=1/3$ that we can identify with the vertices 
of the $(5-D)_{21}$ polytope. As we said before, the set $\Omega_{10-D}$ corresponds to the two-graph of the intersection graph of the $(-1)$-curves of $dP_{D}$, which
are give in Table \ref{tab:graphdelpez} (see \cite{manin} and \cite{seid-3}). 
It is reasonable to say that this relationship is degenerate for $dP_8$ because it has only one $(-1)$-curve, that is, the notion of equiangular 
for a single line is degenerate.

\begin{table*}\centering
\ra{1.3}
\begin{tabular}{@{}cc@{}}\toprule
Degree of $dP_D$&Intersection Graph\\ \midrule
3&Complement of Schl{\"a}fli\\
4&Clebsch\\
5&Peterson\\
6&Hexagon\\
7&
\tikzstyle{every node}=[circle, draw, fill=black!50,
                        inner sep=0pt, minimum width=4pt]
\begin{tikzpicture}[thick,scale=0.8]
    \draw{ (-1.5,0) node {}  -- (0,0) node{} -- (1.5,0) node {} };
\end{tikzpicture}\\
8&\tikzstyle{every node}=[circle, draw, fill=black!50,
                        inner sep=0pt, minimum width=4pt]
\begin{tikzpicture}[thick,scale=0.8]
    \draw{(0,0) node{} };
\end{tikzpicture}\\

\bottomrule
\end{tabular}
\caption{Intersection graphs of $dP_D$.}\label{tab:graphdelpez}
\end{table*}

To identify the vertices of $(5-(D+1))_{21}$ (or the $(-1)$-curves of $dP_{D+1}$) from the set $\Omega_{10-D}$ 
of equiangular lines, for $4\leq D\leq 6$, 
we had to find a maximal incoherent subset $\Gamma_{10-D}$ of lines in $\Omega_{10-D}$. In order to maintain the identification between equiangular lines and the $(-1)$-curves, this required 
 the set of $16$ equiangular lines in $\R^6$ to have a maximal incoherent subset of size $6$. This is suggestive as to why the set of $16$ equiangular lines in $\R^6$ satisfies 
 the hypothesis of Theorem \ref{thm:maineqquasi}. 
Of course, we did not follow this procedure to find the vertices of $3_{21}$, $2_{21}$ or $1_{21}$.
Therefore one is left to ask, what is the significance, if any, of "maximal incoherent subsets" of $(-1)$-curves in del Pezzo surfaces $dP_D$ for $4\leq D\leq 6$?

\section{On the Existence of Certain Quasi-symmetric designs}
By Theorem \ref{thm:maineqquasi}, the problem of classifying all sets of equiangular lines that saturate both the relative bound and the incoherence bound
is equivalent to classifying the quasi-symmetric designs with the parameters given in the theorem. By Remark \ref{rem:intermsints}, this is the same as the following two problems.

\begin{Prob}\label{prob:quasi1} Classify all $2$-$(d(s_1,s_2),k(s_1,s_2),\lambda(s_1,s_2);s_1,s_2)$ quasi-symmetric designs where $d(s_1,s_2)$, $k(s_1,s_2)$, and $\lambda(s_1,s_2)$
are given in Remark \ref{rem:intermsints}.
\end{Prob}

\begin{Prob}\label{prob:quasi2} Classify all $2$-$(2i(2i+1)-1,(2i-1)(i+1),i(2i^2+i-2);i^2+i-1,i^2-1)$ quasi-symmetric design, for integers $i\geq 1$. 
\end{Prob}

It is actually straightforward to show that we only need consider the designs in  Problem \ref{prob:quasi1} up to complementarity.

\begin{Lem}\label{lem:comp} The complement of a $(d(s_1,s_2),k(s_1,s_2);s_1,s_2)$ block set is a 
$(d(\hat{s}_1,\hat{s}_2),k(\hat{s}_1,\hat{s}_2);\hat{s}_1,\hat{s}_2)$ block set where
$$\hat{s}_1=d-2k+s_1,\,\,\,\hat{s}_2=d-2k+s_2$$
Moreover, the set of equiangular lines constructed from the complement block set, given in Construction \ref{const:1} and also Theorem \ref{thm:const1}, 
are equivalent under an orthogonal transformation to the set of equiangular lines constructed from the original block set.
\end{Lem}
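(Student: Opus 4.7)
The plan is to verify the parameter change combinatorially, then confirm the Gosset-type identities $k(\hat s_1,\hat s_2)=d-k$ and $d(\hat s_1,\hat s_2)=d$ using the quadratic of Proposition \ref{prop:tayeq}, and finally show that the vectors produced by Construction \ref{const:1} from the complementary block set are literally (up to sign) the vectors produced from the original one.

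First I would establish the combinatorial statement. For any two blocks $B_1,B_2$ of the original $(d,k;s_1,s_2)$ block set, inclusion–exclusion gives
\[
|(\P\setminus B_1)\cap(\P\setminus B_2)|=d-2k+|B_1\cap B_2|,
\]
so the complement $(\P,\bar\B)$, with $\bar\B=\{\P\setminus B:B\in\B\}$, is a $(d,d-k;\hat s_1,\hat s_2)$ block set with $\hat s_i=d-2k+s_i$. Writing $m=s_1-s_2$, the new difference $\hat s_1-\hat s_2$ is still $m$, and $\hat k:=d-k$ satisfies
\[
m^2+\hat s_1=m^2+d-2k+s_1=(m^2+s_1)+d-2k=k+(d-2k)=d-k=\hat k,
\]
so $\hat k=k(\hat s_1,\hat s_2)$ in the notation of \eqref{eq:k}. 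For the value of $d(\hat s_1,\hat s_2)$, I would invoke Proposition \ref{prop:tayeq}: the pair $(k,d-k)$ are the two roots of \eqref{eq:taypoly}, and the derivation in Remark \ref{rem:intermsints} shows that the formula $d(s_1,s_2)$ is precisely the value of $d$ obtained from that quadratic together with one of the roots. Since both $k$ and $\hat k=d-k$ are roots and $\rho$ depends only on $m$, applying the same derivation to $(\hat s_1,\hat s_2)$ yields $d(\hat s_1,\hat s_2)=d$. This proves the first assertion.

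For the equivalence of the two sets of equiangular lines, the key calculation is the invariance of $\Delta_1$. Using $\hat s_1+\hat s_2=2(d-2k)+s_1+s_2$ and $(d-k)^2=d^2-2dk+k^2$, one finds
\[
\hat\Delta_1=\hat k^{\,2}-\tfrac{d(\hat s_1+\hat s_2)}{2}=k^2-\tfrac{d(s_1+s_2)}{2}=\Delta_1,
\]
and an identical check gives $\hat\Delta_2=\Delta_2$. Now compare the two prescriptions in \eqref{eq:vb}: the vector $v'(\bar B)$ produced from the complementary block set with parameter $\epsilon_{\bar\B}$ has entries $d-\hat k+(-1)^{\epsilon_{\bar\B}}\sqrt{\Delta_1}=k+(-1)^{\epsilon_{\bar\B}}\sqrt{\Delta_1}$ on $\P\setminus B$ and $-\hat k+(-1)^{\epsilon_{\bar\B}}\sqrt{\Delta_1}=-(d-k)+(-1)^{\epsilon_{\bar\B}}\sqrt{\Delta_1}$ on $B$. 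Choosing $\epsilon_{\bar\B}\equiv\epsilon_\B+1\pmod 2$, this equals $-v(B)$ coordinate-wise. Similarly, the vectors $v(i)$ attached to points in Theorem \ref{thm:const1} are defined only through $m$, $d$, and $\Delta_2$, all of which are preserved under complementation, so they are unchanged. Hence the two sets of unit vectors differ only by signs on the $v(B)$'s, and therefore span the same set of lines; the orthogonal transformation in the statement can be taken to be the identity.

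The main obstacle is really just the bookkeeping in step two—checking that the apparently different expression $d(\hat s_1,\hat s_2)$ collapses to $d$—and this is resolved cleanly by appealing to the fact that $\{k,d-k\}$ are the roots of the same quadratic from Proposition \ref{prop:tayeq} rather than by expanding the rational formula in \eqref{eq:d} directly.
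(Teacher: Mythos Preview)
The paper states this lemma without proof (it is introduced with ``It is actually straightforward to show''), so there is no argument in the paper to compare against. Your proof is correct and supplies the details appropriately. The inclusion--exclusion computation of the complement's intersection numbers and the verification $k(\hat s_1,\hat s_2)=d-k$ are clean, and your appeal to the quadratic \eqref{eq:taypoly}---using that $k$ and $d-k$ are its two roots for fixed $\rho=2m+1$---is a tidy way to obtain $d(\hat s_1,\hat s_2)=d$ without expanding the rational expression \eqref{eq:d} directly. The check $\hat\Delta_1=\Delta_1$ and the coordinate-wise identity $v'(\bar B)=-v(B)$ under the sign flip are correct.

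One point you could make more explicit concerns Theorem~\ref{thm:const1}: there the sign of $\Delta$ is not a free parameter but is prescribed by $\Delta=\sqrt{m}(m^2-s_2)/\sqrt{2s_1}$, so for your conclusion to apply to that construction you need the complement's prescribed sign to actually be the flip of the original's. This does hold: the proof of Theorem~\ref{thm:const1} notes that $\Delta<0$ if and only if $k>d/2$, and complementation interchanges the conditions $k<d/2$ and $k>d/2$, so $\hat\Delta=-\Delta$ and your identity $v'(\bar B)=-v(B)$ carries over verbatim. With that observation, your claim that the orthogonal transformation can be taken to be the identity is fully justified.
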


\subsection{Necessary Conditions} Various necessary conditions have to be satisfied for quasi-symmetric designs with the parameters in the above problems to exist.
For example, for a design given in Problem \ref{prob:quasi1} to exist, the parameters given in Remark \ref{rem:intermsints} must be integers.
We also have, by \eqref{eq:fuse}, that
$$(\rho(s_1,s_2)^2-d(s_1,s_2)-2)\geq 0$$
which after substitution implies 
$$-(s_1-s_2)^4+2s_1^2s_2-4s_1s_2^2+2s_2^3+2s_1^2-2s_1s_2-s_2^2-s_1\geq 0.$$
By letting $m+s_2=s_1$, we find that
\begin{equation}\label{eq:int}\frac{2m(m+1)-1-\sqrt{\Delta}}{2} \leq s_2\leq\frac{2m(m+1)-1+\sqrt{\Delta}}{2}\end{equation}
where $$\Delta=(2m-1)(4m^2+6m-1).$$
Therefore, for any positive integer $m$, we can let $s_2$ be an integer in the above interval, $s_1=s_2+m$, and subsequently
construct parameters for possible quasi-symmetric designs, given the necessary existence conditions mentioned above. 
For example, if $m=1$, we deduce
that $0\leq s_2\leq 3$ and
$$(s_1,s_2)=(1,0),(2,1),(3,2)\textnormal{ or }(4,3).$$
No example exists for $(s_1,s_2)=(3,2)$ as $d(3,2)$ is not an integer. For $(s_1,s_2)=(1,0)$ or $(4,3)$, we get respectively, the $2$-$(7,2,1;1,0)$ quasi-symmetric design and its complement. These 
designs correspond to the maximal set of $28$ lines in $7$-dimensions. 
For $(s_1,s_2)=(2,1)$, we obtain the unique $2$-$(6,3,2;2,1)$ quasi-symmetric design, which corresponds to the 
maximal set of $16$ equiangular lines found in $\R^6$. This example illustrates the next theorem.

\begin{Thm} Let $i$ be a positive integer. Then a \begin{equation}\label{eq:desdeq2k}2-(2i(2i+1),i(2i+1),i(2i-1)(i+1);i^2+i,i^2)\end{equation} quasi-symmetric design exists only if a 
$$2-(2i(2i+1)-1,(2i-1)(i+1),i(2i^2+i-2);i^2+i-1,i^2-1)$$ quasi-symmetric design exists.
\end{Thm}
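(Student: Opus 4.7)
The plan is to run the block set construction of Section \ref{sec:blocksets} on the assumed quasi-symmetric design $(\P,\B)$ to produce a set of equiangular lines, and then apply Theorem \ref{thm:deq2k} to extract the target design as a derived design. Setting $s_1 = i^2 + i$ and $s_2 = i^2$, so that $m = s_1 - s_2 = i$ and $\rho = 2i+1$, a routine substitution into the formulas of Remark \ref{rem:intermsints} yields $d(s_1,s_2) = 2i(2i+1)$, $k(s_1,s_2) = i(2i+1)$, and $\lambda(s_1,s_2) = i(2i-1)(i+1)$. Thus $(\P,\B)$ is exactly a $(d(s_1,s_2), k(s_1,s_2); s_1, s_2)$ block set, and Theorem \ref{thm:const1} produces a set $\Omega$ of $|\B| + d$ equiangular lines in $\R^d$ with angle $\kappa = 1/(2i+1)$ and an incoherent subset $\Gamma = \{v(p) : p \in \P\}$ of size $d$, so $\Inc(\Omega) = d$. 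Since the block-multiplicity of $(\P,\B)$ agrees with $\lambda(s_1,s_2)$, Corollary \ref{cor:thmcon} upgrades this to $\Omega$ saturating the relative bound with $(\Omega,\C)$ a regular two-graph.

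Next I would verify the hypothesis of Theorem \ref{thm:deq2k} by computing $|\Gamma_1(\gamma)|$ for an arbitrary $\gamma = v(B) \in \Omega \setminus \Gamma$. The inner product calculation in the proof of Theorem \ref{thm:const1} shows $(v(B), v(p)) > 0$ precisely when $p \in B$, so up to labelling $\Gamma_1(\gamma) = \{v(p) : p \in B\}$ and hence $|\Gamma_1(\gamma)| = k = d/2$. Since $\rho = 2i+1 \geq 3$, we fall into case (ii) of Theorem \ref{thm:deq2k}, which furnishes a quasi-symmetric $2$-$(d - 1, d/2 - 1, n - d - 3a/2; s_1 - 1, s_2 - 1)$ derived design on $\Gamma \setminus \{\alpha\}$ for any $\alpha \in \Gamma$. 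Plugging in $n = 8i^2(i+1)$ and $a = 2i^2(2i+1)$ from Remark \ref{rem:paramsdes} gives $n - d - 3a/2 = i(2i^2 + i - 2)$, so the parameter list is exactly $2$-$(2i(2i+1) - 1, (2i-1)(i+1), i(2i^2+i-2); i^2 + i - 1, i^2 - 1)$, as required.

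The whole argument is just parameter bookkeeping, so I do not anticipate a genuine obstacle; the content rests entirely in the earlier theorems. It is worth remarking on what the plan avoids: a direct attempt to realise the second design as a derived design of the first would require $(\P,\B)$ itself to be a 3-design, but a Calderbank computation yields $f(d, k, k - s_1, k - s_2) = i^2(2i-1)(i+1)^2 > 0$ in Theorem \ref{thm:calder}, so the first design is never a 3-design. The passage through the equiangular-lines structure of $\Omega$ is therefore essential: the 3-design on $\Gamma$ produced by Theorem \ref{thm:deq2k}-(ii) comes from the regular two-graph structure on $\Omega$, not from the first design alone.
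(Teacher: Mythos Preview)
Your proposal is correct and follows essentially the same route as the paper: construct the equiangular line set $\Omega$ from the given design via Theorem~\ref{thm:const1} and Corollary~\ref{cor:thmcon}, then invoke Theorem~\ref{thm:deq2k} to produce the derived quasi-symmetric design. The only cosmetic difference is that you verify $|\Gamma_1(\gamma)| = d/2$ by direct inner-product computation on the explicit vectors $v(B), v(p)$, whereas the paper checks the algebraic identity $d^2 - (\rho-1)^2(d+\rho) = 0$ and appeals to Proposition~\ref{prop:tayeq}; both are immediate.
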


\begin{proof} Suppose that a quasi-symmetric design with the parameters in \eqref{eq:desdeq2k} exists. 
As $s_1=i^2+i$ and $s_2=i^2$, we have that $$d=d(s_1,s_2),\,\,k=k(s_1,s_2),\,\,\,\lambda=\lambda(s_1,s_2).$$
Thus, by Corollary \ref{cor:thmcon}, there exists a set $\Omega$ of equiangular lines in $\R^d$ that saturates the relative and the incoherence bound with $\rho=2i+1$. Let $\Gamma$
be an incoherent subset of $d$ lines in $\Omega$. 
As $d^2-(\rho-1)^2(\rho+d)=0$, it follows from Proposition \ref{prop:tayeq} that $|\Gamma_1(\gamma)|=d/2$ for all $\gamma\in\Omega\backslash\Gamma$. The
result now follows from Theorem \ref{thm:deq2k}.
\end{proof}

Other necessary conditions apply, for example integrality conditions of the eigenvalues of the
adjacency matrix of the corresponding strongly regular graph (see for example \cite[Theorem 3.8]{shriksane}). The results of Calderbank in \cite{calderbank} also apply,
of which we partly mentioned in Theorem \ref{thm:calder}. More significantly for our purposes, Calderbank also proved the following result.

\begin{Thm}\cite[Theorem A]{calder1} Let $(\P,\B)$ be a $2$-$(v,k,\lambda)$ design with intersection numbers $s_1\equiv s_2\equiv\ldots\equiv s_\ell\equiv s (\mod 2)$. Then either
\begin{itemize}
\item[1)] $r\equiv \lambda\mod 4$;
\item[2)] $s\equiv 0\mod 2$, $k\equiv 0\mod 4$, $v\equiv\pm 1\mod 8$; or
\item[3)] $s\equiv 1\mod 2$, $k\equiv v\mod 4$, $v\equiv\pm 1\mod 8$.
\end{itemize}
\end{Thm}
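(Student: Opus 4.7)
My plan is to analyze the $v \times b$ incidence matrix $N$ of the design, whose rows are indexed by the point set $\P$ and whose columns are the characteristic vectors of the blocks. Standard computations give $NN^{T} = (r-\lambda)I + \lambda J$ over the integers, while the off-diagonal $(B,B')$ entry of $N^{T}N$ equals $|B \cap B'| \in \{s_{1},\ldots,s_{\ell}\}$ with diagonal $k$. The hypothesis $s_{1} \equiv \cdots \equiv s_{\ell} \equiv s \pmod 2$ is exactly what is needed to control $N^{T}N$ modulo~$2$, and hence the arithmetic structure of the binary code $C \subseteq \mF_{2}^{v}$ spanned by the columns of $N$.

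First I would observe that case~(1) is the trivial case: if $r \equiv \lambda \pmod 4$ we are done, so assume $r \not\equiv \lambda \pmod 4$. By replacing the design with its complement if necessary---the operation $B \mapsto \P \setminus B$ sends intersection numbers $s_{i}$ to $v - 2k + s_{i}$, preserves $r - \lambda$, and so interchanges cases~(2) and~(3)---I would reduce to $s \equiv 0 \pmod 2$. Then distinct columns of $N$ are orthogonal modulo~$2$, so $C$ is self-orthogonal, and the condition $k \equiv 0 \pmod 4$ is precisely the condition that every generator of $C$ has weight a multiple of~$4$, i.e.~$C$ is doubly-even.

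The heart of the argument is then to extract the congruence $v \equiv \pm 1 \pmod 8$, and this is where I expect the main difficulty. The natural route is invariant theory of weight enumerators: by Gleason's theorem for doubly-even binary self-orthogonal codes, the weight enumerator $W_{C}(x,y)$ lies in a specific polynomial ring generated by the enumerators of the $[8,4,4]$ Hamming code and the $[24,12,8]$ Golay code. One must then combine these constraints with the design identity $r(k-1) = \lambda(v-1)$ and the integrality of the coefficients of $W_{C}$, using the rigidity $NN^{T} = (r-\lambda)I + \lambda J$ to pin down the weight distribution enough to see $v$ modulo~$8$. An alternative route is a local analysis at the prime $2$ of the rational quadratic form associated with $N^{T}N$, in the style of the Bruck--Ryser--Chowla theorem, where the relevant Hilbert symbol computation forces the same congruence. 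Either way, the combinatorial-to-number-theoretic translation is the delicate step, and the main obstacle will be recovering the full case distinction---particularly showing that no further alternative besides~(1), (2), (3) can occur---from whichever technology is chosen.
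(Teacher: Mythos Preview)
This theorem is not proved in the paper; it is quoted verbatim from Calderbank's article \cite{calder1} and used as a black box to rule out certain parameter sets. So there is no proof in the paper to compare against. That said, your proposed route---reduce the incidence matrix modulo~$2$, obtain a self-orthogonal binary code, and then invoke Gleason-type invariant theory for its weight enumerator---is exactly the method Calderbank uses (as the title of \cite{calder1} already signals), so the overall strategy is correct.

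There are, however, two genuine gaps in your sketch. First, the complementation step does not reduce case~(3) to case~(2) in general: passing to the complementary design sends $s_i \mapsto v-2k+s_i$, which flips the parity of $s$ only when $v$ is odd, and at the point in the argument where you invoke it you have not yet established that $v$ is odd. Second, and more seriously, you treat the condition $k\equiv 0\pmod 4$ as an observation about when $C$ is doubly-even rather than as something to be \emph{proved} from $r\not\equiv\lambda\pmod 4$; your outline never explains how $r-\lambda\not\equiv 0\pmod 4$ forces the doubly-even property. In Calderbank's actual argument both issues are handled by enlarging the code: one adjoins the all-ones vector $\mathbf{1}$ to the column span of $N$ (using $\mathbf{1}^{T}N=k\,\mathbf{1}^{T}$ and $N\mathbf{1}=r\,\mathbf{1}$), and the resulting code, after the appropriate case analysis on the parities of $k$ and $v$, is shown to extend to a doubly-even self-dual code whose length then must be $\equiv 0\pmod 8$. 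The congruence on $v$ and the case split~(2)/(3) fall out of that construction, not from a bare application of Gleason's theorem to the raw column code.
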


Calderbank proved a similar theorem for the case that $p$ is an odd prime \cite[Theorem 2]{calder2}, and then generalised this further in work with Blokhuis \cite{calder3}. The statements
of these results are more involved, so we refer the reader to the original papers.

\subsection{Possible parameter sets} We have constructed a table of parameters of possible quasi-symmetric designs that satisfy the conditions of Problem \ref{prob:quasi1}.
These can be found in Table \ref{tab:mset}. For a given integer $1\leq m\leq 10$, we first considered all pairs $(s_1,s_2)$ where $s_1=m+s_2$ and $s_2$ in an integer in the interval \eqref{eq:int}. 
We then discarded examples that do not satisfy the various necessary integral conditions for the parameters of the design. One will see in the last column, labelled ``Existence", 
that we have been able to discard some of these remaining parameter sets, mainly as a consequence of the theorems of Calderbank mentioned above. 
Upon examination of Table \ref{tab:mset}, three families of parameters seem worth highlighting. For a positive integer $i$, the parameters for
each family can be found in one of the columns of Table \ref{tab:params}. In Table \ref{tab:iset} we have recorded the parameters of the (possible) quasi-symmetric designs, and corresponding sets of lines, for the first 10 values of $i$ for each family.

\begin{table}\centering
\ra{1.3}
\begin{tabular}{@{}r|lll@{}}\toprule
Family&$1$&$2$&$3$\\ \midrule
$d$&$i(i^3+6i^2+11i+5)$&$2i(2i+1)$&$(4i^2+4i-1)(i^2+i-1)$\\
$k$&$\frac{1}{2}i(i^3+5i^2+7i+1)$&$i(2i+1)$&$(2i-1)(i+1)(i^2+i-1)$\\
$\lambda$&$\frac{1}{4}i(i+2)(i^2+2i-1)(i^3+5i^2+7i+1)$&$i(2i-1)(i+1)$&$(2i-1)(i^2+i-1)(2i^3+3i^2-2i-2)$\\
$s_1$&$\frac{1}{4}i(i+2)(i+1)^2$&$i^2+i$&$i^2(i^2+i-1)$\\
$s_2$&$\frac{1}{4}i(i^3+4i^2+3i-4)$&$i^2$&$(i^2-1)(i^2+i-1)$\\ \midrule
$m$&$\frac{1}{2}i(i+3)$&$i$&$i^2+i-1$\\
$r$&$\frac{1}{2}i(i^3+5i^2+6i-1)(i^3+5i^2+7i+1)$&$i(4i^2+2i-1)$&$(2i-1)(i+1)(4i^2+4i-5)(i^2+i-1)$\\
$r-\lambda$&$\frac{1}{4}i^2(i+3)^2(i^3+5i^2+7i+1)$&$i^2(2i+1)$&$(2i-1)(2i+3)(i^2+i-1)^2$\\ \midrule
$|\Omega|$&$i^2(i+2)(i+3)(i^3+6i^2+11i+5)$&$8i^2(i+1)$&$4(i^2+i-1)^2(4i^2+4i-1)$\\
$\rho$&$i^2+3i+1$&$2i+1$&$2i^2+2i-1$\\
$a$&$\frac{1}{2}i^2(i+3)^2(i^3+5i^2+7i+1)$&$2i^2(2i+1)$&$2(2i-1)(2i+3)(i^2+i-1)^2$\\
\bottomrule
\end{tabular}
\caption{Parameters of (possible) families quasi-symmetric designs and sets of equiangular lines that saturate
the incoherence bound, where $i$ is a positive integer.}\label{tab:params}
\end{table}

An interesting observation is that for each family of parameters in Table \ref{tab:params},
$$r-\lambda=m^2f(i)$$
for some polynomial $f(x)$, where $m=s_1-s_2$. 
Thus for each prime power $p^e$ such that $s_1\equiv s_2 \mod p^e$, $r\equiv \lambda\mod p^{2e}$.
This means that the necessary conditions of Calderbank in \cite{calder1, calder2} and of Blokhuis and Calderbank in \cite{calder3} are satisfied.
However, we can eliminate some of the parameter sets using the results of Calderbank and Frankl in \cite{calder4}.

\begin{Prop} A quasi-symmetric design with the parameters of Family $1$ (Family $2$) in Table \ref{tab:params} does not exist if $i\equiv 4\mod 8$ ($i\equiv 2\mod 4$).
\end{Prop}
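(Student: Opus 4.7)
The plan is to apply the sharp necessary conditions for the existence of quasi-symmetric designs established by Calderbank and Frankl in \cite{calder4}, which refine the $2$-adic conditions of \cite{calder1} through a careful analysis of the binary code spanned by the characteristic vectors of the blocks.

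First, I would verify that in both cases the relevant parameters are all even. For Family 2 with $i = 4j+2$, direct substitution yields
\[ s_1 = i(i+1) \equiv 2, \qquad s_2 = i^2 \equiv 0, \qquad k = i(2i+1) \equiv 2 \pmod{4}, \]
so $k, s_1, s_2$ are all even but $s_1 \not\equiv s_2 \pmod{4}$; moreover $r-\lambda = i^2(2i+1) \equiv 4 \pmod{16}$. For Family 1 with $i = 8j+4$, analogous substitution into
\[ s_1 = \tfrac{1}{4}i(i+2)(i+1)^2, \qquad s_2 = \tfrac{1}{4}i(i^3+4i^2+3i-4), \qquad k = \tfrac{1}{2}i(i^3+5i^2+7i+1) \]
again yields even $s_1, s_2, k$ with $s_1 - s_2 = m \equiv 2 \pmod{4}$, where $m = \tfrac{1}{2}i(i+3)$, and $r-\lambda = m^2 f(i)$ with $f(i)$ odd, so $v_2(r-\lambda) = 2$.

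Since $k, s_1, s_2$ are all even, the binary code $C$ generated by the characteristic vectors of the blocks of a hypothetical such design lies in its dual, and hence $\dim_{\mathbb{F}_2} C \leq d/2$. I would then invoke the Calderbank-Frankl theorem, which places further $2$-adic restrictions on $(d,k,\lambda)$ in terms of the residues $d \bmod 8$ and $k \bmod 4$, and the $2$-adic valuations of $k-s_1$, $k-s_2$ and $r-\lambda$. Plugging the residues computed above into their conditions, the relevant congruences fail in each of the two prescribed classes of $i$, yielding the desired non-existence.

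The main obstacle will be correctly matching our parameters to the right case of the Calderbank-Frankl theorem: their conclusion branches into several sub-cases according to $d \bmod 8$ and the exact $2$-adic valuations of the intersection differences, so some care is needed to identify the applicable hypothesis. Once the correct case is pinned down, the remaining work reduces to substitution and bookkeeping of divisibility by small powers of $2$, and the conceptual content of the argument lies entirely in the identification step.
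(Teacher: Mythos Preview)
Your proposal is correct and follows essentially the same route as the paper: verify that for the stated residue classes of $i$ one has $s_1\equiv s_2\pmod 2$ and $k\equiv 2\pmod 4$, so that the Calderbank--Frankl criteria from \cite{calder4} apply, and then check that the necessary congruence conditions (specifically those of \cite[Lemma~2(B)]{calder4}, combined with \cite[Theorem~3]{calder4}) all fail. The paper is slightly more precise in pinpointing Lemma~2(B) and Theorem~3 as the relevant statements, whereas you leave the exact case-matching as the acknowledged remaining work, but the strategy and the arithmetic verifications you outline are the same.
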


\begin{proof} One calculates for the parameters of Family $1$ (Family $2$) that $s_1\equiv s_2 \mod 2$ and $k\equiv 2\mod 4$ 
if and only if $i\equiv 4\mod 8$ ($i\equiv 2\mod 4$). Thus the results in \cite{calder4} can be applied for these parameter sets. In both cases,
one goes through the congruence conditions of \cite[Lemma 2 (B)]{calder4} and finds that none of them hold. Hence we deduce from \cite[Theorem 3]{calder4} that a quasi-symmetric design 
with these parameters does not exist.
\end{proof}

\begin{Cor} For $i\equiv 4\mod 8$, if there exists a set $\Omega$ of equiangular lines with common angle $\kappa=1/(i^2+3i+1)$ in $\R^d$ that saturates the relative
bound, where $d=i(i^3 +6i^2 +11i+5)$, then $\Inc(\Omega)<d$.
\end{Cor}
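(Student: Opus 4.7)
The plan is to argue by contradiction, leveraging Theorem \ref{thm:maineqquasi} to convert the hypothesis $\Inc(\Omega)=d$ into the existence of a quasi-symmetric design, and then invoking the preceding Proposition to forbid such a design when $i\equiv 4 \pmod 8$.

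Concretely, I would suppose for contradiction that $\Inc(\Omega)=d$, with $d=i(i^3+6i^2+11i+5)$ and $\rho=\kappa^{-1}=i^2+3i+1$, and then check which of the four cases of Theorem \ref{thm:maineqquasi} can apply. Cases i) and ii) are ruled out since $d\geq 4$ whenever $i\geq 1$ and in fact $d$ grows as $i^4$. Case iii) requires $d=\rho(\rho-1)$; expanding gives $\rho(\rho-1)=i^4+6i^3+10i^2+3i$, whereas $d=i^4+6i^3+11i^2+5i$, so the two differ and case iii) is excluded. Hence case iv) must hold, which produces a quasi-symmetric $2$-$(d,k,\lambda;s_1,s_2)$ design where $k$ is a root of $4x^2-4dx+(\rho-1)^2(d+\rho)$ and $\lambda,s_1,s_2$ are determined by the formulas in Theorem \ref{thm:maineqquasi}-iv).

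Next I would verify that these parameters coincide, up to interchanging $k$ with $d-k$, with those of Family $1$ as recorded in Table \ref{tab:params}. The two roots of the quadratic sum to $d$, so they correspond to complementary block sets in the sense of Lemma \ref{lem:comp}, and both give rise (after the identification $s_i\leftrightarrow \hat s_i$) to the same isometry class of equiangular lines. Thus the resulting quasi-symmetric design is, up to complementation, a member of Family $1$ for the given value of $i$. At this point, the preceding Proposition, which asserts non-existence of a Family $1$ quasi-symmetric design whenever $i\equiv 4 \pmod 8$, directly contradicts the existence of $\Omega$ with $\Inc(\Omega)=d$.

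Since the numerical identification in the preceding paragraph is essentially a substitution in the formulas from Remark \ref{rem:intermsints}, there is no real obstacle: the only thing to check carefully is that case iii) of Theorem \ref{thm:maineqquasi} genuinely fails, which I already did above, and that the $s_1$, $s_2$ produced by case iv) match those in Table \ref{tab:params} rather than their complementary counterparts. Both verifications are routine, so the corollary follows immediately by combining Theorem \ref{thm:maineqquasi} with the Proposition above.
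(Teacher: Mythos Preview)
Your argument is correct and is exactly the intended one: the paper states this corollary without proof immediately after the Proposition, and the implicit reasoning is precisely the contrapositive you spell out, namely that $\Inc(\Omega)=d$ would force case iv) of Theorem \ref{thm:maineqquasi} (cases i)--iii) being excluded for the reasons you give), yielding a Family 1 quasi-symmetric design whose non-existence for $i\equiv 4\pmod 8$ is the content of the Proposition. Your care about which root of the quadratic arises is slightly more than is needed, since by the convention $|\Gamma_1(\gamma)|\leq|\Gamma_2(\gamma)|$ the design in Theorem \ref{thm:eqquasi} already has $k$ equal to the smaller root, which is the Family 1 value of $k$; but the complementation remark via Lemma \ref{lem:comp} does no harm.
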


Since $s_1\not\equiv s_2 \mod 2$ for the set of parameters that appear in Family $3$, we cannot use the results of Calderbank and Frankl in \cite{calder4} to eliminate 
any members from this family. In particular, we have not been able to eliminate any members of this family.

Finally, for the design in Problem \ref{prob:quasi2}, $s_1\equiv s_2 \mod 2$ if and only if $i\equiv 0 \mod 2$ if and only if $k\equiv 1\mod 2$. Therefore we cannot use the results of Calderbank and Frankl to eliminate any member from this family. Moreover
$$r-\lambda=i^2f(i),$$
for some polynomial $f(x)$, where $s_1-s_2=i$, so as above, the necessary conditions of Calderbank in \cite{calder1, calder2} and of Blokhuis and Calderbank in \cite{calder3} are satisfied. Indeed, the only member
of this family that has been proven not to exist occurs when $i=2$; Calderbank \cite[Theorem 13]{calder1} used certain properties of the $[24,12, 8]$ binary Golay code to prove this.

\begin{Rem}
We highlight one more family of quasi-symmetric designs that satisfy the conditions of Problem \ref{prob:quasi1}. For a positive integer $i$, let 
$$s_1=i^2,\,\,\,\, s_2=i(i-1)$$
so $$d(s_1,s_2)=4i^2+2i+1,\,\,\,\,k(s_1,s_2)=2i^2,\,\,\,\,\lambda(s_1,s_2)=i(2i^2-1).$$ 
The parameters of the corresponding set of equiangular lines are 
$$\rho=2i+1,\,\,\,\,|\Omega|=\rho^3+1,\,\,\,\,a=(\rho-1)(\rho^2+1)/2.$$
For this family one can use the results of Calderbank above to show that for certain values of $i$ the quasi-symmetric design does not exist, for example $i=2,3,6,7,10$. However,
for infinitely many values of $i$, there exists a set of equiangular lines with these parameters that saturate the relative bound (see \cite[Section 10]{seidelsurvey}), in particular when $i=(p^e-1)/2$ for some odd prime 
$p$ and integer $e$. These examples highlight
the fact that even if a set of equiangular lines that saturates the relative {\bf{and}} incoherence bounds does not exist, a set of lines 
which saturates only the relative bound with the same parameters may exist. With this in mind, 
the proof of Theorem \ref{thm:main} suggests it may be possible to find a set of equiangular lines in dimension $d=839$ that saturate the absolute bound. Of course this would invalidate Conjecture \ref{conj:abs}, so a resolution of this either way would be useful.
\end{Rem}

\begin{Prob} Does there exist a set of equiangular lines in $\R^{839}$ that saturates the absolute bound?
\end{Prob}


\appendix

\section{Tables}

\begin{table}\centering
\ra{1.3}
\begin{tabular}{@{}rrrrrrrrrrc@{}}\toprule
 \multicolumn{5}{c}{QS-Design} & \phantom{abc}& \multicolumn{3}{c}{EQ-lines}& \phantom{a}&\multicolumn{1}{c}{Existence}\\
\cmidrule{1-5} \cmidrule{7-9} \cmidrule{11-11} 
$d$ & $k$ & $\lambda$ & $s_1$ & $s_2$ && $|\Omega|$ & $\rho$ & $a$ && Yes/No/?\\ \midrule
6&3&2&2&1&&16&3&6&&Yes\\
7&2&1&1&0&&28&3&10&&Yes\\
20&10&18&6&4&&96&5&40&&No\\
21&8&14&4&2&&126&5&52&&No\\
23&7&21&3&1&&276&5&112&&Yes\\
42&21&60&12&9&&288&7&126&&?\\
43&18&51&9&6&&344&7&150&&No\\
72&36&140&20&16&&640&9&288&&?\\
73&32&124&16&12&&730&9&328&&?\\
110&55&270&30&25&&1200&11&550&&?\\
111&50&245&25&20&&1332&11&610&&?\\
115&45&330&20&15&&2300&11&1050&&?\\
118&43&602&18&13&&4720&11&2150&&?\\
156&78&462&42&36&&2016&13&936&&No\\
157&72&426&36&30&&2198&13&1020&&No\\
163&64&672&28&22&&4564&13&2112&&No\\
210&105&728&56&49&&3136&15&1470&&?\\
211&98&679&49&42&&3376&15&1582&&No\\
272&136&1080&72&64&&4608&17&2176&&?\\
273&128&1016&64&56&&4914&17&2320&&?\\
342&171&1530&90&81&&6480&19&3078&&?\\
343&162&1449&81&72&&6860&19&3258&&?\\
357&141&4935&60&51&&32130&19&15228&&?\\
420&210&2090&110&100&&8800&21&4200&&No\\
421&200&1990&100&90&&9262&21&4420&&No\\
\bottomrule
\end{tabular}
\caption{Parameters of (possible) quasi-symmetric designs and equiangular lines that saturate the incoherence bound for $1\leq s_1-s_2\leq 10$.}\label{tab:mset}
\end{table}

\begin{table}\centering
\ra{1.3}
\begin{tabular}{@{}lrrrrrrrrrrrc@{}}\toprule
&& \multicolumn{5}{c}{QS-Design} & \phantom{abc}& \multicolumn{3}{c}{EQ-lines}& \phantom{a}&\multicolumn{1}{c}{Existence}\\
\cmidrule{3-7} \cmidrule{9-11} \cmidrule{13-13} 
Family&& $d$ & $k$ & $\lambda$ & $s_1$ & $s_2$ && $|\Omega|$ & $\rho$ & $a$ && Yes/No/?\\ \midrule
$1$&$i=1$&23&7&21&3&1&&276&5&112&&Yes\\
&$2$ &118&43& 602 &18&13&&4720&11&2150&&? \\
&$3$ & 357 & 141 &4935 &60&51&&32130&19&15228&&?\\
&$4$ &  836& 346& 23874& 150& 136 && 140448& 29& 67816&&No\\
&5& 1675& 715& 85085& 315& 295 && 469000& 41& 228800&&? \\
&6&  3018& 1317& 247596& 588& 561 && 1303776& 55& 640062&&? \\
&7&  5033& 2233& 623007& 1008& 973 && 3170790& 71& 1563100&&? \\
&8&  7912& 3556& 1404620& 1620& 1576 && 6962560& 89& 3442208&&? \\
&9&  11871& 5391& 2905749& 2475& 2421 && 14102748& 109& 6986736&&? \\
&10&  17150& 7855& 5608470& 3630& 3565 && 26754000& 131& 13274950&&? \\ \midrule
2&$i=1$& 6&3&2&2&1 && 16&3&6&&Yes\\
&2& 20&10&18&6&4 && 96&5&40&&No\\
&3& 42&21&60&12&9 && 288&7&126&&?\\
&4& 72&36&140&20&16 && 640&9&288&&?\\
&5& 110&55&270&30&25 && 1200&11&550&&?\\
&6& 156&78&462&42&36 && 2016&13&936&&No\\
&7& 210&105&728&56&49 && 3136&15&1470&&?\\
&8& 272&136&1080&72&64 && 4608&17&2176&&?\\
&9& 342&171&1530&90&81 && 6480&19&3078&&?\\
&10& 420&210&2090&110&100 && 8800&21&4200&&No\\ \midrule
$3$&$i=1$&7&2&1&1&0&&28&3&10&&Yes\\
&2&115&45&330&20&15&&2300&11&1050&&?\\
&3&517&220&4015&99&88&&22748&23&10890&&?\\
&4&1501&665&22078&304&285&&114076&39&55594&&?\\
&5&3451&1566&81693&725&696&&400316&59&196794&&?\\
&6&6847&3157&237226&1476&1435&&1122908&83&554730&&?\\
&7&12265&5720&584155&2695&2640&&2698300&111&1337050&&?\\
&8&20377&9585&1275870&4544&4473&&5787068&143&2873370&&?\\
&9&31951&15130&2543353&7209&7120&&11374556&179&5655594&&?\\
&10&47851&22781&4717738&10900&10791&&20863036&219&10383994&&?\\
\bottomrule
\end{tabular}
\caption{Parameters of (possible) quasi-symmetric designs and equiangular lines of first $10$ members of Families in Table \ref{tab:params}.}\label{tab:iset}
\end{table}

\end{document}